\newtheorem{theorem}{Theorem}[section]
\newtheorem{corollary}[theorem]{Corollary}
\newtheorem{lemma}[theorem]{Lemma}
\newtheorem{proposition}[theorem]{Proposition}
\theoremstyle{definition}
\newtheorem{remark}[theorem]{Remark}
\newtheorem{ex}[theorem]{Example}
\numberwithin{equation}{section}
\newcommand{\p}{\begin{pmatrix}}
\newcommand{\pp}{\end{pmatrix}}
\newcommand{\din}{\mathrm{in}}
\newcommand{\dout}{\mathrm{out}}
\title[Number and Stability of Relaxation Oscillations]%
{Number and Stability of Relaxation Oscillations
\\
for Predator-Prey Systems
\\
with Small Death Rates
}
\author[Ting-Hao Hsu]{}
\thanks{%
$^\dag$Research supported by
Natural Sciences and Engineering Council (NSERC)
of Canada  Discovery Grant Accelerator Supplement,
awarded to Gail~S.~K.~Wolkowicz.
}
\email{hsut1@math.mcmaster.ca}
\renewcommand{\[}{\begin{equation}\notag\begin{aligned}}
\renewcommand{\]}{\end{aligned}\end{equation}}
\newcommand{\beq}[1]{\begin{equation}\label{#1}\begin{aligned}}
\subjclass[2010]{34C26, 92D25}
\keywords{%
predator-prey models,
relaxation oscillations,
limit cycles,
paradox of enrichment,
entry-exit relation,
delay of stability loss
}
\begin{document}

\maketitle

\centerline{\scshape Ting-Hao Hsu$^\dag$}
\medskip
{\footnotesize
\centerline{Department of Mathematics and Statistics}
\centerline{McMaster University}
\centerline{Hamilton, Ontario, L8S 4K1, Canada}
}

\begin{abstract}
We consider planar systems of predator-prey models
with small predator death rate $\epsilon>0$.
Using geometric singular perturbation theory
and Floquet theory,
we derive characteristic functions
that determines the location and the stability of
relaxation oscillations as $\epsilon\to 0$.
When the prey-isocline has a single interior local extremum,
we prove that the system has a unique nontrivial periodic orbit,
which forms a relaxation oscillation.
For some systems
with prey-isocline possessing two interior local extrema,
we show that
either the positive equilibrium is globally stable,
or the system has exact two periodic orbits.
In particular,
for a predator-prey model with the Holling type IV functional response
we derive a threshold value of the carrying capacity
that separates these two outcomes.
This result supports the so-called paradox of enrichment.
\end{abstract}

\section{Introduction}
\label{sec_intro}

\begin{figure}[t]
\centering
\begin{tabular}{cc}
\frame
{\includegraphics[trim = 2.3cm 1.2cm 1cm .2cm, clip, width=.44\textwidth]{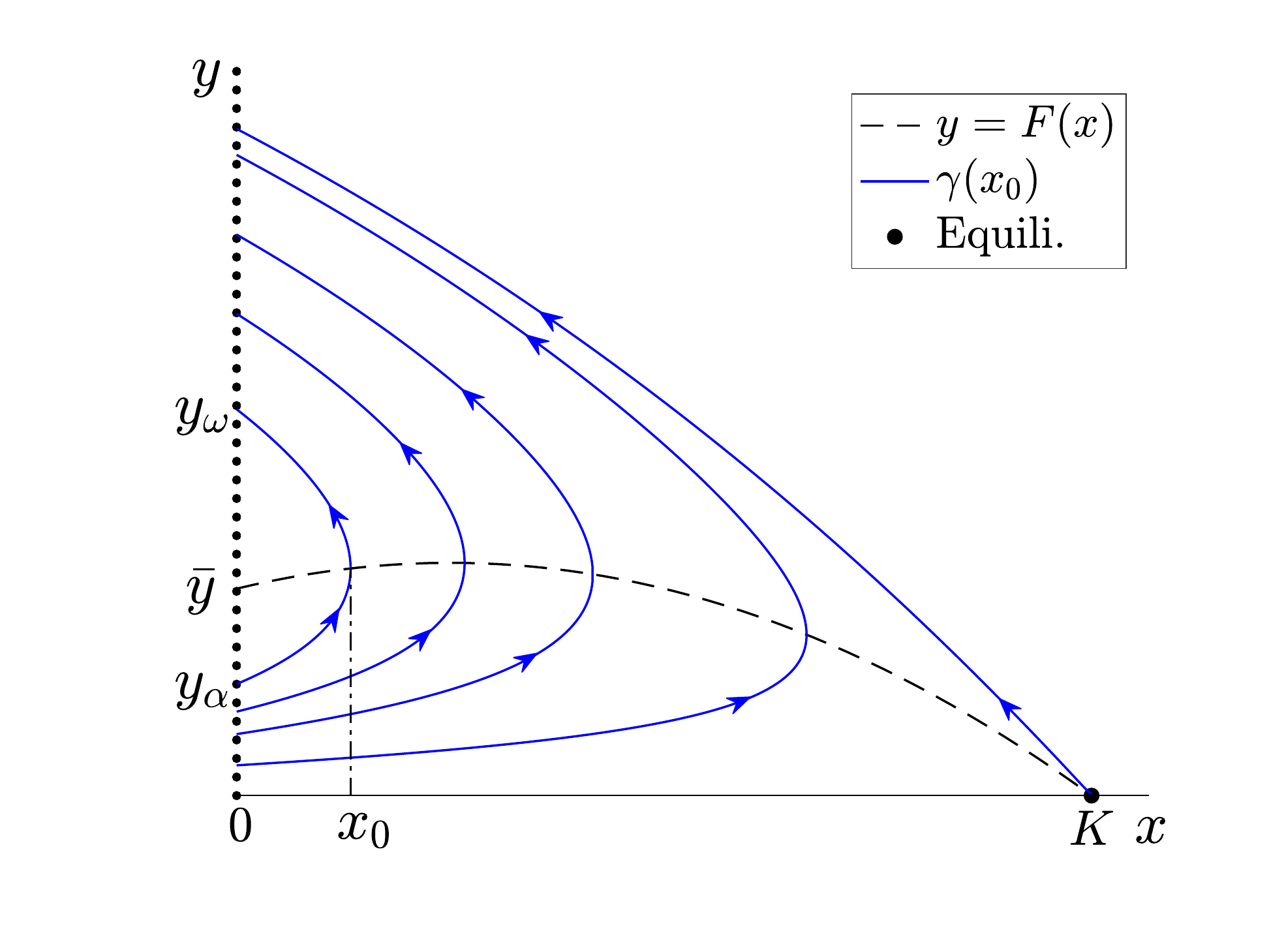}}
&
\frame
{\includegraphics[trim = 2.3cm 1.2cm 1cm .2cm, clip, width=.44\textwidth]{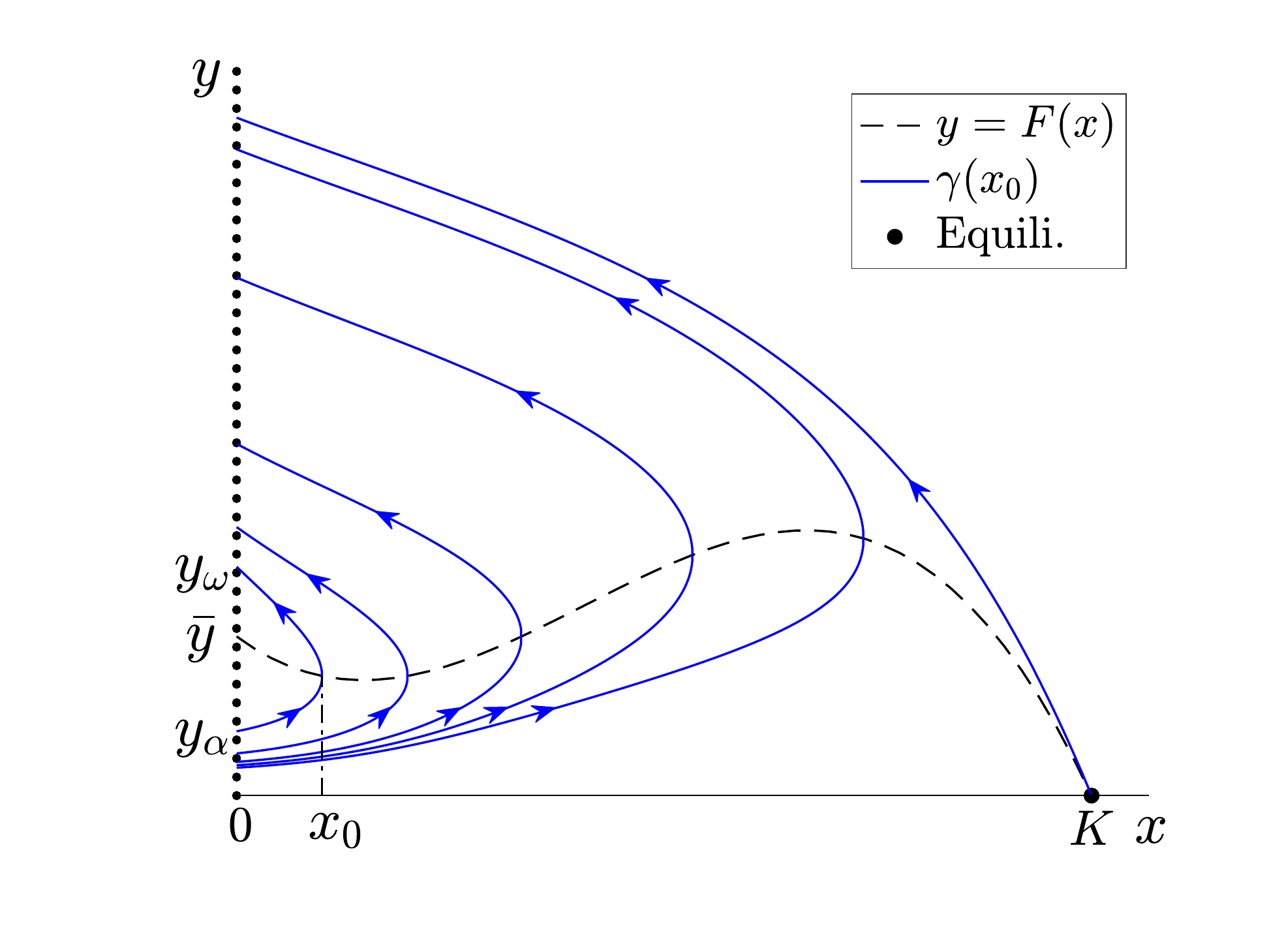}}
\\
(A)
&
(B)
\end{tabular}
\caption{
Typical phase portraits for \eqref{fast_xy} with
(A) $p(x)=mx/(a+x)$
and (B) $p(x)=mx/(x^2+a)$, $a>0$.
The $y$-axis is a set of critical points.
A family of heteroclinic orbits 
is bounded
by the $x$- and $y$-axis
and a trajectory with alpha-limit set being the point $(0,K)$.
For each $x_0\in (0,K)$,
the trajectory $\gamma(x_0)$
passing through
the prey-isocline at
$(x_0,F(x_0))$
has the alpha- and omega-limit sets
being the points $(0,y_\alpha(x_0))$ and $(0,y_\omega(x_0))$, respectively.
}
\label{fig_Gamma}
\end{figure}

The classical Gause-type predator-prey system
with logistic growth of the prey
takes the form
\beq{deq_xy}
  &\dot{x}= rx\left(1-\frac{x}{K}\right)- yp(x)\equiv p(x)(F(x)-y)\\
  &\dot{y}= y(-\epsilon+ cp(x))
\]
where $x(t)$ and $y(t)$ denote densities of the prey and predator populations at time $t$, respectively.
Parameters in the system are:
intrinsic growth rate of the prey $r$,
carrying capacity of the environment $K$,
yield rate $c$,
and death rate of the predator $\epsilon$.
The functional response $p(x)$,
which describes the change in the density of the prey attacked per unit time per predator,
is continuously differentiable and satisfies
\beq{cond_p}
  p(0)=0,\;\;
  p'(0)>0,\;\;
  \text{and}\;\;
  p(x)>0\;\;\forall\;x>0.
\]
Predator-prey systems that possess limit cycles
can be used to explain oscillatory phenomena
in real-world data,
such as the lynx-snowshoe hare cycles \cite{Albrecht:1973,May:1972}.

Limit cycles 
of predator-prey systems
have been studied extensively in the literature.
For certain classes of systems,
the uniqueness of limit cycles has been proved by
Cheng \cite{Cheng:1981},
Kuang and Freedman \cite{Kuang:1988},
Hsu and Huang \cite{Hsu:1995},
Kooij and Zegeling \cite{Kooij:1996},
Sugie \cite{Sugie:1998},
and Xiao and Zhang \cite{Xiao:2003}.
Bifurcation analysis for predator-prey systems
has been investigated
by many researcher, e.g.\ 
Wolkowicz \cite{Wolkowicz:1988},
Zhu, Campbell and Wolkowicz \cite{Zhu:2002},
and Ruan and Xiao \cite{Ruan:2001}.

Singular perturbations in predator-prey systems
have been studied in various contexts.
For a model of two predators competing for the same prey,
when the prey population grows much faster than the predator populations,
the existence of a relaxation oscillation
has been proved by Liu, Xiao and Yi
\cite{Liu:2003}.
For predator-prey systems
with Holling type III or IV,
when the death and the yield rates of the predator
are small and proportional to each other,
the canard phenomenon
and the cyclicity of limit periodic sets
have been investigated by 
Li and Zhu \cite{Zhu:2013}.
For a class of
the Holling-Tanner model,
when the intrinsic growth rate of the predator is sufficiently small,
the existence of a relaxation oscillation
has been proved by
Ghazaryan, Manukian and Schecter
\cite{Ghazaryan:2015}.
For a model of one predator and two prey
with rapid predator evolution,
singular periodic orbits
that correspond to approximated periodic orbits 
have been constructed by Piltz et~al.\ \cite{Piltz:2017}.
Another relevant work was done by Li et~al. \cite{Li:2016},
in which 
the existence of
relaxation oscillations has been proved for some epidemic models,
but the uniqueness of limit cycles was not provided.

In this paper we study the dynamics of \eqref{deq_xy} as $\epsilon\to 0$.
When $\epsilon=0$,
the system is reduced to \beq{fast_xy}
  \dot{x}
  = p(x)(F(x)-y),
  \quad
  \dot{y}= cyp(x),
\]
where $F(x)=rx(1-x/K)/p(x)$.
As indicated by Li and Zhu \cite{Zhu:2013},
system \eqref{fast_xy} has a family of heteroclinic orbits as described below.
The $x$-isocline
for \eqref{fast_xy}
consists of the $y$-axis
and the curve $y=F(x)$.
By condition \eqref{cond_p},
the limit of $F(x)$
as $x\to 0$ exists and equals $r/p'(0)>0$.
We define \[
\bar{y}=F(0)=\lim_{x\to 0}F(x)>0.
\]
Also note that $F(x)>0$ for all $x\in [0,K)$ and $F(K)=0$.
It is easy to show that there is a unique trajectory
that has the point $(K,0)$ as its alpha-limit set.
Bounded by that trajectory
and the $x$- and $y$-axes
is a family of heteroclinic orbits connecting points on the $y$-axis
(see Figure \ref{fig_Gamma}).

For each $x_0\in (0,K)$,
let $\gamma(x_0)$
be the trajectory of \eqref{fast_xy}
passing through the point $(x_0,F(x_0))$.
Note that $\gamma(x_0)$
approaches the $y$-axis
in both positive and negative time. 
Define $y_\alpha(x_0)$ and $y_\omega(x_0)$
to be the values such that 
the points $(0,y_\alpha(x_0))$
and $(0,y_\omega(x_0))$
are the alpha-
and omega-limit points,
respectively, of ($x_0,F(x_0))$
for \eqref{fast_xy}.
Clearly $y_\alpha(x_0)< \bar{y}< y_\omega(x_0)$
for all $x\in (0,K)$,
where $\bar{y}=F(0)$.

Near the invariant set $\{(x,y): x=0\}$ for \eqref{deq_xy},
the system is mainly governed by
\begin{equation}\label{slow_xy}
  x=0,\quad
  \dot{y}= -\epsilon y.
\end{equation}
Let $\sigma(x_0)$ be the segment of the orbit of \eqref{slow_xy}
going from $(0,y_\omega(x_0))$ to $(0,y_\alpha(x_0))$.

The idea of {\em geometric singular perturbation theory}
\cite{Fenichel:1979,Jones:1995,Kuehn:2016}
is that solutions of the full system
can potentially be obtained
by joining trajectories of the limiting systems.
The limiting systems \eqref{fast_xy} and \eqref{slow_xy}
provide a family of uncountably many loops
following the route
$(0,y_\alpha)\overset{\gamma}{\longrightarrow}
(0,y_\omega)\overset{\sigma}{\longrightarrow}
(0,y_\alpha)$
defined by
\begin{equation}\label{def_gamma12}
  \Gamma(x_0)=\gamma(x_0)\cup \sigma(x_0)
\end{equation}
that depends continuously on $x_0\in (0,K)$.
Each $\Gamma(x_0)$ is a candidate of
the limiting configuration of periodic orbits of \eqref{deq_xy} as $\epsilon\to 0$.

Among this family of
candidates $\Gamma(x_0)$,
using a variation of the phenomenon of bifurcation delay,
which we will describe in Section \ref{sec_criteria},
generically all but finitely many of the candidates can be  excluded:
A necessary condition for
$\Gamma(x_0)$ 
to admit a relaxation oscillation,
i.e.\ for $\Gamma(x_0)$
to be the limit of periodic orbits of \eqref{deq_xy} as $\epsilon\to 0$,
is \beq{chi_zero}
  \chi(x_0)=0,
\]
where \beq{def_chi}
  \chi(x_0)
  = \int_{y_\alpha(x_0)}^{y_\omega(x_0)}
  \frac{y-\bar{y}}{y}\;dy.
\] 
Furthermore,
using a variation of the Folquet theory,
we show that if, additionally, \beq{lambda_zero}
  \lambda(x_0)\ne 0,
\] where  \beq{def_lambda}
  \lambda(x_0)
  = \int_{y_\alpha(x_0)}^{y_\omega(x_0)}
  \frac{F'(X(y,x_0))}{y}\;dy
\] and $X(y,x_0)$ is the parametrization of $\gamma(x_0)$,
then
\eqref{chi_zero} is also a sufficient condition.

\begin{figure}[t]
\begin{center}
\begin{tabular}{cc}
\frame
{\includegraphics[trim = 1.2cm .5cm 1.2cm .5cm, clip, width=.46\textwidth]{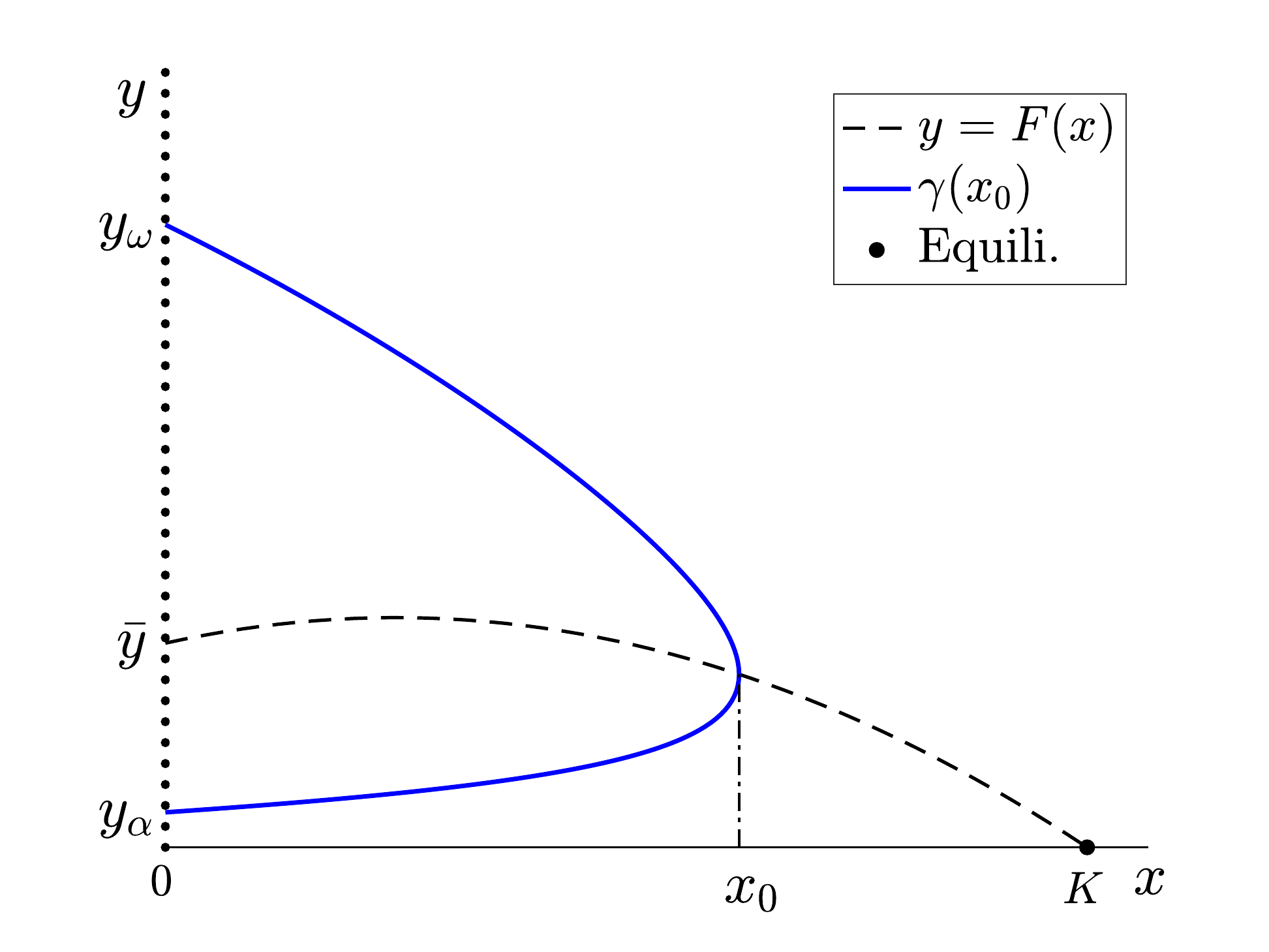}}
&
\frame
{\includegraphics[trim = 1.2cm .5cm 1.2cm .5cm, clip, width=.46\textwidth]{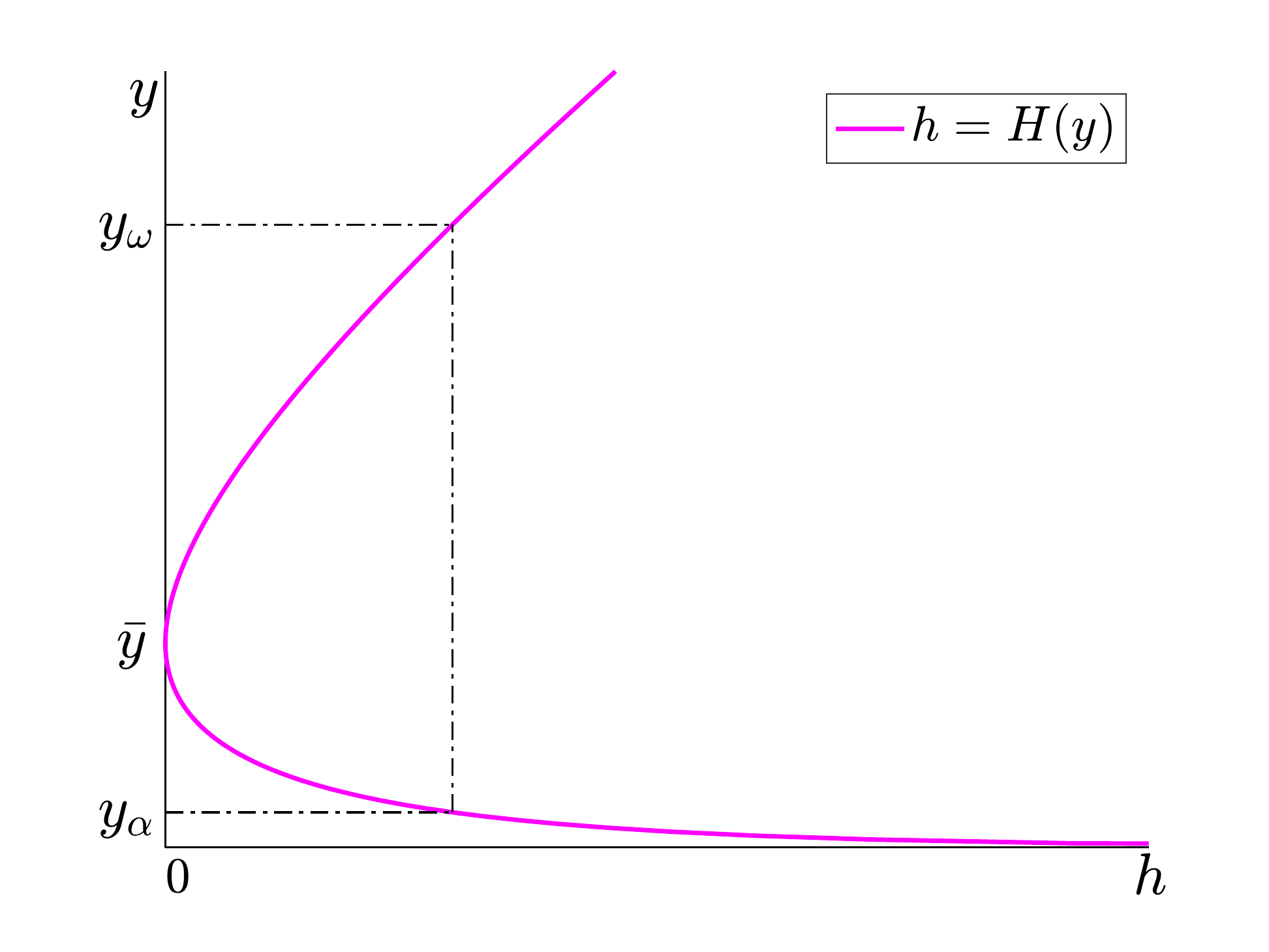}}
\end{tabular}
\end{center}
\caption{
The equation $\chi(x_0)=0$ means that
$H(y_\omega(x_0))= H(y_\alpha(x_0))$.
}
\label{fig_H2_HLog}
\end{figure}

In the case of the Holling type II functional response $p(x)=mx/(a+x)$ for \eqref{deq_xy},
the existence of a periodic orbit $\ell_\epsilon$
that is attracting
is known \cite{Cheng:1981,Kuang:1988nonuniqueness},
and Hsu and Shi \cite{Hsu:2009} proved that
the minimal period $T_\epsilon$ of $\ell_\epsilon$
satisfies $C_1\epsilon^{-1}< T_\epsilon< C_2\epsilon^{-1}$ for some $0<C_1<C_2$,
which implies that $\ell_\epsilon$ forms a relaxation oscillation.
Our main theorems complete their results.
We show that $\epsilon T_\epsilon$ actually converges,
and the trajectory of $\ell_\epsilon$ approaches a certain configuration.
Our results
also cover models
with $p(x)=m(1-e^{-ax})$ and $p(x)=m\log(1+ax)$.

In the case of the Holling type IV functional response $p(x)=mx/(ax^2+1)$ for \eqref{deq_xy},
the function $F(x)=rx(1-x/K)/p(x)$ can have two interior local extrema.
When $\epsilon$ is smaller than and close to the interior local minimal point of $F(x)$,
Xiao~and~Zhu \cite{Xiao:2006} proved
that, under certain conditions,
the system has exactly two limit cycles
for parameters
near a subcritical Hopf bifurcation point.
Our theorem is complementary to their results.
Assuming
the value of $aK^2$ is large enough,
and $c>0$ is small enough,
we show that
system \eqref{deq_xy} has exactly two limit cycles
when $\epsilon$ is sufficiently small.
Multiple limit cycles
were also observed
for various functional responses
by other authors \cite{Hofbauer:1990,Huang:2014,Kuang:1988,Wrzosek:1990,Zhu:2002}.

This result supports the {\em paradox of enrichment}
proposed by Rosenzweig \cite{Rosenzweig:1971}
and studied by Freedman and Wolkowicz \cite{Freedman:1986}.
This paradox says that
enrichment of the environment
(i.e.\ increasing the carrying capacity $K$)
may lead to destabilization of the coexistence equilibrium.
When $K$ is below a threshold $K_*$,
our theorem shows that,
for any fixed small $c>0$
and all sufficiently small $\epsilon$,
the coexistence equilibrium attracts all trajectories
in the interior of the first quadrant.
Hence the prey population eventually remains
of order $\epsilon$,
and does not become exponentially small.
On the contrary,
when $K$ is above $K_*$,
there are two limit cycles,
and all trajectories outside the inner limit cycle
are attracted by the outer limit cycle.
The prey population
remains a small size of order $\exp(-1/\epsilon)$
on the outer limit cycle
for a long timespan of order $1/\epsilon$.
Hence the prey is vulnerable to catastrophic perturbations.

In Section \ref{sec_criteria} 
we state our main criteria
for the location and stability of relaxation oscillations,
and the proof is given in Section \ref{sec_proof}.
Applications of the criteria
in the cases where
the prey-isocline has
one and two interior local extrema are discussed
in Sections \ref{sec_1hump} and \ref{sec_2hump}, respectively.
A discussion of our results is in Section \ref{sec_discussion}.

\begin{remark}
The growth function in \eqref{deq_xy},
instead of being the logistic growth $rx(1-x/K)$,
can be replace by any function $q(x)$
that satisfies $q(0)=0$, $q'(0)=r>0$ and $(K-x)q(K)>0$ for $x\ne K$.
Our analysis is still valid with $F(x)=q(x)/p(x)$.
\end{remark}

\begin{remark}
The function $\chi(x)$
defined in \eqref{def_chi} 
can be written as \[
  \chi(x)=H(y_\omega(x))-H(y_\alpha(x)), 
\] where \beq{def_H}
  H(y)=y-\bar{y}- \bar{y}\log(y/\bar{y}),
\]
which takes the form as a part of the Lyapunov function
introduced in \cite{Hsu:1978global}.

Another way to rewrite $\chi(x)$ is \beq{chi_int2}
  \chi(x_0)
  = \int_{y_\alpha(x_0)}^{y_\omega(x_0)}\frac{F(X(y,x_0)))-\bar{y}}{y}\;dy,
\]
where $X(y,x_0)$ is the parametrization of $\gamma(x_0)$.
This expression can be derived from \eqref{def_chi}
using \[
  \int_{y_\alpha(x_0)}^{y_\omega(x_0)}\frac{F(X(y))-y}{y}\;dy
  =\int_{-\infty}^{\infty} c\,\frac{\dot{x}(t)}{\dot{y}(t)}\;\dot{y}(t)dt
  =cx(t)\big|_{t=-\infty}^{\infty}
  =0.
\]
Note the integrand in expression \eqref{def_lambda} of $\lambda(x_0)$
is formally the derivative,
with respect to $X$,
of the integrand in \eqref{chi_int2} of $\chi(x_0)$.
Thus $\lambda(x_0)$ can be interpreted
as a type of derivative of $\chi(x_0)$.
\end{remark}

\begin{remark}
Our criteria
of the existence of periodic orbits
can be related
to the finiteness part of Hilbert's 16th problem
\cite{Dumortier:1994,Roussarie:1998},
which conjectures the boundedness of the number of limit cycles
of polynomial vector fields
of a fixed order,
in the following way.
Consider the functional response $p(x)$ in \eqref{deq_xy}
as a rational function where the
numerator and denominator are polynomials of fixed orders.
Then \eqref{deq_xy} is equivalent to a polynomial vector field of a fixed order.
If one is able to vary
the coefficients of $p(x)$
to obtain arbitrarily many non-degenerate roots of $\chi(x)$ in the interval $(0,K)$
while keeping the
orders of the numerator and denominator of $p(x)$ fixed,
then one obtains a negative answer to the conjecture.
\end{remark}

\section{The Criteria}
\label{sec_criteria}

The phenomenon of 
{\em bifurcation delay}
(see \cite{De-Maesschalck:2008,De-Maesschalck:2016,THHsu:2017}
and the references therein),
also known as {\em Pontryagin delay},
or {\em delay of stability loss},
occurs typically in systems of the form
 \beq{sf_ab}
  \dot{a}= \epsilon f(a,b,\epsilon),\quad \dot{b}=b\, g(a,b,\epsilon),
\]
where $f$ and $g$ are $C^1$ functions
that satisfy \beq{cond_turning_ab}
  f(a,0,0)> 0\quad\forall\; a\in\mathbb R
  \qquad\text{and}\qquad
  a\,g(a,0,0)>0\quad\forall\; a\ne 0.
\]
When $\epsilon=0$, the limiting system is \beq{fast_ab}
  \dot{a}= 0,\quad \dot{b}=b\, g(a,b,0).
\]
As illustrated in Figure \ref{fig_bifdelay}{(A)},
the half line $\{a<0,b=0\}$ is a set of attracting critical points for \eqref{fast_ab},
and $\{a>0,b=0\}$ is a set of repelling critical points.
Fix an $a_0<0$ and $\delta>0$.
For each $\epsilon>0$,
let $\Gamma_\epsilon(a_0)$ be a trajectory of \eqref{sf_ab}
that starts from the point $(a_0,\delta)$.
Denote by $(a_{1,\epsilon},\delta)$
the point where $\Gamma_\epsilon$
intersects the cross section $\{b=\delta\}$.
Then it can be proved that \beq{limit_a1eps}
  a_{1,\epsilon}\to a_1
  \quad\text{as }\epsilon\to 0,
\]
where $a_1$ is implicitly defined by \beq{entryexit_ab}
  \int_{a_0}^{a_1} \frac{g(a,0,0)}{f(a,0,0)}\; da= 0,
\]
or $a_1=\infty$ if \eqref{entryexit_ab} does not hold for any finite $a_1>0$.
The relation between $a_0$ and $a_1$
is called the {\em entry-exit relation}.

To deal with \eqref{deq_xy},
a tempting approach is to try to find a change of coordinates near the $y$-axis
that converts the system into \eqref{sf_ab}.
However, such a transformation is not possible
because the tangent lines of the trajectories of \eqref{fast_xy}
approach the $y$-axis near $y=\bar{y}$
(see Figure \ref{fig_bifdelay}{(B)}).
Therefore a variation of bifurcation delay is needed.
We will show in Theorem \ref{thm_entryexit2} that
the assertion \eqref{limit_a1eps} still holds
for systems of the form
\beq{sf_ab2}
  \dot{a}= \epsilon f(a,b,\epsilon)+ b\, h(a,b,\epsilon),
  \quad
  \dot{b}= b\,g(a,b,\epsilon),
\]
where $f$, $g$ and $h$ are $C^2$ functions that satisfy \eqref{cond_turning_ab}.

Throughout this paper
we assume
the functional response $p(x)$
is a $C^2$ function.
Then the entry-exit relation \eqref{entryexit_ab} for \eqref{deq_xy},
with $(y,x)$ playing the role of $(a,b)$ in \eqref{sf_ab2},
can be stated as follows:
Trajectories of \eqref{deq_xy} entering the vicinity of the $y$-axis near $(0,y_\omega)$
must leave near the point $(0,y_0)$
satisfying $H(y_\omega)=H(y_0)$,
where $H(y)$ is defined in \eqref{def_H}.

Here we give a heuristic argument
showing that
if $\chi(x_0)\ne 0$, $x_0\in (0,K)$,
then no trajectory of \eqref{deq_xy}
lies entirely near $\Gamma(x_0)$.
For any $x_0\in (0,K)$,
if $\chi(x_0)>0$ (resp.\ $\chi(x_0)<0$),
then $H(y_\omega(x_0))>H(y_\alpha(x_0))$ (resp.\ $H(y_\omega(x_0))<H(y_\alpha(x_0))$).
Because $H(y)$ is monotone on the interval $(0,\bar{y})$
and $\lim_{y\to 0^+}H(y)=\infty$,
$H(y_\omega(x_0))=H(y_0)$
for a unique $y_0\in (0,y_\alpha(x_0))$  (resp.\ $y_0\in (y_\alpha(x_0),\bar{y})$).
Since the point $(0,y_0)$ is not contained in $\gamma(x_0)$,
a trajectory of \eqref{deq_xy} starting near $\gamma(x_0)$
does not stay near $\Gamma(x_0)=\gamma(x_0)\cup \sigma(x_0)$
after the first time it leaves the vicinity of the $y$-axis.

\begin{figure}[t]
\begin{center}
\begin{tabular}{cc}
\frame
{\includegraphics[trim = 2.7cm .9cm 1cm .7cm, clip, width=.46\textwidth]{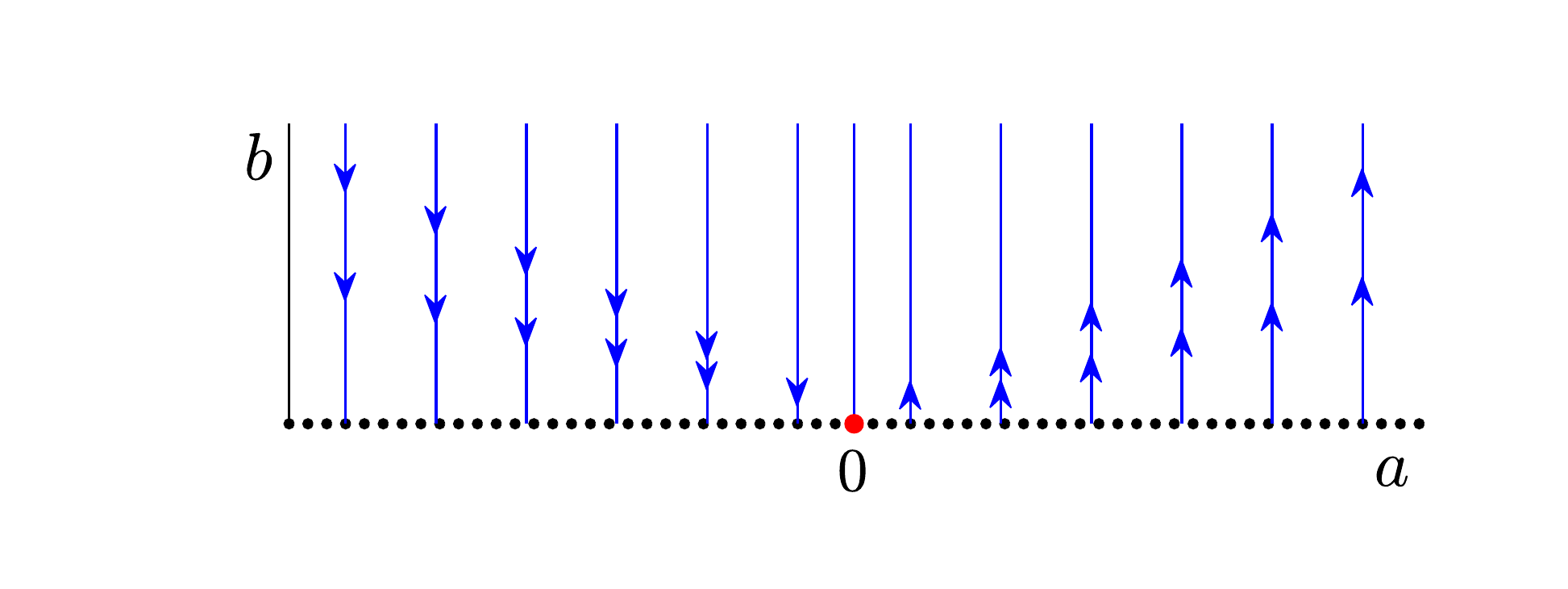}}
&
\frame
{\includegraphics[trim = 2.7cm .9cm 1cm .7cm, clip, width=.46\textwidth]{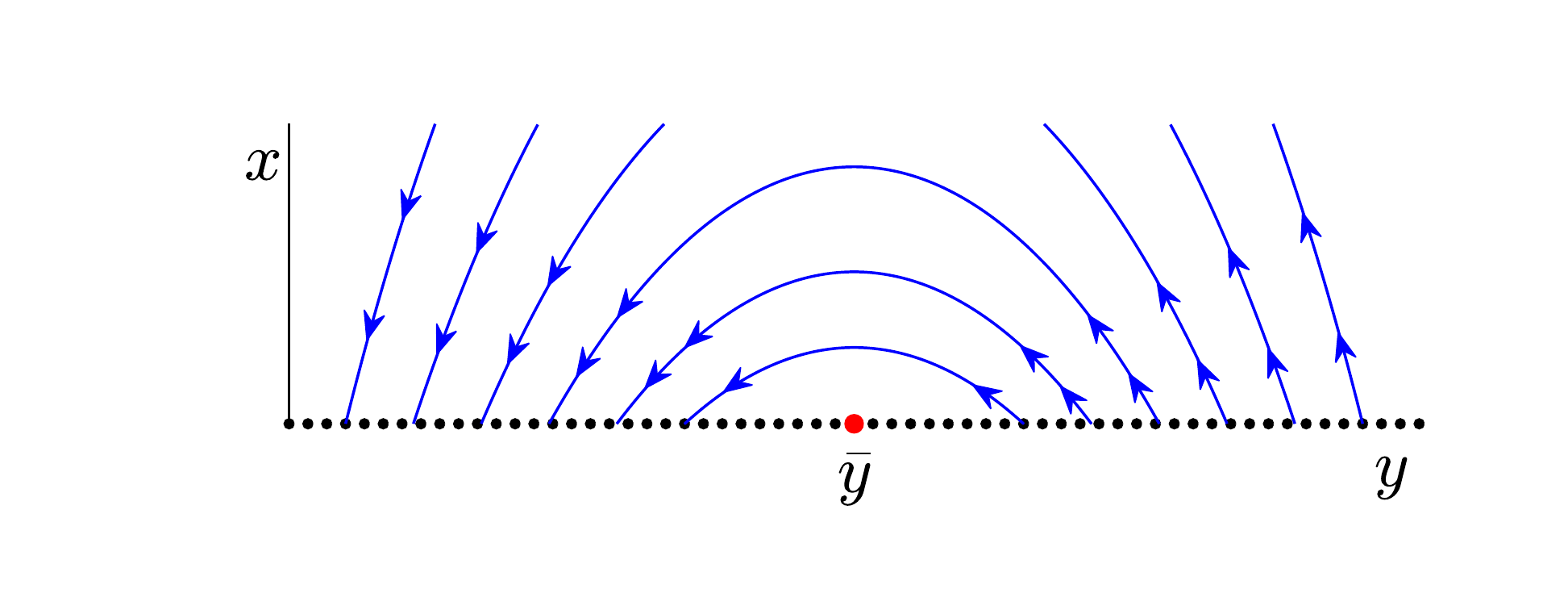}}
\\
(A)
&
(B)
\end{tabular}
\end{center}
\caption{
Typical phase portraits for
(A) the limiting system \eqref{fast_ab} of \eqref{sf_ab},
(B) the limiting system \eqref{fast_xy} of \eqref{deq_xy}
near the $y$-axis.
The tangent lines of the trajectories in (B) approach the $y$-axis near $y=\bar{y}$,
so \eqref{deq_xy} and \eqref{sf_ab} are not equivalent near the $y$-axis.
}
\label{fig_bifdelay}
\end{figure}

\begin{theorem}\label{thm_y0}
Assume that $x_0\in (0,K)$ satisfies $\chi(x_0)=0$,
where $\chi(x)$ is defined in \eqref{def_chi}.
Let $\lambda(x)$ be the function defined in \eqref{def_lambda}.
If $\lambda(x_0)\ne 0$,
then for any sufficiently small $\epsilon>0$,
there is a periodic orbit ${\ell}_\epsilon$ of \eqref{deq_xy}
in a $O(\epsilon)$-neighborhood of $\Gamma(x_0)=\gamma(x_0)\cup \sigma(x_0)$
defined in \eqref{def_gamma12}.
Moreover, 
${\ell}_\epsilon$ is orbitally locally asymptotically stable if $\lambda(x_0)<0$,
and is orbitally unstable if $\lambda(x_0)>0$.
The minimal period of $\ell_\epsilon$, denoted by $T_\epsilon$, satisfies \beq{est_Teps}
  T_\epsilon= \frac{1}{\epsilon}\Big(
    \log\left(\frac{y_\omega(x_0)}{y_\alpha(x_0)}\right)+o(1)
  \Big)
  \quad\text{as }\epsilon\to 0.
\]
Conversely, 
if $\chi(x_0)\ne 0$,
then for any point $z_1$ in the interior of the trajectory $\gamma(x_0)$,
there is a neighborhood $U$ of $z_1$
such that no periodic orbit of \eqref{deq_xy}
intersects $U$ for any sufficiently small $\epsilon>0$.
\end{theorem}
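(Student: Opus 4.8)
The plan is to reduce Theorem~\ref{thm_y0} to the study of a one-dimensional return map obtained by concatenating the regular flow of \eqref{fast_xy} with the slow passage along the $y$-axis. Fix a short cross-section $\Sigma$ transverse to $\gamma(x_0)$ at an interior point, parametrized by the prey coordinate $x$ on a small interval $J\ni x_0$ (so $p$ is bounded below on $\Sigma$). For small $\epsilon>0$ I would follow the forward orbit of a point of $\Sigma$ with coordinate $x$ in three stages: (i) while it stays in $\{x\ge x^\ast\}$ for a fixed small $x^\ast>0$, the $\epsilon$-terms are a regular perturbation, so by Fenichel theory the orbit stays $O(\epsilon)$-close to $\gamma(x)$ and reaches a small neighborhood of $(0,y_\omega(x))$; (ii) near the $y$-axis, with $(y,x)$ in the roles of $(a,b)$, system \eqref{deq_xy} has the form \eqref{sf_ab2} with \eqref{cond_turning_ab} satisfied ($g(y,0,0)=-y$ and $f(y,0,0)>0$ because $p'(0)>0$), so Theorem~\ref{thm_entryexit2} applies and the orbit leaves near $(0,y_{\mathrm{exit}})$ with $H(y_\omega(x))=H(y_{\mathrm{exit}})$, $y_{\mathrm{exit}}<\bar y$, spending there time $\epsilon^{-1}(\log(y_\omega(x)/y_{\mathrm{exit}})+o(1))$; (iii) it then follows $\gamma(\tilde x)$, $y_\alpha(\tilde x)=y_{\mathrm{exit}}$, back to $\Sigma$. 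This defines the return map $P_\epsilon\colon x\mapsto\tilde x$, which is continuous in $x$ and converges as $\epsilon\to0$ to the singular return map $P_0$ determined by $H(y_\omega(x))=H(y_\alpha(P_0(x)))$. Since $\chi(x)=H(y_\omega(x))-H(y_\alpha(x))$ and $H$ is strictly decreasing on $(0,\bar y)$, fixed points of $P_0$ in $(0,K)$ are exactly the zeros of $\chi$, with $\mathrm{sign}(P_0(x)-x)=\mathrm{sign}\,\chi(x)$; and because $x\mapsto y_\omega(x)$ is increasing, $x\mapsto y_\alpha(x)$ is decreasing and the $H$-level map is decreasing, $P_\epsilon$ and $P_0$ are increasing homeomorphisms. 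In particular every periodic orbit of \eqref{deq_xy} making a fast excursion has a constant itinerary of excursion parameters, hence corresponds to a genuine fixed point of $P_\epsilon$.

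I would prove the converse first. If $\chi(x_0)\ne0$, then $P_0-\mathrm{id}$ is bounded away from $0$ near $x_0$, so the same holds for $P_\epsilon-\mathrm{id}$ for all small $\epsilon$, whence $P_\epsilon$ has no fixed point near $x_0$. If a periodic orbit met a sufficiently small neighborhood $U$ of an interior point $z_1$ of $\gamma(x_0)$, it would have to track $\gamma(x_0)$ through the boundary layer (it cannot remain in $\{x\ge x^\ast\}$, since there $\dot y>0$), and would therefore be a fixed point of $P_\epsilon$ with an excursion parameter near $x_0$ --- impossible. This makes rigorous the heuristic of Section~\ref{sec_criteria}.

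For the direct statement, $\chi(x_0)=0$ gives $P_\epsilon(x_0)\to x_0$. The rest rests on a Floquet-type identity. With $V$ the vector field of \eqref{deq_xy}, one has $\mathrm{div}\,V=\frac{d}{dt}\log p(x(t))+p(x)F'(x)+\frac{d}{dt}\log y(t)$, so over a closed orbit the two exact-derivative terms integrate to $0$ and, substituting $cp(x)=\dot y/y+\epsilon$, $\int_0^{T_\epsilon} p(x)F'(x)\,dt=\frac1c\oint\frac{F'(x)}{y}\,dy+\frac\epsilon c\int_0^{T_\epsilon} F'(x(t))\,dt$. As $\epsilon\to0$ the orbit tends to $\Gamma(x_0)=\gamma(x_0)\cup\sigma(x_0)$, so $\oint\frac{F'(x)}{y}\,dy\to\lambda(x_0)-F'(0)\log(y_\omega(x_0)/y_\alpha(x_0))$ (the $\gamma(x_0)$-part being $\lambda(x_0)$ by \eqref{def_lambda}), while $\frac\epsilon c\int F'(x(t))\,dt\to\frac{F'(0)}{c}\log(y_\omega(x_0)/y_\alpha(x_0))$ by the period estimate \eqref{est_Teps}; the $F'(0)$-terms cancel and $\int_0^{T_\epsilon}\mathrm{div}\,V\,dt\to\lambda(x_0)/c$. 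The same computation applied to the non-closed transit arcs (where the extra geometric factor in the transit-map derivative tends to $1$ because the entry and exit points on $\Sigma$ coalesce) gives $P_\epsilon'(x)\to e^{\lambda(x_0)/c}$ as $(\epsilon,x)\to(0,x_0)$. Since $\lambda(x_0)\ne0$ this limit differs from $1$, so the graph of $P_\epsilon$ crosses the diagonal transversally at a unique nearby point $x_\epsilon\to x_0$; the associated periodic orbit $\ell_\epsilon$ lies in an $O(\epsilon)$-neighborhood of $\Gamma(x_0)$ by the stage estimates, has minimal period $T_\epsilon=\epsilon^{-1}(\log(y_\omega(x_0)/y_\alpha(x_0))+o(1))$ from the time-count of stages~(i)--(iii), and has nontrivial Floquet multiplier $P_\epsilon'(x_\epsilon)\to e^{\lambda(x_0)/c}$, which is $<1$ when $\lambda(x_0)<0$ and $>1$ when $\lambda(x_0)>0$; this yields the claimed orbital stability and instability, and hyperbolicity makes $\ell_\epsilon$ isolated.

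The main obstacle I anticipate is the rigorous assembly of the return map across the boundary layer. Away from the $y$-axis one has Fenichel theory, but the $y$-axis is an entire line of equilibria and the fast fibers become tangent to it near $y=\bar y$ (Figure~\ref{fig_bifdelay}(B)), so \eqref{deq_xy} is not conjugate to the standard form \eqref{sf_ab} there, and one must instead invoke Theorem~\ref{thm_entryexit2} for systems of the type \eqref{sf_ab2} and carefully match the regular and layer estimates; in particular, extracting the claimed $O(\epsilon)$ rate of closeness of $\ell_\epsilon$ to $\Gamma(x_0)$ needs the quantitative, not merely $C^0$, form of the entry--exit estimate. By contrast, once the return map and its $C^0$ convergence are in hand, the Floquet identity above --- a mild variation of classical planar Floquet theory --- and the resulting limit $P_\epsilon'\to e^{\lambda(x_0)/c}$ need only that $C^0$ conclusion together with \eqref{est_Teps}, and are comparatively routine.
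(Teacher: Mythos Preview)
Your proposal is correct and follows essentially the same route as the paper's Section~\ref{sec_proof}: assemble a return map from the regular fast flow and the slow passage via Theorem~\ref{thm_entryexit2}, compute its derivative through the divergence identity (the paper isolates this step as Theorem~\ref{thm_floquet}), and close with the implicit function theorem. The minor differences are that the paper proves the converse by constructing an explicit invariant region bounded by a forward $\epsilon$-trajectory and a transversal (Figure~\ref{fig_trapping}) rather than by arguing that $P_\epsilon-\mathrm{id}$ is bounded away from zero, and that in the Floquet step the paper keeps $\int p(x)F'(x)\,dt$ as the single line integral $\int_{\Gamma_\epsilon}\frac{p(x)F'(x)}{y(-\epsilon+cp(x))}\,dy$ and bounds its slow portion directly, avoiding your $F'(0)\log(y_\omega/y_\alpha)$ cancellation. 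One correction to your final paragraph: showing that the slow-layer contribution to the divergence integral vanishes also requires the bound \eqref{ineq_bint_Delta} of Theorem~\ref{thm_entryexit2} (i.e.\ $\int_{\{x<\Delta\}}x(t)\,dt\le M\Delta$), not merely the $C^0$ entry--exit relation together with \eqref{est_Teps}; without it, a term such as $\int_{\{x<\Delta\}}p(x)\,dt$ is a~priori only $O(\Delta/\epsilon)$.
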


The proof of Theorem \ref{thm_y0} is deferred to Section \ref{sec_proof}.

By Theorem \ref{thm_y0}
and the uniform boundedness of solutions of \eqref{deq_xy},
given any neighborhood $V$ of the union of curves \beq{union_Gamma}
  \bigcup_{x_0:\; \chi(x_0)=0}\Gamma(x_0),
\]
for all sufficiently small $\epsilon>0$,
all periodic orbits of \eqref{deq_xy} lie entirely in $V$.
Some particular cases of this observation are given in the corollary below.

By condition \eqref{cond_p},
system \eqref{deq_xy}
has a unique positive equilibrium $E_*$
for all sufficiently small $\epsilon>0$.
By calculating the Jacobian matrix,
$E_*$ is locally asymptotically stable if $F'(0)<0$,
and is unstable if $F'(0)>0$.
By a phase portrait analysis,
it is easy to show that every periodic orbit must surround $E_*$.

\begin{corollary}\label{cor_gas}
The following statements hold.
\begin{enumerate}[label=$\mathrm{(\roman*)}$]
\item
Assume that $F'(0)>0$ and $\chi(x)$ has no root
in the interval $(0,K)$.
Then,
for all sufficiently small $\epsilon>0$,
the positive equilibrium of \eqref{deq_xy}
is globally asymptotically stable.
\item
Assume that $F'(0)\ne 0$
and $\chi(x)$ has exactly $n$ distinct roots,
$x_0<x_1<\cdots<x_{n-1}$, in $(0,K)$.
If \beq{lambda_switch}
  \lambda(x_{j-1})\lambda(x_{j})<0
  \quad\text{for }\; j=1,2,\dots,n-1,
\] then \eqref{deq_xy} has exactly $n$ periodic orbits
for any sufficiently small $\epsilon>0$,
and these periodic orbits form $n$ relaxation oscillations
as $\epsilon\to 0$.
\end{enumerate}
\end{corollary}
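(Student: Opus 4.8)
The plan is to reduce Corollary~\ref{cor_gas} to Theorem~\ref{thm_y0}, to the observation recorded just after it (for any neighborhood $V$ of the union \eqref{union_Gamma}, all periodic orbits of \eqref{deq_xy} lie in $V$ once $\epsilon$ is small), and to the Poincar\'e--Bendixson theorem applied inside the first quadrant, where solutions of \eqref{deq_xy} are uniformly bounded. Two elementary facts are used throughout: $E_*$ is not a saddle (its Jacobian has positive determinant), and no interior trajectory can accumulate on the boundary of the quadrant, because the boundary equilibria $(0,0)$ and $(K,0)$ are saddles whose stable manifolds lie on the axes. Hence, by Poincar\'e--Bendixson, every interior $\omega$-limit set is either $\{E_*\}$ or a periodic orbit (a homoclinic loop being impossible since $E_*$ is not a saddle).

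\emph{Part (i).} Since $\chi$ has no root in $(0,K)$, the union \eqref{union_Gamma} is empty, so the observation after Theorem~\ref{thm_y0}, taken with $V=\emptyset$, shows that \eqref{deq_xy} has no periodic orbit once $\epsilon$ is small. Thus every interior $\omega$-limit set equals $\{E_*\}$, so $E_*$ is globally attracting; a globally attracting non-saddle equilibrium of a planar flow is globally asymptotically stable, which is the assertion.

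\emph{Part (ii).} Let $x_0<\dots<x_{n-1}$ be the roots of $\chi$ in $(0,K)$; condition \eqref{lambda_switch} forces $\lambda(x_j)\ne 0$ for each $j$. By Theorem~\ref{thm_y0}, for all small $\epsilon$ there is, for each $j$, a hyperbolic periodic orbit $\ell_\epsilon^{(j)}$ in an $O(\epsilon)$-neighborhood of $\Gamma(x_j)$, orbitally stable when $\lambda(x_j)<0$ and unstable when $\lambda(x_j)>0$, of minimal period of order $1/\epsilon$ by \eqref{est_Teps}; hence each is a relaxation oscillation, and by \eqref{lambda_switch} their stabilities alternate. These $n$ orbits are pairwise distinct for small $\epsilon$ since the arcs $\gamma(x_j)$ are disjoint compact curves staying a fixed positive distance apart away from the $y$-axis. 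To show there are no others, I would use that, by the observation after Theorem~\ref{thm_y0}, every periodic orbit lies in an arbitrarily small neighborhood of $\bigcup_j\Gamma(x_j)$; since a periodic orbit is a simple closed trajectory, for small $\epsilon$ it must perform a single relaxation loop --- shadowing one arc $\gamma(x_0)$ to within $O(\epsilon)$ along its fast part and obeying the entry--exit relation $H(y_\omega(x_0))=H(y_\alpha(x_0))$, i.e.\ $\chi(x_0)=0$, along its slow part near $\{x=0\}$ --- and therefore lies in the tube of a single $\Gamma(x_j)$. Finally, inside that tube the Poincar\'e return map is a $C^1$-small perturbation, as $\epsilon\to 0$, of the singular return map whose fixed point at $x_j$ is non-degenerate exactly because $\lambda(x_j)\ne 0$; so there is precisely one periodic orbit near each $\Gamma(x_j)$, hence exactly $n$ in all.

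\emph{Main obstacle.} The substantive step is the upper bound in (ii): promoting ``every periodic orbit lies near $\bigcup_j\Gamma(x_j)$'' to ``exactly one lies near each $\Gamma(x_j)$''. This needs the argument confining a simple closed orbit to a single tube and, more delicately, $C^1$-control of the true return map by the singular one, so that non-degeneracy of the singular fixed point (supplied by $\lambda(x_j)\ne 0$) yields local uniqueness; both hinge on the quantitative entry--exit estimates near the degenerate line $\{x=0\}$ furnished by Theorem~\ref{thm_entryexit2}, which are also what make the observation after Theorem~\ref{thm_y0} applicable to \emph{all} periodic orbits in part (i), including any that might try to hide near $\{x=0\}$ or near $E_*$.
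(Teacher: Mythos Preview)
Your proof correctly identifies the key ingredients---Theorem~\ref{thm_y0}, uniform boundedness, and Poincar\'e--Bendixson---and your handling of existence and local uniqueness near each $\Gamma(x_j)$ via the $C^1$ return map is sound (it is essentially what is inside the proof of Theorem~\ref{thm_y0}). The gap is precisely the one you flag at the end: ruling out periodic orbits that ``hide near $\{x=0\}$ or near $E_*$''. Your proposed resolution via the entry--exit estimates of Theorem~\ref{thm_entryexit2} does not work for this. That theorem concerns trajectories that cross a section $\{x=\delta_1\}$ for some \emph{fixed} $\delta_1>0$; it says nothing about a periodic orbit contained entirely in $\{0<x<\delta_1\}$. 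This is exactly the scenario you must exclude when the right-most point $x_R(\epsilon)$ of a hypothetical orbit tends to $0$ as $\epsilon\to 0$, so that the orbit never reaches any fixed cross section and the converse part of Theorem~\ref{thm_y0} cannot be applied to it. The ``observation'' after Theorem~\ref{thm_y0} that you invoke with $V=\emptyset$ is itself informal and suffers from the same lacuna.

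The paper closes this gap with Lemma~\ref{lem_xfar}, a direct Floquet computation: if $F'$ does not change sign on $(0,\widehat{x}]$, then the divergence integral along any periodic orbit lying in the strip $\{0<x<\widehat{x}\}$ has a definite sign, so every such orbit has the same (non-neutral) stability as $E_*$, contradicting Poincar\'e--Bendixson. This is where the hypothesis $F'(0)\ne 0$ in the corollary is actually used---it guarantees that such a strip exists---and without Lemma~\ref{lem_xfar} (or an equivalent argument) neither part (i) nor the upper bound in part (ii) is complete. Once you add Lemma~\ref{lem_xfar} to your list of ingredients, the rest of your argument coincides with the paper's.
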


Throughout this paper,
global stability of equilibria or periodic orbits
is with respect to the set of non-stationary points in the first quadrant.
Since the stable manifold of the two boundary equilibria
are the $x$- or $y$-axes,
system \eqref{deq_xy} 
has no closed loop formed by heteroclinic orbits.
Therefore, the only possible limit cycles for \eqref{deq_xy} are periodic orbits,
and, by the Poincar\'{e}-Bendixon Theorem,
the global stability of the unique positive equilibrium,
for sufficiently small $\epsilon>0$,
is equivalent to the non-existence of a periodic orbit.

\begin{remark}
In statement (i) of Corollary \ref{cor_gas},
the condition $F'(0)>0$
can be replaced by $F'(0)\ne 0$
for the same conclusion.
However, by Lemma \ref{lem_xmin} below,
when $F'(0)<0$,
$\chi(x)$ must have at least one root in $(0,K)$.
In statement (ii),
condition \eqref{lambda_switch}
can be replaced by $\lambda(x_j)\ne 0\;\forall\;j$.
However, since two adjacent periodic orbits cannot have
the same non-neutral stability,
these two conditions are equivalent.
\end{remark}

To prove Corollary \ref{cor_gas}, we need the following lemma,
which is covered by Wolkowicz \cite{Wolkowicz:1988}.
We provide a proof of this lemma here for completeness.

\begin{lemma}\label{lem_xfar}
If $F'(x)>0$ on $(0,\widehat{x}]$ 
or $F'(x)<0$ on $(0,\widehat{x}]$ 
for some $\widehat{x}>0$,
then
no periodic orbit of \eqref{deq_xy}
lies entirely in the strip $\{(x,y): 0<x<\widehat{x}\}$
for any sufficiently small $\epsilon>0$.
\end{lemma}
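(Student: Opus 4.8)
The plan is to rule out periodic orbits living in the thin strip $\{0<x<\widehat x\}$ by a Dulac-type argument on the one-sidedness of $F$ there, combined with the structure of the vector field \eqref{deq_xy} near the $y$-axis. First I would treat the case $F'(x)<0$ on $(0,\widehat x]$; the case $F'(x)>0$ is symmetric after reversing the roles of attracting/repelling behavior of the $y$-axis. Suppose, for contradiction, that for a sequence $\epsilon_n\to 0$ there is a periodic orbit $\ell_n\subset\{0<x<\widehat x\}$. Each $\ell_n$ must surround the positive equilibrium $E_*$, and since $E_*\to$ a point on the $y$-axis as $\epsilon\to 0$, the orbits $\ell_n$ are forced to come arbitrarily close to the segment $\{x=0\}$ in the region where $F>0$. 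On that segment, the $y$-dynamics is governed by \eqref{slow_xy}, $\dot y=-\epsilon y<0$, so $y$ is strictly decreasing along $\ell_n$ while $x$ stays small; but a periodic orbit must return to its starting $y$-value. The gain of $y$ that compensates this loss can only occur in the region $y<F(x)$, where $\dot x>0$ pushes the orbit rightward. I would make this quantitative using the logarithmic coordinate: along $\ell_n$,
\[
  0=\oint_{\ell_n} d(\log y)
  =\oint_{\ell_n}\frac{\dot y}{y}\,dt
  =\oint_{\ell_n}\big(-\epsilon_n+ c\,p(x)\big)\,dt,
\]
so $\oint_{\ell_n} c\,p(x)\,dt=\epsilon_n T_n$, i.e.\ the time-average of $cp(x)$ over $\ell_n$ is $\epsilon_n$, which tends to $0$. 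Since $p$ is continuous with $p(0)=0$, this says the orbit spends almost all of its (long) period with $x$ near $0$.

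The key step is then a monotonicity/Dulac computation. I would use the divergence of the vector field \eqref{deq_xy} against a suitable Dulac function, the natural candidate being $B(x,y)=1/\big(y\,p(x)\big)$, as in Wolkowicz \cite{Wolkowicz:1988}. Writing the system as $\dot x=p(x)(F(x)-y)$, $\dot y=y(-\epsilon+cp(x))$, one computes
\[
  \frac{\partial}{\partial x}\!\left(\frac{p(x)(F(x)-y)}{y\,p(x)}\right)
  +\frac{\partial}{\partial y}\!\left(\frac{-\epsilon+cp(x)}{p(x)}\right)
  =\frac{F'(x)}{y},
\]
which has the sign of $F'(x)$ throughout the strip — hence is strictly negative on $\{0<x<\widehat x\}$ by hypothesis. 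If $\ell_n$ were entirely contained in this strip, Green's theorem applied over the region enclosed by $\ell_n$ (a region on which $B$ is smooth and positive, since $x>0$ there) would give $\iint_{\mathrm{int}\,\ell_n}\frac{F'(x)}{y}\,dx\,dy=0$, contradicting $F'(x)<0$. The same computation with the opposite sign handles $F'(x)>0$. This already yields the conclusion for every $\epsilon>0$, not merely small $\epsilon$ — so in fact the "$\epsilon$ small" hypothesis is only needed to guarantee $E_*$ exists and lies in the first quadrant, which is where one invokes condition \eqref{cond_p}.

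The main obstacle, and the reason the heuristic needs care, is that the Dulac function $B=1/(yp(x))$ degenerates as $x\to 0^+$ (where $p(x)\to 0$): if a periodic orbit approached the $y$-axis, Green's theorem could not be applied naïvely over the enclosed region because $B$ blows up there. So I must first establish that any periodic orbit in $\{0<x<\widehat x\}$ is bounded away from $\{x=0\}$ uniformly — or, more precisely, reconcile this with the earlier observation that $\ell_n$ is forced close to the $y$-axis. The resolution is that the two are incompatible: the uniform-separation obstruction is exactly what forces the contradiction. Concretely, I would fix a small $\delta\in(0,\widehat x)$, excise the sliver $\{0<x<\delta\}$, and apply Green's theorem on $\ell_n\cap\{x\ge\delta\}$ together with the boundary pieces on $\{x=\delta\}$; the boundary flux through $\{x=\delta\}$ is controlled because $\dot x=p(\delta)(F(\delta)-y)$ changes sign only once in $y$, so the net contribution has a definite sign that reinforces (rather than cancels) the strictly-signed area integral. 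Pushing $\delta\to 0$ and using that $p(\delta)\to 0$ while $F(\delta)\to\bar y>0$ closes the argument. Alternatively — and this is cleaner — I would invoke the earlier heuristic from the entry-exit discussion: a periodic orbit that enters the vicinity of the $y$-axis near some $(0,y_\omega)$ in the region $y>\bar y$ must exit near $(0,y_0)$ with $H(y_\omega)=H(y_0)$; if the whole orbit stays in $\{0<x<\widehat x\}$ with $F'<0$ there, the monotonicity of $F$ prevents the orbit from reaching the part of the $y$-axis below $\bar y$ at all, so no such matched exit point exists, again a contradiction. Either route works; I expect the Green's-theorem route with the excised sliver to be the one that generalizes most transparently, and it is the one I would write up.
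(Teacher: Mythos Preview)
Your Dulac computation with $B(x,y)=1/(y\,p(x))$ is correct and already proves the lemma in full: on the open strip $\{0<x<\widehat x,\ y>0\}$, the function $B$ is $C^1$ (since $p(x)>0$ there), the strip is simply connected, and $\operatorname{div}(BV)=F'(x)/y$ has a strict sign throughout. Dulac's criterion therefore excludes any periodic orbit lying in the strip, for \emph{every} $\epsilon$. Your ``main obstacle'' paragraph is a phantom: a single periodic orbit $\ell$ is compact, so its leftmost $x$-coordinate $x_L$ is strictly positive, and the region it bounds is contained in $\{x\ge x_L\}$ where $B$ is perfectly smooth. You are conflating the behavior of a \emph{sequence} $\ell_n$ (which could approach the $y$-axis as $\epsilon_n\to 0$) with the behavior of any individual orbit; the Dulac argument only ever handles one $\ell$ at a time, and no excision or limiting is needed. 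Everything from ``The main obstacle'' onward should be deleted.

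The paper takes a different route. Instead of Dulac, it computes the Floquet exponent $\mu(\ell)=\int_0^T\operatorname{div}V\,dt$ directly (no rescaling), converts it to line integrals, and finds
\[
  \mu(\ell)=\int_{x_L}^{x_R}\frac{F'(x)}{F(x)-Y_-(x)}\,dx+\int_{x_L}^{x_R}\frac{F'(x)}{Y_+(x)-F(x)}\,dx,
\]
which inherits the sign of $F'$. Thus every periodic orbit in the strip has the same (non-neutral) orbital stability; since for small $\epsilon$ the positive equilibrium $E_*$ lies in the strip and has matching stability, Poincar\'e--Bendixson gives the contradiction. Your Dulac argument is cleaner and stronger (it needs no smallness of $\epsilon$ and no reference to $E_*$), while the paper's Floquet computation has the advantage of being reused verbatim later in the proof of Theorem~\ref{thm_y0}, where the integral of $\operatorname{div}V$ along a near-periodic trajectory is the central object.
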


\begin{proof}[Proof of Lemma \ref*{lem_xfar}]
Assume that $\ell$
is a periodic orbit for \eqref{deq_xy} for some $\epsilon>0$.
We parameterize $\ell$ 
by \[
  \ell
  = \{(x,Y_-(x): x\in [x_L,x_R]\}
  \cup \{(x,Y_+(x): x\in [x_L,x_R]\},
\]
with $Y_-(x)\ge F(x)$ and $Y_+(x)\ge F(x)$,
and also by $\ell=\{(x(t),y(t)): t\in [0,T]\}$,
where $T$ is the minimal period of $\ell$.
Then Floquet multiplier can be calculated as \beq{mu_xfar}
  \mu(\ell)
  &\equiv
  \int_0^{T}
  \mathrm{div}\begin{pmatrix}
    p(x)\big(F(x)-y\big)\\
    y(-\epsilon+ cp(x))
  \end{pmatrix}\;dt
  \\[.5em]
  &=\int_0^{T} p'(x)\big(F(x)-y\big)+ p(x)F'(x)-\epsilon+ cp(x)\; dt\\[.5em]
  &=\int_{\ell} \frac{p'(x)}{p(x)}\;dx
  + \int_{\ell}\frac{F'(x)}{F(x)-y}\;dx
  + \int_{\ell}\frac{1}{y}\;dy.
\]
The first and the third integrals equal zero by the periodicity of $\ell$, so \[
  \mu(\ell)
  = \int_{x_L}^{x_R} \frac{F'(x)}{F(x)-Y_-(x)}\;dx
  + \int_{x_L}^{x_R} \frac{F'(x)}{Y_+(x)-F(x)}\;dx.
\]

Assume that $F'(x)>0$ on $(0,\widehat{x}]$.
Suppose $\ell$ is contained in $\{(x,y): 0<x<\widehat{x}\}$.
Then it follows from \eqref{mu_xfar} that $\mu(\ell)>0$.
By standard Floquet theory,
$\ell$ is orbitally unstable.
By calculating the Jacobian matrix,
it is easy to show
that the positive equilibrium 
of \eqref{deq_xy}
is unstable
for all sufficiently small $\epsilon>0$.
By the above calculation of $\mu(\ell)$,
any periodic orbit that lies between $\ell$ and the positive equilibrium
must be orbitally unstable.
This contradicts the Poincar\'{e}-Bendixon Theorem.

The case that $F'(x)<0$ on $(0,\widehat{x}]$ can treated similarly.
\end{proof}

\begin{figure}[t]
\begin{center}
\begin{tabular}{cc}
\frame
{\includegraphics[trim = 2cm 1.7cm 1cm .3cm, clip, width=.44\textwidth]{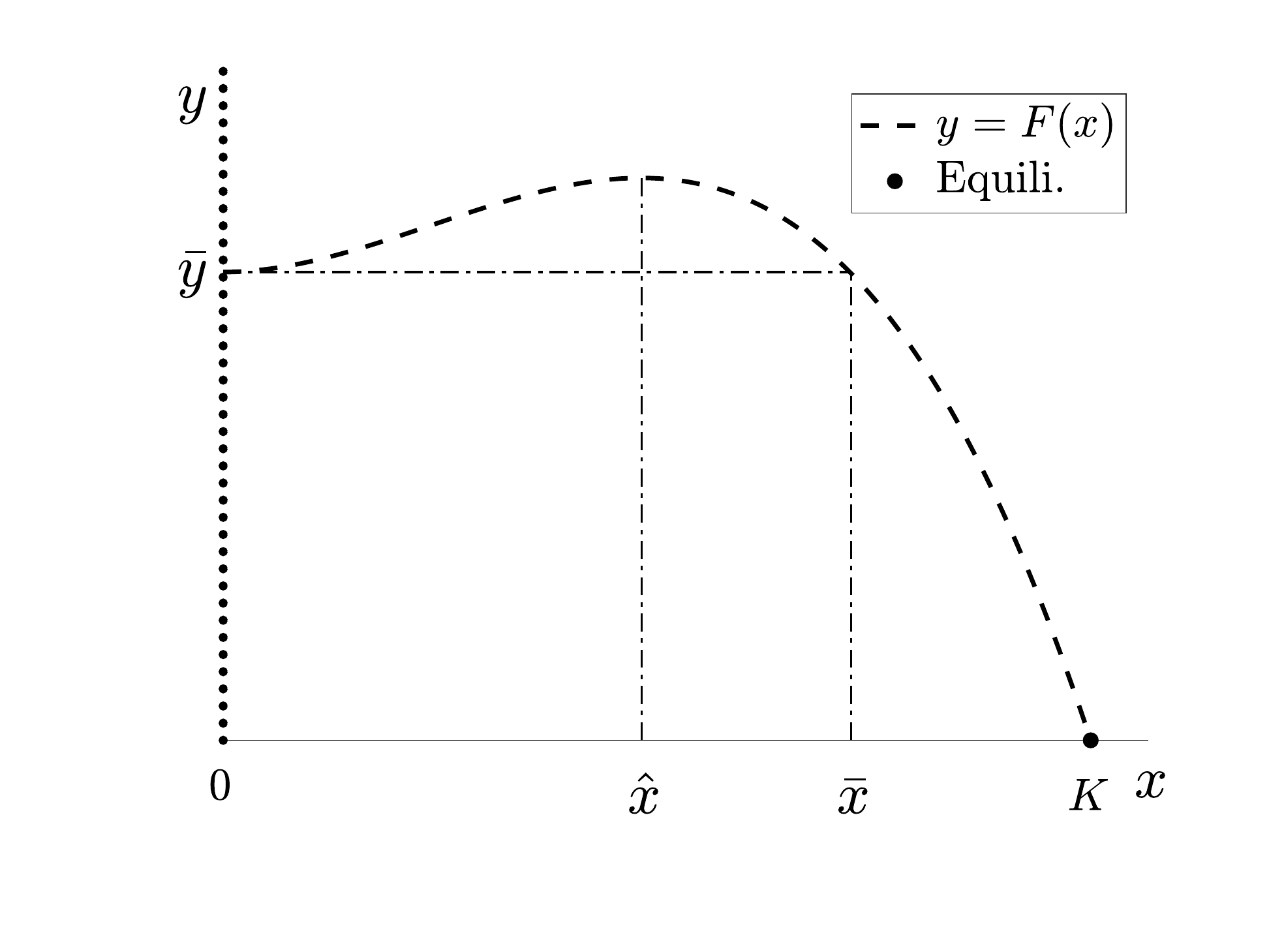}}
&
\frame
{\includegraphics[trim = 2cm 1.7cm 1cm .3cm, clip, width=.44\textwidth]{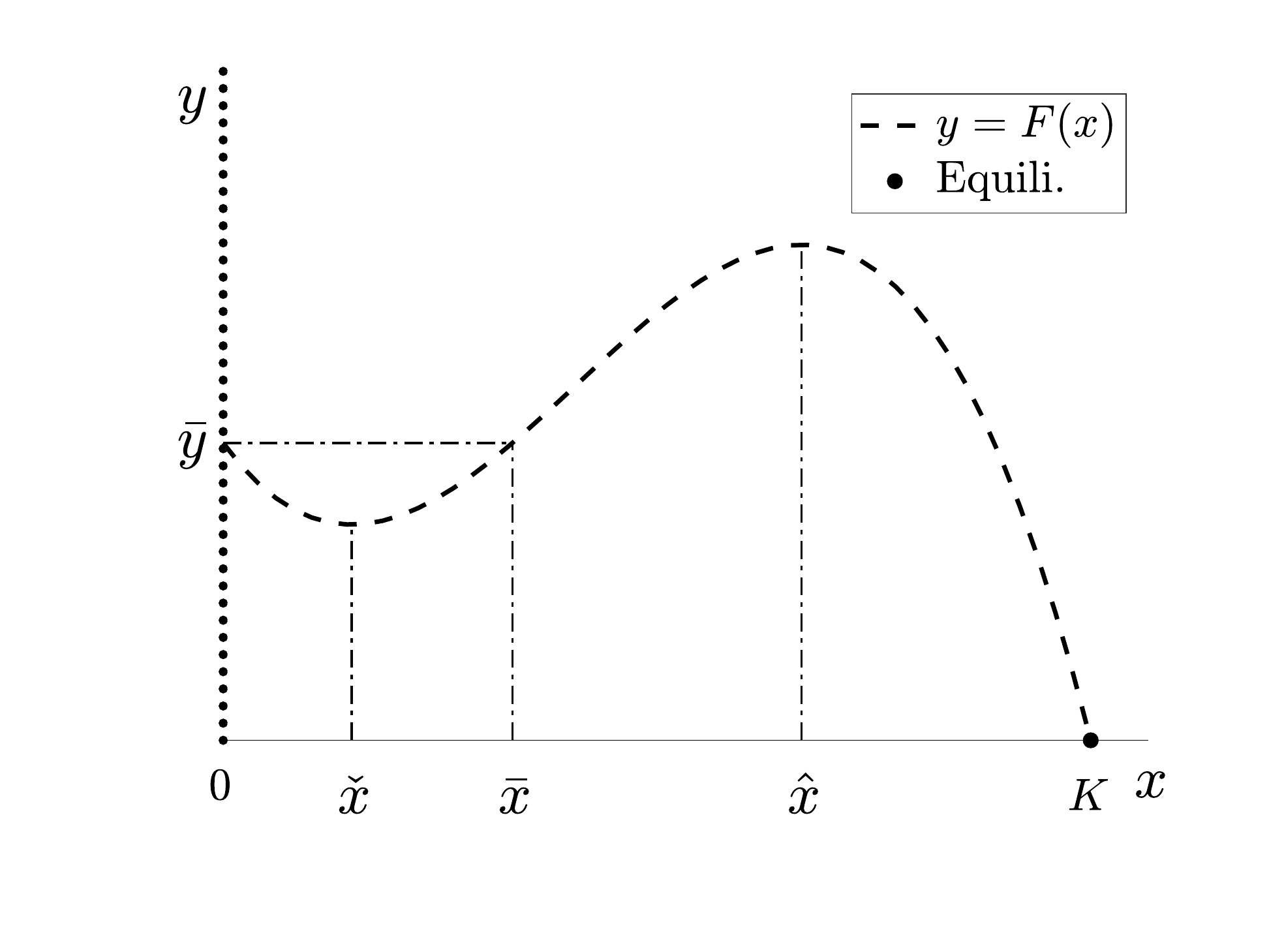}}
\\
(A)
&
(B)
\end{tabular}
\end{center}
\caption{
(A) The one-hump condition \eqref{cond_1hump}:
$F'(x)>0$ for $x\in (0,\widehat{x})$ and $F'(x)<0$ for $x\in (\widehat{x},K)$.
(B) The two-hump condition \eqref{cond_2hump}:
$F'(x)<0$ for $x\in (0,\widecheck{x})\cup (\widehat{x},K)$
and $F'(x)>0$ for $x\in (\widecheck{x},\widehat{x})$.
}
\label{fig_labels}
\end{figure}

\begin{proof}[Proof of Corollary \ref*{cor_gas}]
For (i),
by Theorem \ref{thm_y0}, Lemma \ref{lem_xfar} and
the uniform boundedness of solutions,
system \eqref{deq_xy} has no periodic orbit
in the first quadrant
for any sufficiently small $\epsilon>0$.
Since the stable manifolds of the boundary equilibria
do not intersect the interior of the first quadrant
$E_*$ is globally asymptotically stable
by the Poincar\'{e}-Bendixon Theorem.

For (ii),
since $\chi(x_j)=0$ and $\lambda(x_j)\ne 0$ for $j=0,1,\dots,n-1$,
by Theorem \ref{thm_y0}
there are $n$ periodic orbits of \eqref{deq_xy}
corresponding to $n$ relaxation oscillations.
By Theorem \ref{thm_y0}, Lemma \ref{lem_xfar} and
the uniform boundedness of solutions of \eqref{deq_xy},
there are no other periodic orbits.
\end{proof}

\section{The Case with One Hump}
\label{sec_1hump}
In this section we study the case that 
the function $F(x)$ has a single interior local extremum on the interval $(0,K)$.
That is, 
the prey-isocline
satisfies the one-hump condition (see Figure \ref{fig_labels}{(A)}):
For some $\widehat{x}\in (0,K)$,
\beq{cond_1hump}
  F'(x)>0\;\;
  \forall\;x\in (0,\widehat{x})
  \quad\text{and}\quad
  F'(x)>0\;\;
  \forall\;x\in (\widehat{x},K).
\]

\begin{theorem}\label{thm_1hump}
Assume \eqref{cond_1hump}.
Then, for any sufficiently small $\epsilon>0$,
system \eqref{deq_xy}
has a unique periodic orbit ${\ell}_\epsilon$.
Moreover, as $\epsilon\to 0$,
${\ell}_\epsilon$ approaches $\Gamma(x_0)$
defined in \eqref{def_gamma12},
for a unique $x_0\in (\widehat{x},K)$ satisfying $\chi(x_0)=0$.
\end{theorem}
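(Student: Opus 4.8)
The plan is to reduce Theorem \ref{thm_1hump} to Corollary \ref{cor_gas}(ii) with $n=1$ by establishing two facts: (a) under the one-hump condition \eqref{cond_1hump}, the function $\chi$ has exactly one root in $(0,K)$, and this root lies in $(\widehat{x},K)$; and (b) at that root $\lambda\ne 0$. Given (a) and (b), Corollary \ref{cor_gas}(ii) (with the trivial switching condition \eqref{lambda_switch} vacuous for $n=1$) immediately yields a unique periodic orbit forming a relaxation oscillation near $\Gamma(x_0)$, which is the conclusion.

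First I would analyze the sign of $\chi$ on the two subintervals. On $(0,\widehat{x})$, where $F'>0$, I would use the representation \eqref{chi_int2}, namely $\chi(x_0)=\int_{y_\alpha(x_0)}^{y_\omega(x_0)}\frac{F(X(y,x_0))-\bar{y}}{y}\,dy$. Along $\gamma(x_0)$ the $x$-coordinate $X(y,x_0)$ increases from $0$ (at $y=y_\alpha$) up to $x_0$ (at $y=F(x_0)$, the turning point) and back to $0$ (at $y=y_\omega$); since $F$ is increasing on $(0,\widehat{x})$ and $X\le x_0<\widehat{x}$, we get $F(X(y,x_0))-\bar{y}>0$ for $y>y_\alpha$ (strictly, away from the endpoint), hence $\chi(x_0)>0$ for all $x_0\in(0,\widehat{x}]$. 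In particular $\chi$ has no root in $(0,\widehat{x}]$. Alternatively and more robustly, Lemma \ref{lem_xfar} (with $\widehat{x}$ the hump location) already tells us no periodic orbit lies entirely in $\{0<x<\widehat{x}\}$, so by Theorem \ref{thm_y0} no root of $\chi$ can lie in $(0,\widehat{x})$; I would cite this rather than belabor the integral estimate. On $(\widehat{x},K)$, where $F'<0$, I want to show $\chi$ is strictly monotone. The cleanest route is to observe that as $x_0$ increases through $(\widehat{x},K)$ the turning value $F(x_0)$ strictly decreases (from $F(\widehat{x})$ down to $F(K)=0$), the trajectories $\gamma(x_0)$ are nested, so both $y_\alpha(x_0)$ and $y_\omega(x_0)$ depend monotonically on $x_0$; combined with Remark's identity $\chi(x)=H(y_\omega(x))-H(y_\alpha(x))$ and the monotonicity of $H$ on $(\bar{y},\infty)$ and on $(0,\bar{y})$, one gets that $\chi$ is strictly decreasing on $(\widehat{x},K)$. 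For the boundary behavior: as $x_0\to\widehat{x}^+$, by continuity $\chi(x_0)\to\chi(\widehat{x})>0$ by the first step; as $x_0\to K^-$, the trajectory $\gamma(x_0)$ converges to the limiting heteroclinic bounded by the axes and the orbit through $(K,0)$, along which the $x$-excursion is maximal and $F(X)$ spends most of its "time" near the low values where $F(X)-\bar{y}<0$, forcing $\chi(x_0)\to$ a strictly negative limit (or $-\infty$). The intermediate value theorem plus strict monotonicity then gives exactly one root $x_0\in(\widehat{x},K)$.

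For (b), the nondegeneracy $\lambda(x_0)\ne 0$: along $\gamma(x_0)$ for the root $x_0\in(\widehat{x},K)$, split the integral \eqref{def_lambda} at the turning point into the "entry" branch $y\in(y_\alpha,F(x_0))$ and the "exit" branch $y\in(F(x_0),y_\omega)$. On the entry branch $X$ runs over $(0,x_0)$ and $F'(X)$ is positive for $X<\widehat{x}$ and negative for $X\in(\widehat{x},x_0)$; on the exit branch $X$ again runs over $(0,x_0)$ with the same sign pattern. One checks that the dominant contribution comes from $X$ near $0$ where $F'(X)\to F'(0)<0$ (since the one-hump plus $F(K)=0$ forces $\widehat x$ to be a maximum, so actually $F'(0)>0$ — I must be careful here: under \eqref{cond_1hump}, $\widehat{x}$ is a maximum of $F$ so $F'(0)>0$, and $E_*$ is unstable; this is consistent with a stable limit cycle). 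The point is that $F'(X)/y$ does not change sign in a way that causes cancellation to all orders; the most direct argument is that $\lambda(x_0)$ equals (a positive multiple of) the derivative of a monotone Poincaré-type map at its fixed point, which by the strict monotonicity of $\chi$ established above is forced to be $<0$. Concretely: I would show $\lambda$ and $\chi'$ are related by $\lambda(x_0)$ being the linearization that governs stability, and since $\chi$ crosses zero transversally with $\chi'(x_0)<0$ (strict decrease), the corresponding multiplier satisfies $\lambda(x_0)<0$, giving both nondegeneracy and orbital stability of $\ell_\epsilon$.

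The main obstacle is the rigorous proof of strict monotonicity of $\chi$ on $(\widehat{x},K)$ and the matching transversality $\lambda(x_0)\ne0$ — these require careful control of how $y_\alpha(x_0)$, $y_\omega(x_0)$, and the parametrization $X(y,x_0)$ vary with $x_0$, i.e.\ differentiating the orbit of the limiting fast system \eqref{fast_xy} with respect to the turning point. I expect to handle this by writing the first integral of \eqref{fast_xy} explicitly: since $\dot y=cyp(x)$ and $\dot x=p(x)(F(x)-y)$, the orbits satisfy $\frac{dy}{dx}=\frac{cy}{F(x)-y}$, equivalently $\frac{d}{dx}\big(\text{something}\big)$ has a closed form only after an integrating factor, but the relation $c\,x = \int (\text{(}F(X)-y)/y\text{)}\,$-type identity from the Remark already encodes the needed constraint. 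Using that identity to express $y_\alpha,y_\omega$ as functions linked through $x_0$ and differentiating should give $\chi'(x_0)<0$ after sign bookkeeping. If a fully self-contained monotonicity proof proves too delicate, the fallback — which I believe suffices — is: existence of at least one root follows from the sign change of $\chi$ ($>0$ near $\widehat x$, $<0$ near $K$) together with Lemma \ref{lem_xfar}; uniqueness then follows because any two roots would, via Theorem \ref{thm_y0} and the alternating-stability obstruction used in the proof of Lemma \ref{lem_xfar} (two adjacent periodic orbits cannot share non-neutral stability, yet $\lambda$ cannot change sign without an additional root), contradict the Poincaré–Bendixson dichotomy. Either way the theorem reduces cleanly to Corollary \ref{cor_gas}.
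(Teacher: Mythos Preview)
Your overall plan---reduce to Corollary \ref{cor_gas}(ii) by showing $\chi$ has a unique root in $(\widehat{x},K)$ with $\lambda\ne 0$ there---matches the paper's, and your argument that $\chi(x_0)>0$ for $x_0\in(0,\widehat{x}]$ via \eqref{chi_int2} is correct and essentially what Lemma \ref{lem_xmin} does. But there is a genuine gap on both the uniqueness and the nondegeneracy steps. Your monotonicity argument for $\chi$ on $(\widehat{x},K)$ does not work as written: as $x_0$ increases the orbits nest outward, so $y_\alpha(x_0)$ decreases and $y_\omega(x_0)$ increases, whence \emph{both} $H(y_\omega(x_0))$ and $H(y_\alpha(x_0))$ increase (the former since $H'>0$ on $(\bar{y},\infty)$, the latter since $H'<0$ on $(0,\bar{y})$). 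The difference $\chi=H(y_\omega)-H(y_\alpha)$ is then inconclusive. Your fallback invokes a principle---``$\lambda$ cannot change sign without an additional root of $\chi$''---that is nowhere established and conflates the two functions; and your attempt to link $\lambda(x_0)$ to $\chi'(x_0)$ remains purely heuristic. You correctly flag the $\lambda\ne 0$ question as the main obstacle, but never resolve it.

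The ingredient the paper supplies and you are missing is precisely the second half of Lemma \ref{lem_xmin}: under \eqref{cond_1hump}, one has $\lambda(x_0)<0$ for \emph{every} $x_0\in(\bar{x},K)$ (where $\bar{x}>\widehat{x}$ is defined by $F(\bar{x})=F(0)$), proved by writing $\lambda(x_0)$ in the form \eqref{lambda_int_F}, dropping the (negative) tail over $[\bar{x},x_0]$, and then exploiting the monotonicity of $Y_\pm$ together with the identity $F(\bar{x})=F(0)$ to bound the remaining integral over $[0,\bar{x}]$ by an exact logarithm that vanishes. With this in hand the paper does \emph{not} prove monotonicity of $\chi$ at all; instead it argues directly: if $\chi$ had two roots $x_0<x_1$, both lie in $(\bar{x},K)$ (since $\chi>0$ on $(0,\bar{x}]$), so $\lambda<0$ on the whole interval $[x_0,x_1]$, giving two stable periodic orbits with no unstable orbit between them---contradicting Poincar\'e--Bendixson. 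The same estimate simultaneously supplies $\lambda(x_0)<0$ at the unique root, closing your nondegeneracy gap.
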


Note that Theorem \ref{thm_1hump}
does not require the concavity of $F(x)$.
Examples \ref{ex_H4_1hump} and \ref{ex_Ivlev} below
contain cases with non-concave $F(x)$ for which the theorem applies
(see Remarks \ref{rmk_noncovex_H4} and \ref{rmk_noncovex_Ivlev}).

\begin{remark}
In the case that $F(x)$ is concave,
since it is a special case of \eqref{cond_1hump},
by Theorem \ref{thm_1hump}
there is a globally orbitally asymptotically stable periodic orbit
for any sufficiently small $\epsilon>0$.
A predator-prey model
with concave prey-isocline
and at least two limit cycles
has been constructed by Hofbauer and So \cite{Hofbauer:1990}
with parameters such that the equilibrium point
undergoes a subcritical Hopf bifurcation.
Using a continuation tool, e.g.\ \texttt{XPPAUT} \cite{Ermentrout:2002},
one can show numerically that,
as $\epsilon$ varies in the model in \cite{Hofbauer:1990},
the inner limit cycle, which emerged from the subcritical Hopf point,
encounters a saddle-node bifurcation,
switches to the outer limit cycle,
and survives as $\epsilon$ decreases.
Hence the system has a single limit cycle when $\epsilon>0$ becomes small.
\end{remark}

To prove the theorem,
the first step
is the observation that \beq{chi_limit_K}
  \lim_{x\to K^-} \chi(x)=-\infty.
\]
To derive \eqref{chi_limit_K},
we write $\chi(x)$ defined in \eqref{def_chi} by $\chi(x)=H(y_\omega(x))-H(y_\alpha(x))$,
where $H(y)$ is the function defined in \eqref{def_H}.
Note that the functions $y_\alpha(x)$ and $y_\omega(x)$
satisfy \beq{y_limits_K}
  \lim_{x\to K^-}y_\alpha(x)= 0
  \quad\text{and}\quad
  \lim_{x\to K^-}y_\omega(x)
  \quad\text{exists and is finite}.
\] 
Since $\lim_{y\to 0^+}H(y)=\infty$, 
\eqref{chi_limit_K} follows from \eqref{y_limits_K}.

If $\chi(\bar{x})>0$ for some $\bar{x}\in (0,K)$,
then by \eqref{chi_limit_K}
the continuous function $\chi(x)$
has at least one root in $(\bar{x},K)$.
The following lemma
asserts that this is the case when $F(x)>F(0)$ on $(0,\bar{x})$.

\begin{lemma}\label{lem_xmin}
The following statements hold.
\begin{enumerate}[label=$\mathrm{(\roman*)}$]
\item
If $F(x)>F(0)$ on $(0,\bar{x})$ for some $\bar{x}>0$,
then $\chi(x)<0$ for all $x\in (0,\bar{x}]$,
and $\chi(x)$ has at least one root in $(\bar{x},K)$.
If, in addition,
$F(\bar{x})=F(0)$ and, for some $\widehat{x}$ and $x_0$
with $\widehat{x}<\bar{x}<x_0$,
\beq{cond_F_climax}
  F'(x)>0\;\;\forall\; x\in (0,\widehat{x})
  \quad\text{and}\quad
  F'(x)<0\;\;\forall\; x\in (\widehat{x},x_0),
\]
then $\lambda(x_0)<0$.
\item
If $F(x)<F(0)$ on $(0,\bar{x})$ for some $\bar{x}>0$,
then $\chi(x)>0$ for all $x\in (0,\bar{x}]$.
If, in addition,
$F(\bar{x})=F(0)$ and, for some $\widecheck{x}$ and $x_0$
with $\widecheck{x}<\bar{x}<x_0$,
\beq{cond_F_valley}
  F'(x)<0\;\;\forall\; x\in (0,\widecheck{x})
  \quad\text{and}\quad
  F'(x)>0\;\;\forall\; x\in (\widecheck{x},x_0),
\]
then $\lambda(x_0)>0$.
\end{enumerate}
\end{lemma}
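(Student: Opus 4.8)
The plan is to use the representation $\chi(x) = H(y_\omega(x)) - H(y_\alpha(x))$ together with the monotonicity of $H$, and to analyze $\lambda(x_0)$ via its expression as a kind of derivative of $\chi$. For the sign of $\chi$ on $(0,\bar{x}]$ in part (i): I would work from the alternative formula $\chi(x_0) = \int_{y_\alpha(x_0)}^{y_\omega(x_0)} \frac{F(X(y,x_0)) - \bar{y}}{y}\,dy$ established in the second remark. The hypothesis $F(x) > F(0) = \bar{y}$ on $(0,\bar{x})$ means that along $\gamma(x_0)$ for $x_0 \in (0,\bar{x}]$, every point has $x$-coordinate in $(0,\bar{x})$, hence $F(X(y,x_0)) > \bar{y}$ wherever $X(y,x_0) > 0$; so the integrand is nonnegative. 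But this gives $\chi > 0$, the wrong sign! So instead I would go back to the original formula $\chi(x_0) = \int_{y_\alpha}^{y_\omega} \frac{y - \bar{y}}{y}\,dy$ and observe that the relevant comparison is between $H(y_\omega)$ and $H(y_\alpha)$: since $F(x) > \bar{y}$ on $(0,\bar{x})$, the trajectory $\gamma(x_0)$ stays in the region where the $x$-nullcline lies above $\bar{y}$, which forces the trajectory to rise more on the way in than it falls on the way out relative to the reference level — concretely, $y_\omega(x_0) - \bar{y}$ is large compared to $\bar{y} - y_\alpha(x_0)$ in the precise sense that $H(y_\omega) < H(y_\alpha)$. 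The cleanest route is: using $\int c\,\dot{x}\,dt = 0$ over the whole trajectory (the identity in the second remark), $\int_{y_\alpha}^{y_\omega} \frac{F(X(y)) - y}{y}\,dy = 0$; combining with $\chi = \int \frac{y-\bar{y}}{y}\,dy = \int \frac{F(X(y)) - \bar{y}}{y}\,dy - \int\frac{F(X(y)) - y}{y}\,dy$ shows $\chi(x_0) = \int_{y_\alpha}^{y_\omega}\frac{F(X(y)) - \bar{y}}{y}\,dy$, and now I need the sign to be negative — which means I must be more careful about where $X(y,x_0) = 0$: the trajectory spends part of its "time" (in the $y$-variable) at $x = 0$ only in the limiting sense, so on the interior of $\gamma(x_0)$ we have $X > 0$ and $F(X) > \bar{y}$ strictly, giving $\chi > 0$. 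I will reconcile this by tracking signs carefully in the actual write-up; the correct statement as printed has $\chi < 0$ on $(0,\bar x]$ in case (i), so the orientation of the integral (direction of increasing $y$ versus the direction the trajectory traverses) must be accounted for, and that bookkeeping is where the real work sits.

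For the existence of a root in $(\bar{x},K)$: once the sign of $\chi$ on $(0,\bar{x}]$ is pinned down (say $\chi(\bar{x}) \neq 0$ with the sign opposite to what $\lim_{x\to K^-}\chi(x) = -\infty$ or $+\infty$ gives), the Intermediate Value Theorem applied to the continuous function $\chi$ on $[\bar{x}, K)$ yields a root. In case (i) we have $\chi < 0$ on $(0,\bar x]$, so I would instead argue via $F(\bar x) = F(0)$: I would show $\chi$ changes monotonicity or use that just past $\bar{x}$, where $F$ dips below $\bar{y}$ (since $F'(\widehat{x}\text{-region}) < 0$ carries $F$ down through $\bar{y}$ at $\bar{x}$), the integrand picks up a negative-then-positive structure forcing $\chi$ to increase past $0$ before $\lim_{x\to K^-}\chi = -\infty$ pulls it back — giving at least one interior root. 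The symmetric argument handles part (ii), where $F(x) < \bar{y}$ on $(0,\bar{x})$ gives $\chi > 0$ there, and no root-existence claim is made (consistent with $\lim_{x\to K^-}\chi = -\infty$ being approached from above, which need not cross zero if $F'(0) < 0$ throughout... actually part (ii) correctly omits the root claim).

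For the sign of $\lambda(x_0)$: I would use the interpretation from the second remark that the integrand of $\lambda(x_0) = \int_{y_\alpha}^{y_\omega}\frac{F'(X(y,x_0))}{y}\,dy$ is the $X$-derivative of the integrand of $\chi$. Under hypothesis \eqref{cond_F_climax} with $F(\bar{x}) = \bar{y}$, the trajectory $\gamma(x_0)$ with $x_0 > \bar{x}$ passes through the region $x \in (0, x_0)$; on this trajectory, $F'(X(y,x_0)) > 0$ for the portion where $X(y,x_0) \in (0,\widehat{x})$ and $F'(X(y,x_0)) < 0$ for the portion where $X(y,x_0) \in (\widehat{x}, x_0)$. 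The weighting $1/y$ is larger on the low-$y$ part of the integral. The key geometric fact is that the portion of $\gamma(x_0)$ with small $x$ (where $F' > 0$) occurs at $y$-values near $y_\alpha$ and near $y_\omega$ — the extremes — while the portion with $x$ near $x_0$ (where $F' < 0$) straddles the level $y = F(x_0) < \bar{y}$... I need to compare the weighted mass. The cleanest approach: relate $\lambda(x_0)$ to $\frac{d}{dx_0}\chi(x_0)$ or to a comparison of $\chi$ at nearby parameter values using the already-established sign of $\chi$. Since $\chi(\bar{x}) $ has a definite sign and $\chi$ has a root $x_* \in (\bar x, K)$, and since $\lambda$ governs the local stability/instability via Theorem \ref{thm_y0}, a monotonicity-of-$\chi$ argument near its root would give the sign of $\lambda$ at that root; but the lemma asks for the sign of $\lambda(x_0)$ at an arbitrary $x_0$ satisfying \eqref{cond_F_climax}, not just at a root of $\chi$. \textbf{The main obstacle} is precisely this: establishing $\lambda(x_0) < 0$ (resp.\ $> 0$) for all such $x_0$, not merely at zeros of $\chi$, which requires a direct estimate of the signed, $1/y$-weighted integral of $F'(X(y,x_0))$ along the fast trajectory. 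I expect to handle it by splitting $\gamma(x_0)$ at its intersections with the vertical line $x = \widehat{x}$ (there are exactly two such intersections under \eqref{cond_F_climax}), parametrizing each arc by $y$, and using the monotonicity of $y \mapsto X(y,x_0)$ on each arc plus the strict sign of $F'$ to bound each piece; the delicate point is that the outer arcs (small $x$, $F' > 0$) contribute at both the smallest and largest $y$-values, so their $1/y$-weighted contributions partially cancel in a way that must be shown to be dominated by the middle arc's negative contribution. I would control this using the relation $\frac{d}{dy}X(y,x_0) = \frac{\dot x}{\dot y} = \frac{F(X) - y}{cy}$ from \eqref{fast_xy}, which lets me change variables from $y$ to $X$ and convert $\int \frac{F'(X)}{y}\,dy$ into $\int \frac{F'(X)}{F(X) - y(X)}\,\frac{dX}{c}$ along each monotone arc, at which point the sign structure becomes more transparent because $F(X) - y$ has a fixed sign on each arc (positive on the returning arc, negative on the incoming arc, or vice versa).
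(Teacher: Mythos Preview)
Your confusion about the sign of $\chi$ is not your fault: the lemma as printed contains a typo. The paper's own proof establishes $\chi(x_0)>0$ for $x_0\in(0,\bar{x}]$ in case~(i) and $\chi(x_0)<0$ in case~(ii), exactly as your first computation gave. Your instinct to use the identity $\chi(x_0)=\int_{y_\alpha}^{y_\omega}\frac{F(X(y))-\bar y}{y}\,dy$ was correct; since $X(y,x_0)\in(0,\bar x)$ and $F>\bar y$ there, the integrand is strictly positive, so $\chi(x_0)>0$. Combined with $\lim_{x\to K^-}\chi(x)=-\infty$, the Intermediate Value Theorem gives a root in $(\bar x,K)$ immediately --- no further analysis of what happens ``just past $\bar x$'' is needed. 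The paper reaches the same integral after passing to the $x$-parameterization $Y_\pm(x)$, which is just the change of variables you yourself propose at the end.

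For $\lambda(x_0)$ you correctly arrive at the $x$-parameterized form
\[
  \lambda(x_0)=\int_0^{x_0}F'(x)\Big(\tfrac{1}{F(x)-Y_-(x)}+\tfrac{1}{Y_+(x)-F(x)}\Big)\,dx,
\]
but you stop short of the key estimate. The missing trick, which the paper supplies, is: first discard the integral over $(\bar x,x_0)$, where $F'<0$ and the bracket is positive, so that piece already has the desired sign; then on $[0,\bar x]$ exploit that $Y_-$ is increasing, $Y_+$ is decreasing, and $F'$ changes sign exactly at $\widehat x$ to replace $Y_\pm(x)$ by the constants $Y_\pm(\widehat x)$ while preserving the direction of the inequality on both subintervals. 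With constant denominators the integrand has the explicit antiderivative $\log\bigl((F(x)-Y_-(\widehat x))/(Y_+(\widehat x)-F(x))\bigr)$, which vanishes between $0$ and $\bar x$ precisely because $F(\bar x)=F(0)$. This is the step your outline lacks; your plan to ``compare weighted masses'' of the arcs would work, but only once you see this monotonicity-replacement that makes the comparison exact. Case~(ii) is the mirror image.
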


See Figure \ref{fig_labels} for an illustration of the notations in Lemma \ref{lem_xmin}.

Note that the above lemma only guarantees a root of $\chi(x)$
in the first case.
In the second case,
$\chi(x)$ may not have a root in $(0,K)$.

\begin{proof}[Proof of Lemma \ref*{lem_xmin}]
System \eqref{fast_xy}
has the same trajectories as \beq{fast_xy_normal}
  \dot{x}=F(x)-y,\quad
  \dot{y}=cy.
\] Along any solution of \eqref{fast_xy_normal}, we have \beq{deq_HF}
  \frac{d}{dt}(H(y)-cx)
  =c(F(x)-F(0)).
\]
For each $x_0\in (0,K)$,
parameterize $\gamma(x_0)$ by \beq{def_Ypm}
  \gamma(x_0)
  =\{(x,Y_-(x)): x\in (0,x_0]\}
  \cup \{(x,Y_+(x)): x\in (0,x_0]\}
\] 
with \beq{ineq_Ypm_F}
  Y_-(x)<F(x)
  \quad\text{and}\quad
  Y_+(x)>F(x)
  \quad\forall\; x\in (0,x_0).
\]
By \eqref{fast_xy_normal} and \eqref{deq_HF}, \beq{chi_Ypm}
  \chi(x_0)
  =\int_0^{x_0}
  \big(F(x)-F(0)\big)
  \left(
    \frac{1}{F(x)-Y_-(x)}+\frac{1}{Y_+(x)-F(x)}
  \right) dx.
\]

If $F(x)>F(0)$ on $(0,\bar{x})$,
then by \eqref{chi_Ypm} we have $\chi(x_0)>0$ for all $x_0\in (0,\bar{x}]$.
From \eqref{chi_limit_K},
it follows that the continuous function $\chi$ has at least one root in $(\bar{x},K)$.

If $F(x)<F(0)$ on $(0,\bar{x})$,
then by \eqref{chi_Ypm} we have $\chi(x_0)<0$ for all $x_0\in (0,\bar{x}]$.

Next we investigate the sign of $\lambda(x_0)$.
By \eqref{fast_xy_normal} 
and the definition of $\lambda(x_0)$ in \eqref{def_lambda},
\beq{lambda_int_F}
  \lambda(x_0)
  =\int_{0}^{x_0}F'(x)\left(
    \frac{1}{F(x)-Y_-(x)}+\frac{1}{Y_+(x)-F(x)}
  \right) dx.
\]
If $x_0\in (\bar{x},K)$ satisfies \eqref{cond_F_climax},
then by \eqref{ineq_Ypm_F} and \eqref{lambda_int_F}
we have \[
  \lambda(x_0)
  <\int_0^{\bar{x}}
  F'(x)\left(
    \frac{1}{F(x)-Y_-(x)}+\frac{1}{Y_+(x)-F(x)}
  \right) dx.
\]
Since $Y_+(x)$ is decreasing and $Y_-(x)$ is increasing,
by condition \eqref{cond_F_climax}, \[
  \lambda(x_0)
  &<\int_0^{\bar{x}}
  F'(x)\left(
    \frac{1}{F(x)-Y_-(\widehat{x})}+\frac{1}{Y_+(\widehat{x})-F(x)}
  \right) dx
  \\[.5em]
  &=\left.\log\left(
    \frac{F(x)-Y_-((\widehat{x})}{Y_+(\widehat{x})-F(x)}
  \right)\right|_{x=0}^{\bar{x}}
  = 0.
\] The last equality followed from the condition $F(\bar{x})=F(0)$.

Similarly,
if $x_0\in (\bar{x},K)$ satisfies \eqref{cond_F_valley},
then by \eqref{ineq_Ypm_F} and \eqref{lambda_int_F}
we have \[
  \lambda(x_0)
  >\int_0^{\bar{x}}
  F'(x)\left(
    \frac{1}{F(x)-Y_-(x)}+\frac{1}{Y_+(x)-F(x)}
  \right) dx,
\]
which implies $\lambda(x_0)>0$ under the condition that $F(\bar{x})=F(0)$.
\end{proof}

\begin{proof}[Proof of Theorem \ref*{thm_1hump}]
Since $F'(0)>0$,
by Lemma \ref{lem_xmin}
the function $\chi$
has at least one root in $(0,K)$.
We claim that $\chi$ has exactly one root in  $(0,K)$.
Suppose $\chi$ has two distinct roots, say $x_0<x_1$.
Then by Lemma \ref{lem_xmin}, \beq{lambda_negative_1hump}
  \lambda(x)<0\quad\forall\; x\in [x_0,x_1].
\]
For all sufficiently small $\epsilon>0$,
by Theorem \ref{thm_y0}
there are 
locally orbitally asymptotically stable
periodic orbits 
$\ell^{(0)}_\epsilon$ and $\ell^{(1)}_\epsilon$
near $\Gamma(x_0)$ and $\Gamma(x_1)$, respectively.
Note that $\Gamma(x_0)$ is enclosed by $\Gamma(x_1)$.
From \eqref{lambda_negative_1hump} and Theorem \ref{thm_y0}
it follows that there is no unstable periodic orbit 
between $\ell^{(0)}_\epsilon$ and $\ell^{(1)}_\epsilon$.
Also note that
no equilibrium lies between $\ell^{(0)}_\epsilon$ and $\ell^{(1)}_\epsilon$.
This contradicts the Poincar\'{e}-Bendixson Theorem.
Therefore $\chi$ has exactly one root.
Denote the unique root of $\chi$ by $x_0$.
By Lemma \ref{lem_xmin}, $\lambda(x_0)<0$.
Hence the result follows from Corollary \ref{cor_gas}.
\end{proof}

The following is a classical model.
We use it as an example for applying Theorem \ref{thm_1hump}
although the results are included in the literature.

\begin{ex}\label{ex_H2}
Consider \eqref{deq_xy} with
Holling type II functional response, namely \beq{pq_H2}
  p(x)=\frac{mx}{a+x},
\] 
where $m$ and $a$ are positive constants.
\begin{itemize}
\item[$\mathrm{(i)}$]
If $a>K$,
then 
for all sufficiently small $\epsilon>0$,
the interior equilibrium is
globally asymptotically stable.
\item[$\mathrm{(ii)}$]
If $a<K$,
then, as $\epsilon\to 0$,
there is a relaxation oscillation
formed by globally asymptotically stable periodic orbits.
\end{itemize}
\end{ex}

\begin{proof}
The function $F(x)=rx(1-x/K)/p(x)$ equals \[
  F(x)= \frac{r}{m}\left(1-\frac{x}{K}\right)(a+x).
\] 
The graph of $F(x)$ is a parabola with maximal point $x=\frac{K-a}{2}$.
If $a<K$, then \eqref{cond_1hump} is satisfied,
and the results in Theorem \ref{thm_1hump} hold.
If $a>K$, then $F(x)<F(0)$ for all $x\in (0,K]$.
By Lemma \ref{lem_xmin}, $\chi(x)$ has no root in the interval $(0,K)$.
From Theorem \ref{thm_1hump} it follows that
\eqref{deq_xy} has no periodic orbit
for all sufficiently small $\epsilon>0$.
\end{proof}

\begin{remark}
In Example \ref{ex_H2},
assertion (i)
is covered in \cite{Hsu:1978competing}.
For assertion (ii),
the existence of a globally asymptotically stable periodic orbits $\ell_\epsilon$
is covered in \cite{Cheng:1981,Kuang:1988}.
Their theorems do not require the smallness of $\epsilon>0$.
The conclusion that $\ell_\epsilon$ forms a relaxation oscillation as $\epsilon\to 0$
is covered in \cite{Hsu:2009}.
\end{remark}

Some numerical simulations for Example \ref{ex_H2}
are presented in Figure \ref{fig_H2eps}.
To produce accurate numerical solutions of \eqref{deq_xy},
in all figures throughout this paper
we adopt the change of variable \[
  (x,y)\mapsto (\log x,y)
\] because the minimum
of the $x$-coordinate
on the periodic orbit $\ell_\epsilon$
is exponentially small.
The numerical solver \texttt{ode23}
in MATLAB \cite{Matlab:2017b}
with tolerance $10^{-6}$ was used.

\begin{figure}[t]
\centering
\begin{tabular}{cc}
\frame
{\includegraphics[trim = 1cm 0cm .5cm 0cm, clip, width=.44\textwidth]{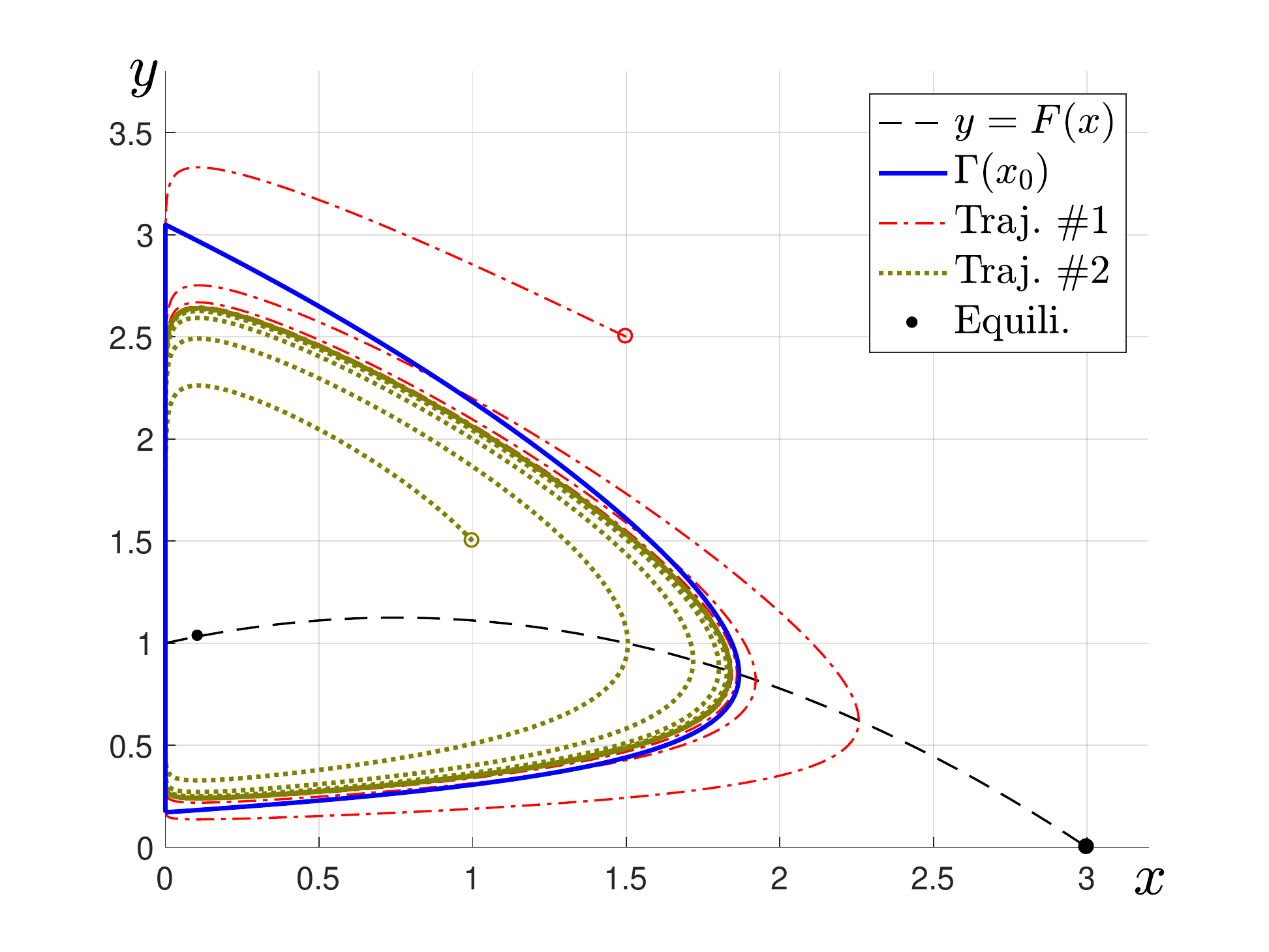}}
&
\frame
{\includegraphics[trim = 1cm 0cm .5cm 0cm, clip, width=.44\textwidth]{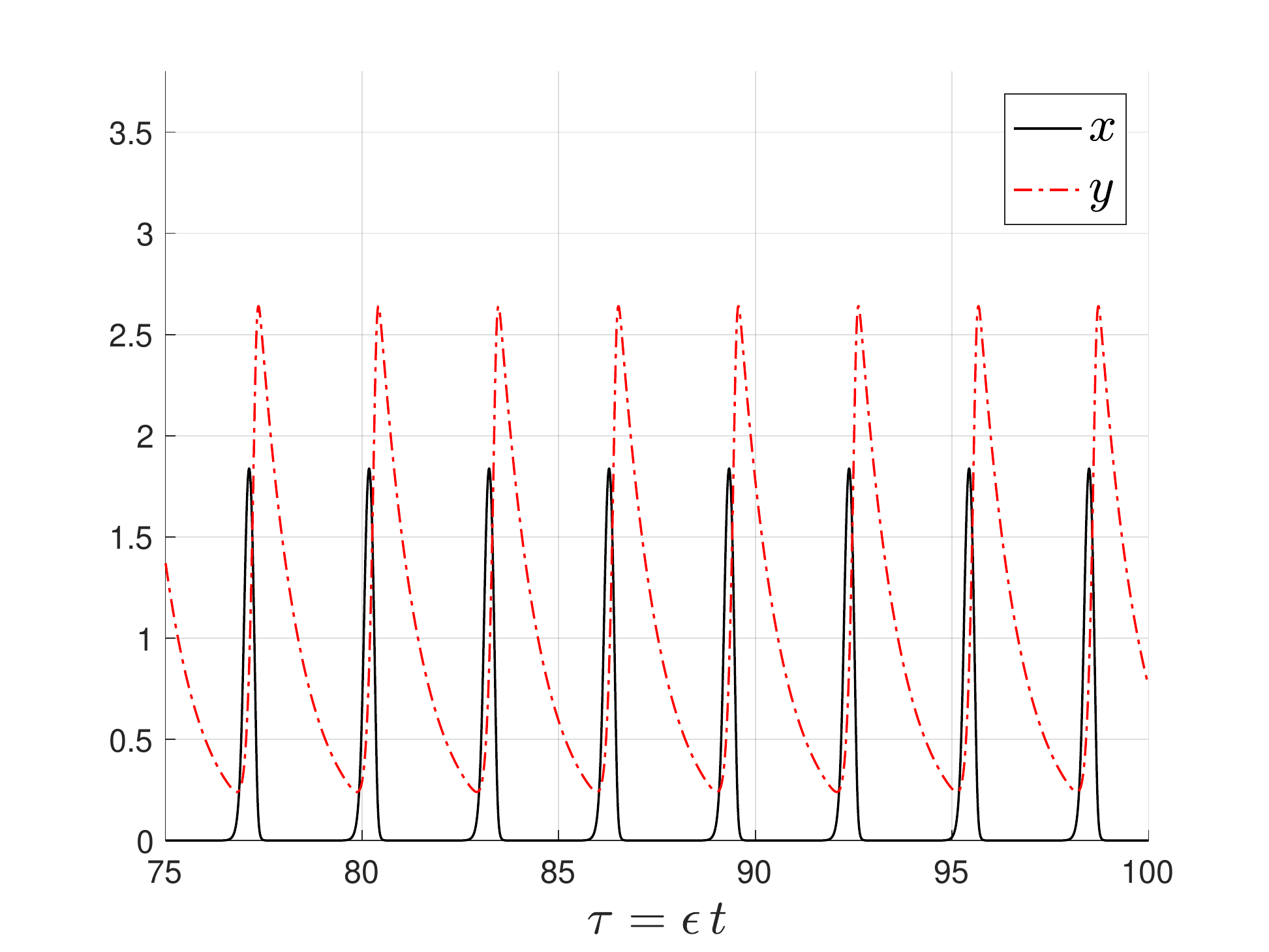}}
\\
(A)
&
(B)
\\[1em]
\frame
{\includegraphics[trim = 1cm 0cm .5cm 0cm, clip, width=.44\textwidth]{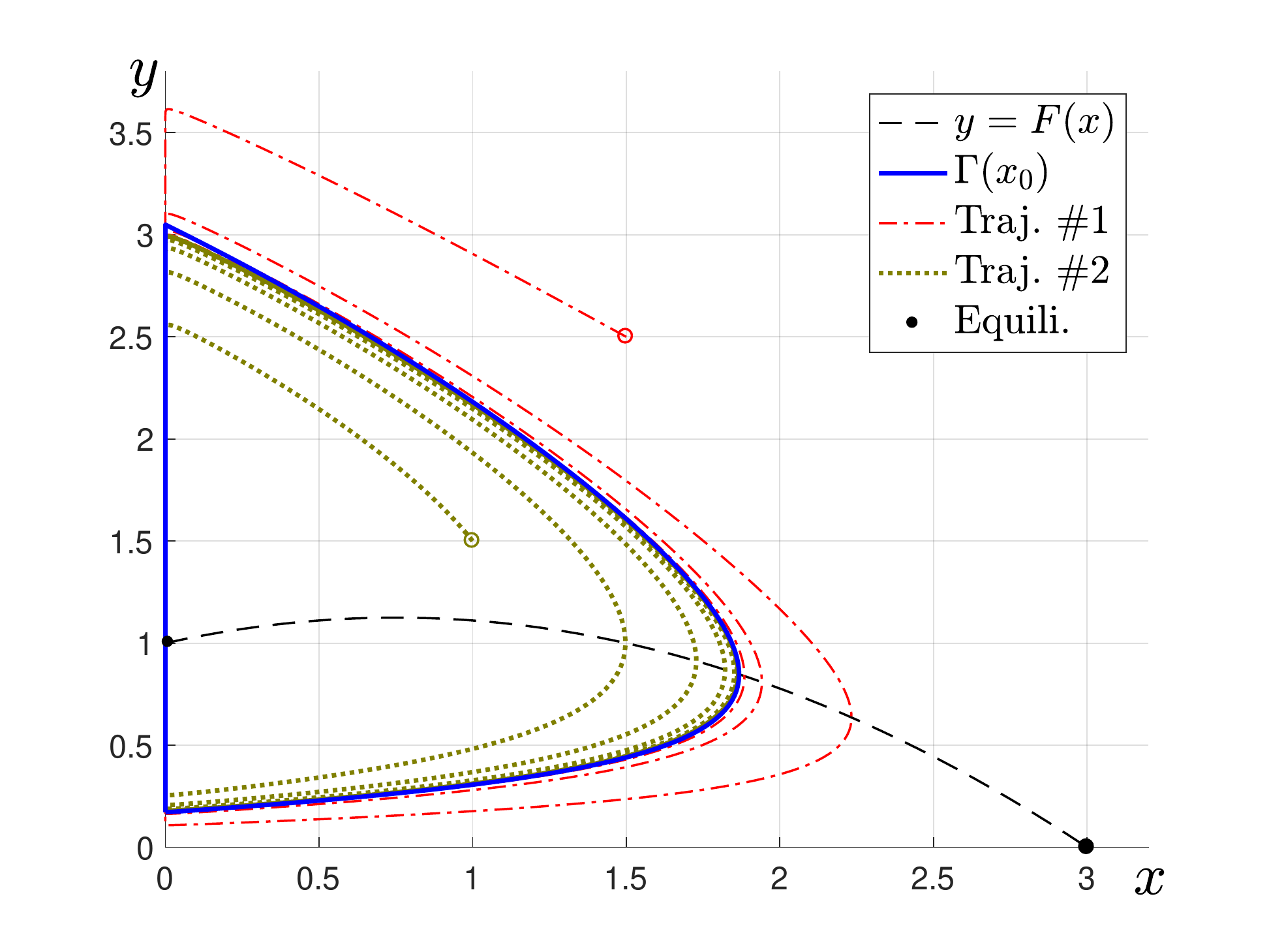}}
&
\frame
{\includegraphics[trim = 1cm 0cm .5cm 0cm, clip, width=.44\textwidth]{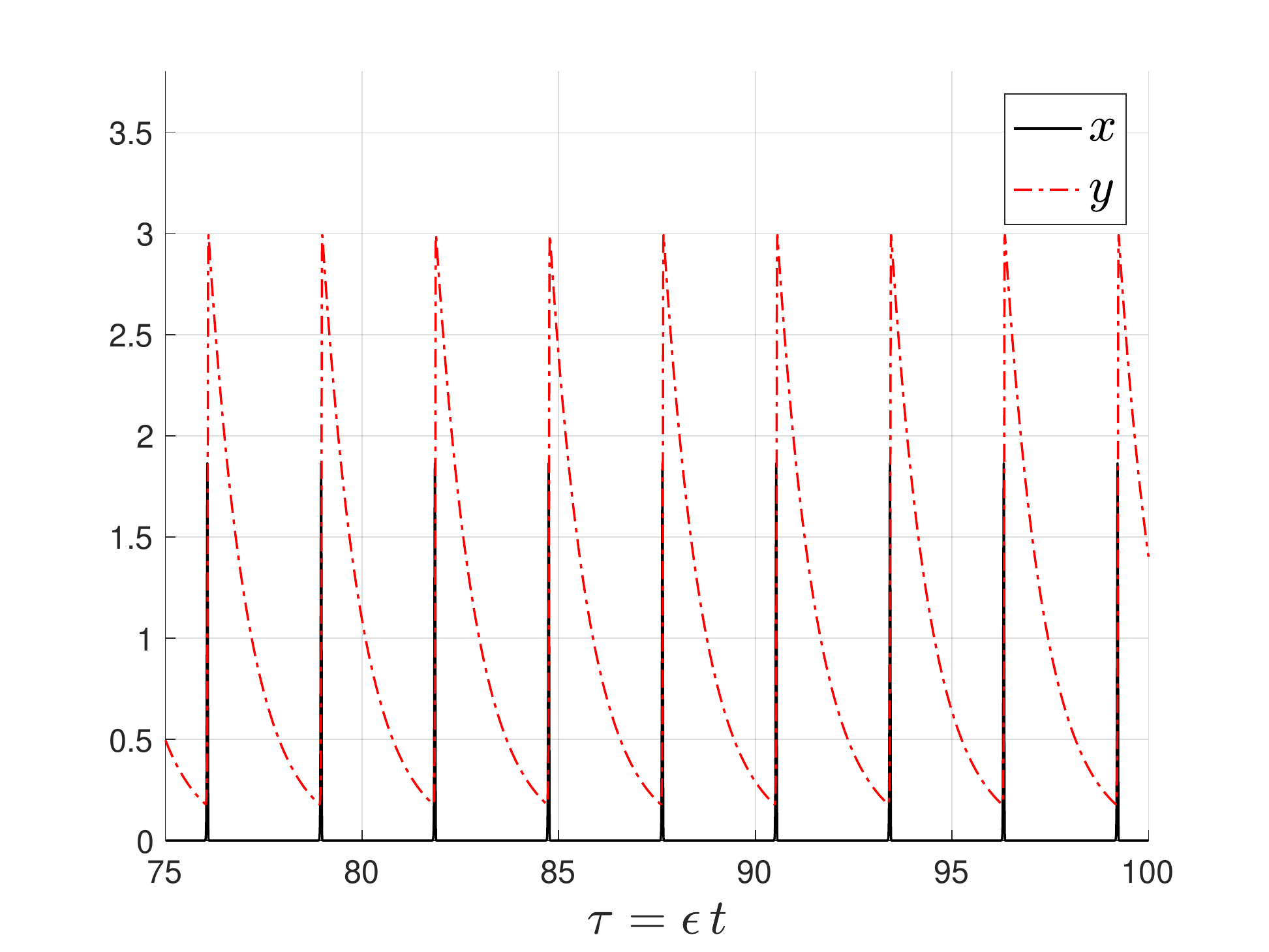}}
\\
(C)
&
(D)
\end{tabular}
\caption{
Forward trajectories for \eqref{deq_xy}
with $p(x)=mx/(a+x)$
and parameters $(r,K,a,m)=(2,3,3,1.5)$, $c=0.5$.
(A)--(B) $\epsilon=0.1$.
(C)--(D) $\epsilon=0.01$.
The initial value for (B) and (D) is $(x,y)=(1.5,2.5)$.
In (A) and (C),
the trajectories approach a periodic orbit $\ell_\epsilon$.
As $\epsilon\to 0$,
$\ell_\epsilon$ approaches 
$\Gamma(x_0)$ given in Theorem \ref{thm_1hump}.
In (B) and (D),
the period of $\ell_\epsilon$
in time $\tau=\epsilon t$ tends to a nonzero value,
and hence the period in time $t$ becomes unbounded.
}
\label{fig_H2eps}
\end{figure}

\begin{ex}\label{ex_H4_1hump}
Consider \eqref{deq_xy} with
the generalized Holling type IV functional response \beq{pq_H4g}
  p(x)=\frac{mx}{ax^2+bx+1},
\]
where $m>0$, $a>0$, and $b>-2\sqrt{a}$
(so that $p(x)>0$ for all $x\ge 0$).

If $b>1/K$,
then, as $\epsilon\to 0$,
there is a relaxation oscillation
formed by globally asymptotically stable periodic orbits.
\end{ex}

\begin{proof}
The function $F(x)=rx(1-K/x)/p(x)$ equals \[
  F(x)= \frac{r}{m}\left(1-\frac{x}{K}\right)(ax^2+bx+1).
\] 
It is straightforward to show that
$F'(0)=b-1/K$ and $F'''(x)=-6a/K$.
Hence $F(x)$ has a single local extremum in the interval $(0,K)$
when $b>1/K$.
Therefore the conclusions in Theorem \ref{thm_1hump} hold.
\end{proof}

\begin{remark}\label{rmk_noncovex_H4}
In Example \ref{ex_H4_1hump},
$F(x)$ is non-concave
when $b<aK$
because $F''(0)=2(a-b/K)$.
Therefore,
when $1/K<b<aK$,
system \eqref{deq_xy}
possesses a non-concave prey-isocline
and a globally stable periodic orbit,
for any sufficiently small $\epsilon>0$,
\end{remark}

\begin{ex}\label{ex_Ivlev}
Consider \eqref{deq_xy} with
the Ivlev's functional response \beq{def_Ivlev}
  p(x)= m(1-e^{-ax}).
\] 
\begin{itemize}
\item[$\mathrm{(i)}$]
If $aK\le 2$,
then 
for all sufficiently small $\epsilon>0$,
the interior equilibrium is
globally asymptotically stable.
\item[$\mathrm{(ii)}$]
If $aK>2$,
then, as $\epsilon\to 0$,
there is a relaxation oscillation
formed by globally asymptotically stable periodic orbits.
\end{itemize}
\end{ex}

\begin{proof}
For $p(x)=m(1-e^{-ax})$,
it has been proved
by Seo and Wolkowicz \cite{Seo:2018}
that the function
$F(x)=rx(1-x/K)/p(x)$
satisfies $F'''(x)<0$ for $x\in (0,K]$, and \[
  F'(0)= \frac{r(Ka-2)}{2Kma}.
\]

When $aK\le 2$,
we have $F'(0)<0$ and $F''(x)<0$ on $(0,K)$,
so $F'(x)<0$ on $(0,K)$.
By Lemma \ref{lem_xmin}, $\chi(x)$ has no root in $(0,K)$.
Hence assertion (i) follows from Corollary \ref{cor_gas}.

When $aK>2$,
we have $F'(0)>0$,
so the facts that $F(0)>0$, $F(K)=0$ and $F'''(x)<0$ on $(0,K]$
imply that
$F(x)$ has a single local extremum in $(0,K)$.
Hence the conclusions in Theorem \ref{thm_1hump} hold.
\end{proof}

\begin{remark}
The existence and uniqueness of a limit cycle
in assertion (ii) of Example \ref{ex_Ivlev}
is covered in \cite{Sugie:1998}.
That paper characterized $(a,\epsilon)$-space
according to the number of periodic orbits.
The conclusion of (i) in Example \ref{ex_Ivlev} is true
without assuming smallness of $\epsilon$
by what
is now a standard Lyapunov function
constructed by Harrison \cite{Harrison:1979}.
\end{remark}

\begin{remark}\label{rmk_noncovex_Ivlev}
In Example \ref{ex_Ivlev},
$F(x)$ is non-concave for $aK<6$
because $F''(0)= \frac{r(aK-6)}{6Km}$,
as shown in \cite{Seo:2018}.
Thus the condition $2<aK<6$
implies that,
for any sufficiently small $\epsilon>0$,
system \eqref{deq_xy}
possesses a non-concave prey-isocline
and a globally stable periodic orbit.
\end{remark}

\begin{ex}\label{ex_log}
Consider \eqref{deq_xy} with
\beq{def_log}
  p(x)= m\log(1+ax)
\] 
where $a,m>0$.
Then, as $\epsilon\to 0$,
there is a relaxation oscillation
formed by globally asymptotically stable periodic orbits.
\end{ex}

\begin{proof}
By Theorem \ref{thm_1hump},
it suffices to show that
the function $F(x)=\frac{r}{mK}x(K-x)/\log(1+ax)$
is concave on the interval $(0,K)$.

By the rescaling $\tilde{x}=ax$ and $\tilde{K}=Ka$, 
we may assume
$F(x)= x(K-x)/\log(1+x)$.
Let $g(x)=1/\log(1+x)$.
Then \[
  F''(x)
  &= -2g(x)+ 2(K-2x)g'(x)+ x(K-x)g''(x)
  \\[.5em]
  &= -2(g(x)+ xg'(x))+ (K-x)(2g'(x)-xg''(x))
  \\[.5em]
  &= -2q_1(x)+ (K-x)q_2(x),
\] where $q_1(x)=g(x)+ xg'(x)$ and $q_2(x)=2g'(x)-xg''(x)$.
It suffices to show that $q_1(x)>0$ and $q_2(x)<0$ for $x>0$.
On one hand, \[
  q_1(x)= \log(1+x)-\frac{x}{1+x}>0.
\]
The last inequality followed from the concavity of $\log(1+x)$
and the fact that $\frac{d}{dx}\log(1+x)= 1/(1+x)$.
On the other hand, \[
  q_2(x)
  &=\frac{-2}{(\log(1+x))^2(1+x)}
  +\frac{x(2+\log(1+x))}{(\log(1+x))^3(1+x)^2}
  \\[.5em]
  &=\frac{-2(\log(1+x))(1+x)+x(2+\log(1+x))}{(\log(1+x))^3(1+x)^2}.
\] Let $q_3(x)$ denotes the numerator of the last expression.
Note that $q_3(0)=0$ and \[
  q_3'(x)
  &=\big(-2-2\log(1+x)\big)
  +\left(2+\log(1+x)+\frac{x}{1+x}\right)
  \\[.5em]
  &=-\log(1+x)+\frac{x}{1+x}
  <0.
\] Hence $q_3(x)<0$,
and therefore $q_2(x)<0$ for $x>0$.
\end{proof}

Before closing this section,
we study the asymptotic behavior of the unique root of $\chi(x)$
given in Theorem \ref{thm_1hump}
as the parameter $c$ tends to $0$.
Note that, in the limiting case that $c=0$, all trajectories for \eqref{fast_xy} are horizontal segments.
Under the one-hump condition \eqref{cond_1hump},
it is easy to show, as illustrated in Figure \ref{fig_c0}{(A)}, that
\beq{x0_limit_1hump}
  y_\alpha(x,c)
  \to
  \begin{cases}
    \bar{y},&\text{if }x\in (0,\bar{x}],
    \\
    F(x),&\text{if }x\in [\bar{x},K),
  \end{cases}
  \quad\text{and}\quad
  y_\omega(x,c)
  \to \begin{cases}
    F(x),& \text{if }x\in (0,\widehat{x}],
    \\
    F(\widehat{x}),& \text{if }x\in [\widehat{x},K),
  \end{cases}
\]
as $c\to 0$.

\begin{figure}[t]
\centering
\begin{tabular}{cc}
\frame
{\includegraphics[trim = 2.4cm 1.2cm 1cm .3cm, clip, width=.44\textwidth]{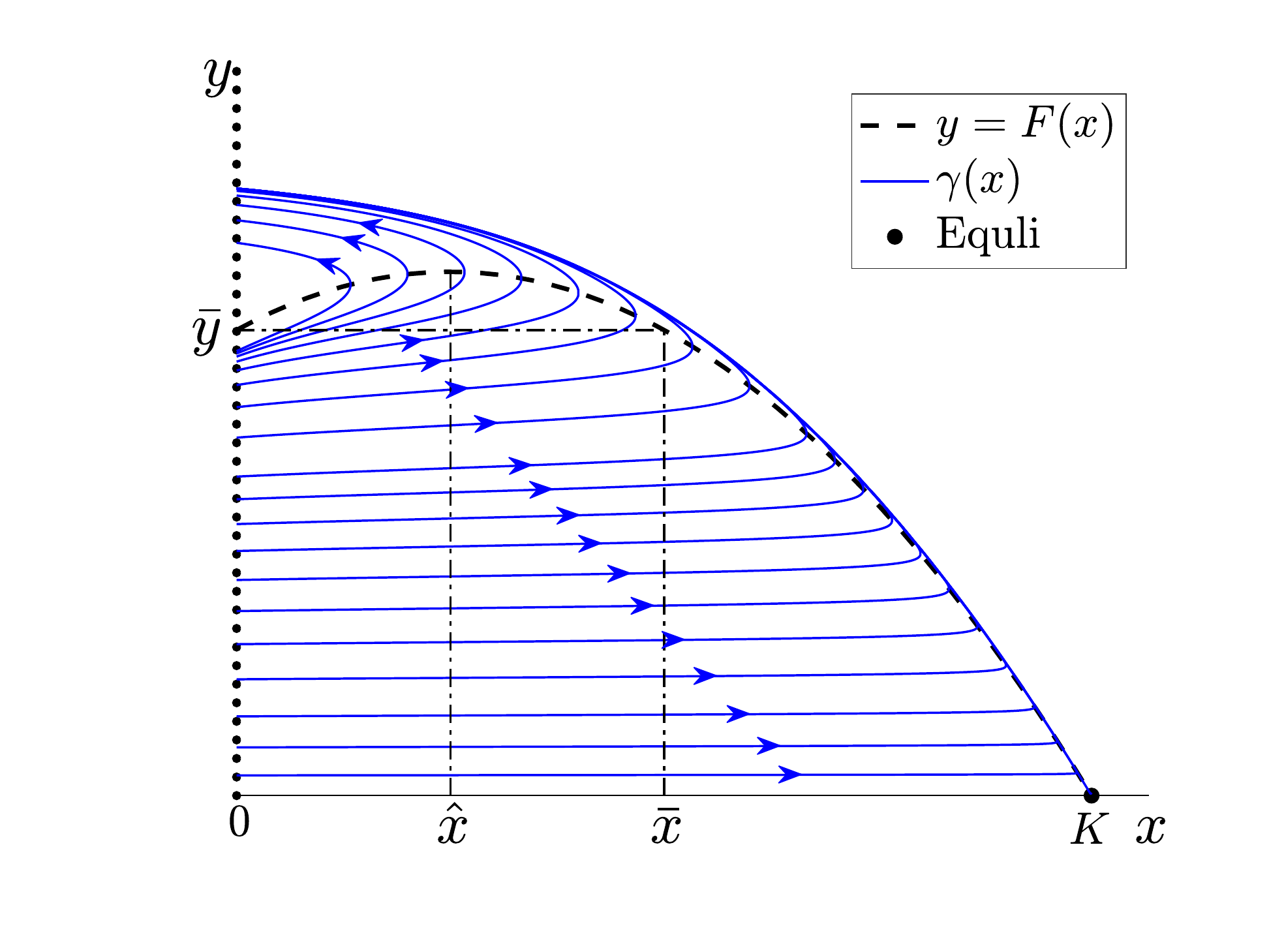}}
&
\frame
{\includegraphics[trim = 2.4cm 1.2cm 1cm .3cm, clip, clip, width=.44\textwidth]{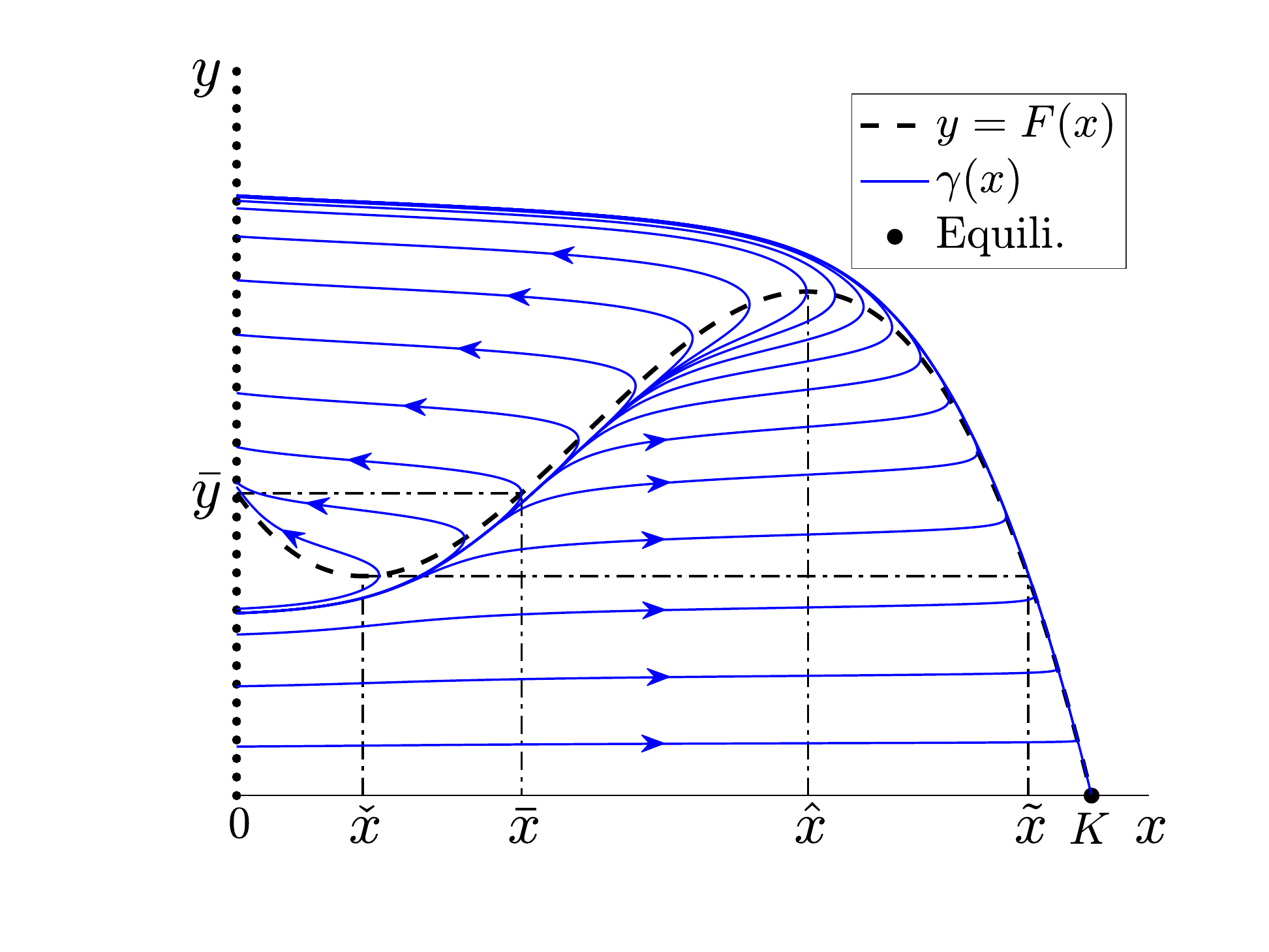}}
\\
(A)
&
(B)
\end{tabular}
\caption{
When $c>0$ is small,
the portions of trajectories of \eqref{fast_xy} 
away from the $x$-isocline $y=F(x)$
are nearly horizontal.
(A) $F(x)$ has a single local extremum.
$y_\alpha(x,c)$ approaches either $\bar{y}$ or $F(x)$,
and $y_\omega(x,c)$ approaches either $\widehat{y}$ or $F(x)$.
(B) $F(x)$ has two local extrema.
$y_\alpha(x,c)$ approaches either $\bar{y}$ or $F(x)$,
and $y_\omega(x,c)$ approaches either $\bar{y}$ or $F(x)$
as $c\to 0$.
}
\label{fig_c0}
\end{figure}

\begin{proposition}\label{prop_1hump}
Assume the one-hump condition \eqref{cond_1hump}.
Let $x_0(c)$
be the unique root of $\chi(\cdot,c)$ given in Theorem \ref{thm_1hump}.
Then \[
  \lim_{c\to 0^+}x_0(c)= \underline{x}
\] where $\underline{x}$ is the unique value in $(\widehat{x},K)$
that satisfies \beq{def_xlower_1hump}
  H(F(\widehat{x}))= H(F(\underline{x})).
\]
\end{proposition}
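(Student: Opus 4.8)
The plan is to pass to the limit $c\to 0^{+}$ in the identity $\chi(x,c)=H(y_\omega(x,c))-H(y_\alpha(x,c))$, with $H$ as in \eqref{def_H}, and then to combine the resulting limit profile with the uniqueness of the root of $\chi(\cdot,c)$ furnished by Theorem~\ref{thm_1hump}. Throughout, let $\bar{x}\in(\widehat{x},K)$ denote the point with $F(\bar{x})=F(0)=\bar{y}$; it exists and is unique because $F$ is strictly increasing on $(0,\widehat{x})$ (so $F(\widehat{x})>\bar{y}$) and strictly decreasing on $(\widehat{x},K)$ with $F(K)=0$. For each fixed $x\in(\bar{x},K)$ the relations \eqref{x0_limit_1hump} give $y_\alpha(x,c)\to F(x)$ and $y_\omega(x,c)\to F(\widehat{x})$ as $c\to 0^{+}$; since $H$ is continuous at the positive numbers $F(x)$ and $F(\widehat{x})$, this yields the pointwise limit
\[
  \chi(x,c)\longrightarrow\chi_0(x):=H(F(\widehat{x}))-H(F(x))\quad\text{as }c\to 0^{+},\qquad x\in(\bar{x},K).
\]
Note that on any compact subinterval of $(\bar{x},K)$ the numbers $F(x)$ and $F(\widehat{x})$ stay bounded away from $0$, so $H$ is being evaluated away from its singularity and the convergence above is unproblematic.

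Next I would analyse $\chi_0$ on $(\bar{x},K)$. On that interval $F$ is a strictly decreasing bijection onto $(0,\bar{y})$, while $H$ is strictly decreasing on $(0,\bar{y})$ with $H(\bar{y})=0$ and $H(y)\to+\infty$ as $y\to 0^{+}$. Hence $H\circ F$ is strictly increasing on $(\bar{x},K)$, so $\chi_0$ is strictly decreasing there, with $\chi_0(x)\to H(F(\widehat{x}))>0$ as $x\to\bar{x}^{+}$ (using $F(\widehat{x})>\bar{y}$) and $\chi_0(x)\to-\infty$ as $x\to K^{-}$. Therefore $\chi_0$ has a unique zero $\underline{x}\in(\bar{x},K)\subset(\widehat{x},K)$, characterised precisely by $H(F(\underline{x}))=H(F(\widehat{x}))$, i.e.\ by \eqref{def_xlower_1hump}. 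Moreover $\underline{x}$ is the only point of $(\widehat{x},K)$ satisfying \eqref{def_xlower_1hump}: on $(\widehat{x},\bar{x}]$ one has $F(x)\in[\bar{y},F(\widehat{x}))$ and $H$ is strictly increasing on $[\bar{y},F(\widehat{x}))$, so there $H(F(x))<H(F(\widehat{x}))$ and \eqref{def_xlower_1hump} fails.

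To conclude $x_0(c)\to\underline{x}$, fix $\delta>0$ small enough that $[\underline{x}-\delta,\underline{x}+\delta]\subset(\bar{x},K)$. Since $\chi_0$ is strictly decreasing with a zero at $\underline{x}$, we have $\chi_0(\underline{x}-\delta)>0>\chi_0(\underline{x}+\delta)$, so by the pointwise convergence there is $c_\delta>0$ such that $\chi(\underline{x}-\delta,c)>0$ and $\chi(\underline{x}+\delta,c)<0$ for all $c\in(0,c_\delta)$. As $\chi(\cdot,c)$ is continuous in $x$ (from continuous dependence of the fast trajectory $\gamma(\cdot)$ on its parameter), the intermediate value theorem produces a zero of $\chi(\cdot,c)$ in $(\underline{x}-\delta,\underline{x}+\delta)\subset(\widehat{x},K)$; by the uniqueness statement in Theorem~\ref{thm_1hump} this zero equals $x_0(c)$. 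Hence $|x_0(c)-\underline{x}|<\delta$ for all $c\in(0,c_\delta)$, and since $\delta>0$ was arbitrary, $\lim_{c\to 0^{+}}x_0(c)=\underline{x}$.

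The only step requiring genuine care is the appeal to the limiting relations \eqref{x0_limit_1hump} — the identification of the $c\to 0$ limits of the entry and exit heights $y_\alpha(x,c)$ and $y_\omega(x,c)$ — together with the continuity of $\chi(\cdot,c)$ in $x$. Both are elementary given the heteroclinic structure recorded in Section~\ref{sec_intro}; once they are in hand the substance of the argument is simply the monotonicity analysis of $\chi_0$ on $(\bar{x},K)$ combined with the uniqueness of the root already established in Theorem~\ref{thm_1hump}, and there is no serious further obstacle.
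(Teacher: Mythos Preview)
Your proof is correct and follows essentially the same route as the paper: pass to the $c\to 0^+$ limit via \eqref{x0_limit_1hump} to obtain $\chi_0(x)=H(F(\widehat{x}))-H(F(x))$ on $(\bar{x},K)$, analyse the monotonicity of $\chi_0$ to locate its unique zero $\underline{x}$, and conclude. The one minor difference is in the final step: the paper invokes uniform convergence of $\chi(\cdot,c)$ on compact subsets of $(\widehat{x},K)$ together with the a priori bound $x_0(c)\ge\bar{x}$ from Lemma~\ref{lem_xmin}, whereas you use only pointwise convergence at the two endpoints $\underline{x}\pm\delta$ and then appeal to the intermediate value theorem plus the uniqueness of the root from Theorem~\ref{thm_1hump}. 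Your variant is slightly more economical, but the substance is the same.
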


\begin{proof}
By Lemma \ref{lem_xmin},
$x_0(c)\ge \bar{x}$.
By \eqref{x0_limit_1hump}, \beq{chi_limit_1hump}
  \lim_{c\to 0^+}\chi(x,c)
  =H(F(\widehat{x}))-H(F(x))
  \quad\forall\;x\in (\bar{x},K).
\]
Note that (see Figure \ref{fig_H2_HLog}) \beq{monotone_H}
  H'(y)<0\quad\forall\; y\in (0,\bar{y})
  \quad\text{and}\quad
  H'(y)>0\quad\forall\; y\in (\bar{y},\infty).
\]
Since $F(\widehat{x})>\bar{y}$, $F(K)=0$,
and $F(x)$ is strictly decreasing on $(\widehat{x},K)$,
by \eqref{monotone_H}
and the fact that $\lim_{y\to 0^+}H(y)=\infty$,
there is a unique $\underline{x}\in (\widehat{x},K)$ that satisfies \eqref{def_xlower_1hump},
and \beq{diff_HF_1hump}
  H(F(\widehat{x}))-H(F(x))
  \begin{cases}
  >0& \text{if }x\in (\widehat{x},\underline{x}),
  \\
  <0& \text{if }x\in (\underline{x},K).
  \end{cases}
\]
Note that the convergence in \eqref{chi_limit_1hump}
is uniform on any compact subset of $(\widehat{x},K)$,
so \eqref{def_xlower_1hump} follows from \eqref{chi_limit_1hump} and \eqref{diff_HF_1hump}.
\end{proof}

\section{The Case with Two Humps}
\label{sec_2hump}
In this section we study the case that 
the function $F(x)$ has exactly two interior local extremum on the interval $(0,K)$.
That is, 
the prey-isocline
satisfies the two-hump condition (see Figure \ref{fig_labels}{(B)}):
For some $0<\widecheck{x}<\widehat{x}<K$,
\beq{cond_2hump}
  &F'(x)<0\quad\forall\;x\in (0,\widecheck{x})\cup (\widehat{x},K)
  \quad\text{and}\quad
  F'(x)>0\quad\forall\;x\in (\widecheck{x}, \widehat{x}).
\]

In the following theorem,
for technical reasons, we assume in addition that 
\beq{cond_convex}
  F''(x)<0\quad \forall\; x\in (\widehat{x},K).
\]
That is, the prey-isocline is concave
on the right of the interior local maximum.

\begin{theorem}\label{thm_2humps}
Assume \eqref{cond_2hump} and \eqref{cond_convex}.
Then $\chi(x)$ has at most three distinct roots
in the interval $(0,K)$.
More precisely, one of the following holds.
\begin{enumerate}[label=$\mathrm{(\roman*)}$]
\item
$\chi(x)$ has no root in $(0,K)$.
\item
$\chi(x)$ has exactly one root in $(0,K)$, say $x_0$,
and $\lambda(x_0)=0$.
\item
$\chi(x)$ has exactly two distinct zeros in $(0,K)$,
say $x_0<x_1$, \beq{lambda01_ineq}
  \lambda(x_0)\ge 0\ge \lambda(x_1)
  \quad\text{and}\quad
  \lambda(x_0)\ne \lambda(x_1).
\]
\item
$\chi(x)$ has exactly three distinct roots in $(0,K)$,
say $x_0< x_1< x_2$, and \beq{lambda012_ineq}
  \lambda(x_0)
  >\lambda(x_1)=0
  >\lambda(x_2).
\]
\end{enumerate}
\end{theorem}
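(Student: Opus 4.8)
The plan is to analyze the two characteristic functions $\chi$ and $\lambda$ directly as functions of $x_0\in(0,K)$, and to translate the resulting information into the dichotomy (i)--(iv) by combining it with Theorem \ref{thm_y0} and a Poincar\'{e}--Bendixson argument. First I would record the boundary behaviour: under \eqref{cond_2hump} we have $F'<0$ on $(0,\widecheck x)$, so $F(x)<F(0)=\bar y$ there, and \eqref{chi_Ypm} gives $\chi<0$ on $(0,\widecheck x]$; in particular $\chi$ is negative near $x_0=0$. At the other end, $\chi(x)\to-\infty$ as $x_0\to K^-$ by \eqref{chi_limit_K}. If $F\le\bar y$ throughout $(0,K)$ then $\chi<0$ on all of $(0,K)$ by \eqref{chi_Ypm}, which is case (i); so from now on assume $F(\widehat x)>\bar y$, in which case $F-\bar y$ is negative on $(0,\bar x)$, positive on $(\bar x,x^\ast)$, and negative on $(x^\ast,K)$ for uniquely determined $\widecheck x<\bar x<\widehat x<x^\ast<K$.

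The core of the argument is to pin down the sign structure of $\lambda$. Using \eqref{lambda_int_F}, $\lambda<0$ on $(0,\widecheck x]$ because $F'<0$ there. For $x_0\in(\widehat x,K)$ I would split the integral in \eqref{lambda_int_F} at $\widehat x$: the piece over $(\widehat x,x_0)$ has integrand with $F'<0$, hence is negative, and I would use the concavity assumption \eqref{cond_convex} together with the fact that the orbits $\gamma(x_0)$ are nested — so $Y_-(x,\cdot)$ decreases and $Y_+(x,\cdot)$ increases in $x_0$, which makes the weight $\frac{1}{F-Y_-}+\frac{1}{Y_+-F}$ decrease in $x_0$ — to show that this negative piece eventually dominates and that, as $x_0$ runs over $(\widehat x,K)$, the sum changes sign at most once; on $[\widecheck x,\widehat x]$ the extra contribution is positive because $F'>0$. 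The upshot should be that $\lambda$ is negative near both endpoints of $(0,K)$ and changes sign at most twice, i.e.\ there is $[a,b]\subseteq(0,K)$ with $\lambda<0$ on $(0,a)\cup(b,K)$ and $\lambda>0$ on $(a,b)$ (with $a=b$ allowed). Interpreting $\lambda$ as a derivative of $\chi$ (so that a zero of $\chi$ with $\lambda\ne0$ is a simple zero at which $\chi'$ has the same sign as $\lambda$), Rolle's theorem applied to $\chi$ then gives at most three zeros and forces the smallest zero to lie in $[a,b]$ with $\lambda\ge0$, the largest to have $\lambda\le0$, and the sign of $\lambda$ to be non-increasing along the list of zeros.

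To remove the remaining ambiguity — ruling out two or three purely tangential zeros, and placing the forced non-hyperbolic zero in the middle when there are three — I would invoke the geometry of the full system: by Theorem \ref{thm_y0} each zero of $\chi$ with $\lambda\ne0$ produces, for all small $\epsilon>0$, a hyperbolic periodic orbit of \eqref{deq_xy} (stable iff $\lambda<0$), distinct zeros give disjoint nested orbits, and a zero with $\lambda=0$ produces at most a non-hyperbolic (semistable-type) orbit. Since $F'(0)\le0$ under \eqref{cond_2hump}, the positive equilibrium $E_\ast$ is asymptotically stable, and by the uniform boundedness of solutions all trajectories eventually enter a bounded positively invariant region; hence the innermost periodic orbit must repel on its inner side and the outermost must attract on its outer side. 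The Poincar\'{e}--Bendixson theorem (no two nested cycles can both attract, resp.\ both repel, the annulus between them, there being no equilibrium in it) then forces the stability pattern, hence the $\lambda$-pattern, to be exactly (ii), (iii) or (iv).

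The main obstacle is the sign analysis of $\lambda$ in the second step: the concavity estimate on $(\widehat x,K)$ must be made quantitative enough, in combination with the nesting monotonicity of the branches $Y_\pm(x,\cdot)$, to guarantee at most one sign change of $\lambda$ there (and hence exclude a fourth zero of $\chi$), and the behaviour on the middle interval $[\widecheck x,\widehat x]$, where $F'$ changes sign and $\chi$ need not be monotone, also has to be controlled carefully. Everything else is either elementary (the reductions) or routine Poincar\'{e}--Bendixson bookkeeping.
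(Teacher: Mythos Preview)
Your outline has the right ingredients, but the step that carries the weight --- the ``Rolle'' step --- does not stand up. You write that ``interpreting $\lambda$ as a derivative of $\chi$, a zero of $\chi$ with $\lambda\ne0$ is a simple zero at which $\chi'$ has the same sign as $\lambda$''. This is not justified: Remark~1.2 only says the \emph{integrand} of $\lambda$ in \eqref{def_lambda} is the $X$-derivative of the integrand of $\chi$ in the alternative form \eqref{chi_int2}; differentiating $\chi(x_0)$ in $x_0$ brings in the derivatives $y_\alpha'(x_0)$, $y_\omega'(x_0)$ and $\partial X/\partial x_0$, and there is no identity $\chi'=\lambda$ (nor even a sign relation at zeros of $\chi$). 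So Rolle does not apply, and the bound ``at most three zeros'' is not obtained this way. Relatedly, you do not argue that the zero set of $\chi$ is discrete, which is needed before any counting argument and is itself not automatic.

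The paper bypasses the $\chi'$ issue entirely by letting Poincar\'e--Bendixson do the counting, not just the final classification. The key analytic input (Lemma~\ref{lem_lambda12}) is that $\lambda$ is \emph{strictly decreasing} on $(\widehat x,K)$ --- a sharper and in fact easier statement than your ``at most one sign change'' on $(\widehat x,K)$, obtained by splitting $\gamma(x_0)$ into five arcs and differentiating $I_3$ in $x_0$ using \eqref{cond_convex}. Since all roots of $\chi$ lie in $(\bar x,K)$ and, by Lemma~\ref{lem_xmin}(ii), $\lambda>0$ on $(\bar x,\widehat x]$, the values $\lambda(x_j)$ at the ordered roots are strictly decreasing once one of them is $\le 0$. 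Now Theorem~\ref{thm_y0} plus Poincar\'e--Bendixson force $\lambda(x_{j})\lambda(x_{j+1})\le 0$ at consecutive roots (two adjacent periodic orbits with the same nontrivial stability and no equilibrium between them is impossible). These two constraints together rule out accumulation points and four or more roots, and force exactly the sign patterns (ii)--(iv). So the fix is to drop the Rolle heuristic, upgrade your sign analysis on $(\widehat x,K)$ to strict monotonicity of $\lambda$, and move the Poincar\'e--Bendixson argument from the last paragraph to the core of the proof.
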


An implication of Theorem \ref{thm_2humps}
is that the set \eqref{union_Gamma}
is the union of at most three loops of the form $\Gamma(x)$.

This theorem is a consequence of the following lemma.

\begin{figure}[t]
\centering
\begin{tabular}{cc}
\frame
{\includegraphics[trim = 2cm 1.4cm 1cm .3cm, clip, width=.46\textwidth]{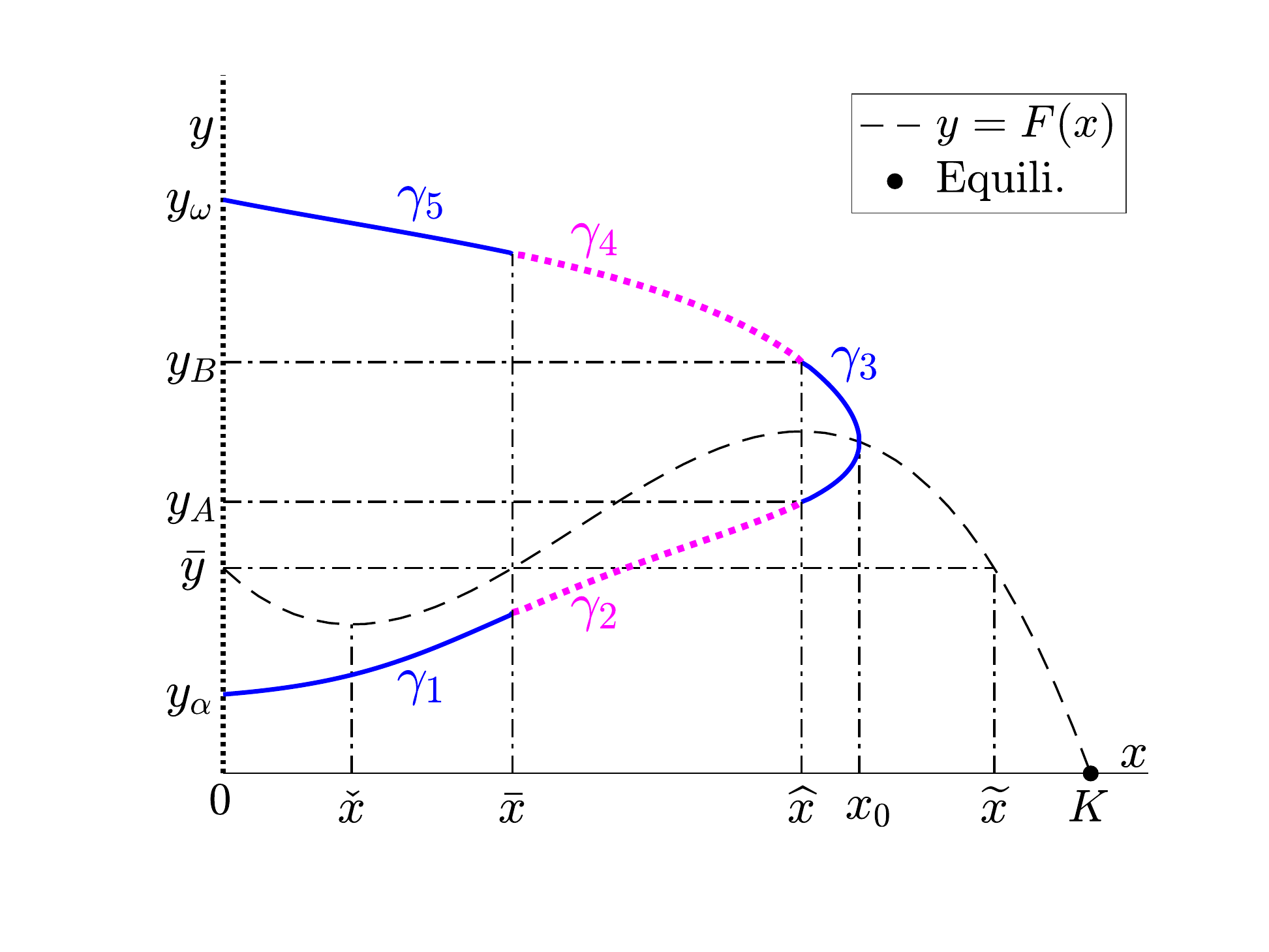}}
&
\frame
{\includegraphics[trim = 2cm 1.4cm 1cm .3cm, clip, width=.46\textwidth]{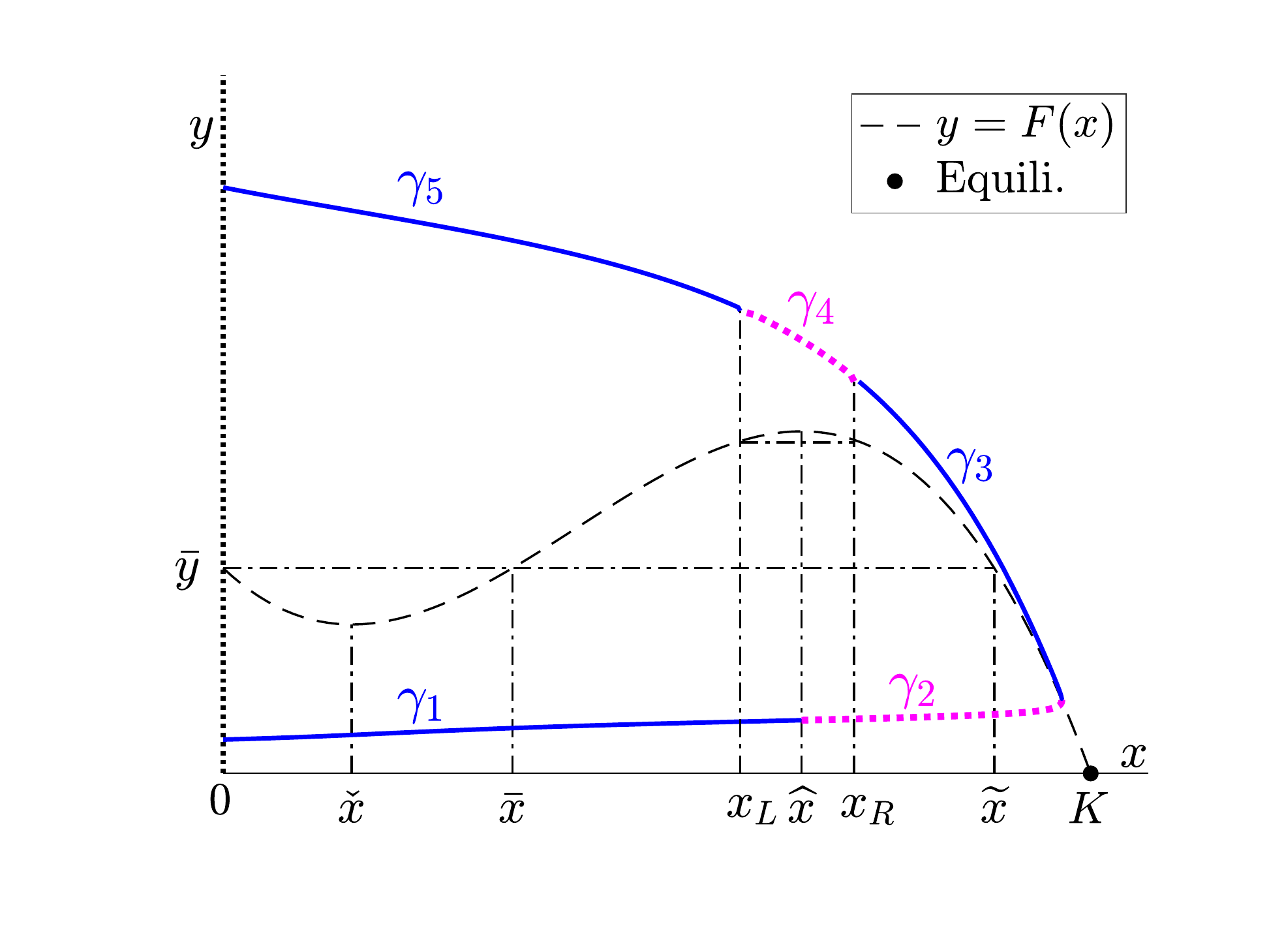}}
\\
(A)
&
(B)
\end{tabular}
\caption{
(A)
$y_A(x_0)$ and $y_B(x_0)$
in the proof of Lemma \ref*{lem_lambda12}
are defined by the intersection of $\Gamma(x_0)$ and 
the vertical line $\{x=\widehat{x}\}$.
(B)
$x_L$ and $x_R$
in the proof of Proposition \ref*{prop_2humps}
are defined by the intersection of the graph of $F(x)$
and a horizontal line
near the climax of the graph.
}
\label{fig_H4_labelsXY}
\end{figure}

\begin{lemma}\label{lem_lambda12}
Assume \eqref{cond_2hump} and \eqref{cond_convex}.
Then $\lambda(x)$ is strictly decreasing on $(\widehat{x},K)$.

In particular,
if $x_0<x_1$ satisfy $\chi(x_0)=\chi(x_1)=0$ and $\lambda(x_0)= 0$,
then $\lambda(x_1)<0$.
\end{lemma}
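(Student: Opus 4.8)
The plan is to establish that $\lambda'(x_0)<0$ for every $x_0\in(\widehat{x},K)$; the ``in particular'' assertion then follows easily. Parametrize $\gamma(x_0)$ by $Y_\pm(\cdot\,;x_0)$ as in \eqref{def_Ypm}, so that \eqref{lambda_int_F} reads
\[
  \lambda(x_0)=\int_0^{x_0}F'(\xi)\,W(\xi;x_0)\,d\xi,\qquad
  W(\xi;x_0):=\frac1{F(\xi)-Y_-(\xi;x_0)}+\frac1{Y_+(\xi;x_0)-F(\xi)}>0 .
\]
Two structural facts are used. First, for $\widehat{x}<x_0<x_1<K$ the nose $(x_0,F(x_0))$ of $\gamma(x_0)$ lies strictly inside the loop bounded by $\gamma(x_1)$ and the relevant segment of the $y$-axis, so $\gamma(x_0)$ is enclosed by $\gamma(x_1)$; consequently $Y_+(\xi;\cdot)$ is strictly increasing and $Y_-(\xi;\cdot)$ strictly decreasing in $x_0$, the functions $y_\alpha(x_0)=Y_-(0;x_0)$ and $y_\omega(x_0)=Y_+(0;x_0)$ are strictly decreasing and strictly increasing respectively (here one uses that the $y$-axis is a line of equilibria of \eqref{fast_xy}, normally hyperbolic away from $y=\bar{y}$, so exactly one orbit converges to each of $(0,y_\alpha)$ and $(0,y_\omega)$), and $\partial_{x_0}W(\xi;x_0)<0$. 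Second, along the branches $\{y=Y_\pm(\xi;x_0)\}$ equation \eqref{fast_xy_normal} gives $\partial_\xi\log Y_\pm(\xi;x_0)=1/(F(\xi)-Y_\pm(\xi;x_0))$, and hence the identity $W(\xi;x_0)=-\partial_\xi\log\bigl(Y_+(\xi;x_0)/Y_-(\xi;x_0)\bigr)$. Writing $G(\xi;x_0):=\partial_{x_0}\log\bigl(Y_+(\xi;x_0)/Y_-(\xi;x_0)\bigr)$, the first fact gives $G>0$ and $\partial_\xi G=-\partial_{x_0}W>0$ on $(0,x_0)$.

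Next, split $\lambda=\lambda_{\mathrm{in}}+\lambda_{\mathrm{out}}$, where $\lambda_{\mathrm{in}}(x_0)=\int_0^{\widehat{x}}F'(\xi)W(\xi;x_0)\,d\xi$ and $\lambda_{\mathrm{out}}(x_0)=\int_{\widehat{x}}^{x_0}F'(\xi)W(\xi;x_0)\,d\xi$, and integrate by parts using the identity above together with $F'(\widehat{x})=0$ and $\log\bigl(Y_+(x_0;x_0)/Y_-(x_0;x_0)\bigr)=0$. This yields
\[
  \lambda_{\mathrm{out}}(x_0)&=\int_{\widehat{x}}^{x_0}F''(\xi)\log\frac{Y_+(\xi;x_0)}{Y_-(\xi;x_0)}\,d\xi,\\
  \lambda_{\mathrm{in}}(x_0)&=F'(0)\log\frac{y_\omega(x_0)}{y_\alpha(x_0)}+\int_0^{\widehat{x}}F''(\xi)\log\frac{Y_+(\xi;x_0)}{Y_-(\xi;x_0)}\,d\xi .
\]
Since $Y_+>Y_-$ and, by \eqref{cond_convex}, $F''<0$ on $(\widehat{x},K)$, differentiating the first of these (the boundary term vanishes, and the integrand $F''G$ is integrable at $\xi=x_0$) gives $\lambda_{\mathrm{out}}'(x_0)=\int_{\widehat{x}}^{x_0}F''(\xi)\,G(\xi;x_0)\,d\xi<0$. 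As $\lambda'=\lambda_{\mathrm{in}}'+\lambda_{\mathrm{out}}'$, it therefore suffices to prove $\lambda_{\mathrm{in}}'(x_0)\le0$, i.e.
\[
  \lambda_{\mathrm{in}}'(x_0)=F'(0)\Bigl(\frac{y_\omega'(x_0)}{y_\omega(x_0)}-\frac{y_\alpha'(x_0)}{y_\alpha(x_0)}\Bigr)+\int_0^{\widehat{x}}F''(\xi)\,G(\xi;x_0)\,d\xi\ \le\ 0,
\]
or, undoing the integration by parts, $\int_0^{\widehat{x}}F'(\xi)\,\partial_{x_0}W(\xi;x_0)\,d\xi\le0$; equivalently, the inner contribution $\lambda_{\mathrm{in}}$ is nonincreasing in $x_0$.

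This last inequality is the main obstacle. On $(0,\widehat{x})$ neither $F'$ nor $F''$ has a fixed sign --- \eqref{cond_2hump} only says $F'<0$ on $(0,\widecheck{x})$ and $F'>0$ on $(\widecheck{x},\widehat{x})$ --- so the two terms in the expression for $\lambda_{\mathrm{in}}'$ genuinely compete; the first is negative because $F'(0)<0$ while $y_\omega'>0>y_\alpha'$. To control the second I would use that the lower (resp.\ upper) inner arc of $\gamma(x_0)$ is precisely the backward (resp.\ forward) orbit of \eqref{fast_xy_normal} issuing from the point $(\widehat{x},Y_-(\widehat{x};x_0))$ (resp.\ $(\widehat{x},Y_+(\widehat{x};x_0))$), so that $\lambda_{\mathrm{in}}(x_0)$ depends on $x_0$ only through the two monotone quantities $Y_\mp(\widehat{x};x_0)$; the problem then reduces to showing that the contribution of a lower inner arc is nondecreasing, and that of an upper inner arc nonincreasing, in its height at $\widehat{x}$. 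For this one exploits that $F$ climbs on $(\widecheck{x},\widehat{x})$ from its interior minimum value back up to $F(\widehat{x})$, together with the inequality $F(\widehat{x})\ge F(0)=\bar{y}$ --- itself forced by Lemma \ref{lem_xmin}(ii), since $F(\widehat{x})<\bar{y}$ would make $F<F(0)$ on all of $(0,K)$ and hence $\chi>0$ on $(0,K)$, contradicting \eqref{chi_limit_K} --- in order to pair each $\xi\in(0,\widecheck{x})$ with the point of $(\widecheck{x},\widehat{x})$ at the same value of $F$ and compare the corresponding distances to the isocline, the monotonicity needed along each branch being supplied by the variational equation for $\partial_{x_0}Y_\pm$ (whose coefficient $F/(F-Y_\pm)^2$ is positive, so $\partial_{x_0}\log Y_+$ and $-\partial_{x_0}\log Y_-$, hence $G$, are increasing along each branch). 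I expect the book-keeping in this comparison to be the most delicate part of the proof.

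Finally, the ``in particular'' clause follows quickly. If $\chi(x_0)=0$ and $\lambda(x_0)=0$, then $x_0\ge\widehat{x}$: indeed every $x_0\in(0,\widecheck{x}]$ has $\chi(x_0)>0$ by Lemma \ref{lem_xmin}(ii), and for $x_0\in(\widecheck{x},\widehat{x})$ either $F<F(0)$ on $(0,x_0)$, again giving $\chi(x_0)>0$, or $F$ has already returned to the level $F(0)$ at some $\bar{x}\in(\widecheck{x},x_0)$, in which case the second part of Lemma \ref{lem_xmin}(ii) (applicable since $F(\widehat{x})\ge\bar{y}$) gives $\lambda(x_0)>0$; both alternatives contradict $\chi(x_0)=\lambda(x_0)=0$. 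Hence any $x_1>x_0$ in $(0,K)$ lies in $(\widehat{x},K)$, and the strict monotonicity of $\lambda$ on $(\widehat{x},K)$, together with continuity of $\lambda$ at $\widehat{x}$ (to cover $x_0=\widehat{x}$), gives $\lambda(x_1)<\lambda(x_0)=0$.
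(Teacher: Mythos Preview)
Your outer contribution $\lambda_{\mathrm{out}}'(x_0)=\int_{\widehat{x}}^{x_0}F''(\xi)\,G(\xi;x_0)\,d\xi<0$ is clean and correct, but the inner contribution is left as a sketch that does not quite close. After the change of variable $u=F(\xi)$ the pairing reduces $\lambda_{\mathrm{in}}'$ to $\int_{\widecheck{y}}^{\bar y}\bigl[\partial_{x_0}W(\xi_R(u))-\partial_{x_0}W(\xi_L(u))\bigr]\,du+\int_{\bar y}^{\widehat y}\partial_{x_0}W(\xi_R(u))\,du$, so what you must prove is $\partial_{x_0}W(\xi_R)<\partial_{x_0}W(\xi_L)$ for paired abscissae $\xi_L\in(0,\widecheck{x})$, $\xi_R\in(\widecheck{x},\bar{x})$. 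Your final clause, however, only records that $G$ is increasing in $\xi$, which is just a restatement of $\partial_{x_0}W<0$. The comparison you actually need follows by writing $\partial_{x_0}W=\partial_{x_0}Y_-/(u-Y_-)^2-\partial_{x_0}Y_+/(Y_+-u)^2$ and observing that each term is more negative at $\xi_R$: the denominators shrink because $Y_-$ increases and $Y_+$ decreases in $\xi$, and the numerators move the right way because the variational equation $\partial_\xi(\partial_{x_0}Y_\pm)=cF\,\partial_{x_0}Y_\pm/(F-Y_\pm)^2$ makes $\partial_{x_0}Y_+$ increase and $\partial_{x_0}Y_-$ decrease in $\xi$. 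You have both ingredients on the table but never assemble them. A second, smaller issue: your justification that $F(\widehat{x})\ge\bar{y}$ is based on the wrong sign---when $F<\bar{y}$ on $(0,K)$ one gets $\chi<0$ everywhere, which is \emph{consistent} with \eqref{chi_limit_K}---so the pairing argument as written covers only the case $F(\widehat{x})\ge\bar{y}$; the paper's proof tacitly makes the same assumption (its point $\bar{x}$ exists only then), and this is harmless for the applications since any root of $\chi$ forces $F(\widehat{x})>\bar{y}$.

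For contrast, the paper does not differentiate $\lambda$ at all: it fixes $\widehat{x}<x_0<x_1$ and shows $\lambda(x_0)>\lambda(x_1)$ by splitting $\gamma$ into five arcs at \emph{both} $\bar{x}$ and $\widehat{x}$. On the two middle arcs ($\bar{x}\le x\le\widehat{x}$) one has $F'>0$, and the nesting of orbits gives the inequality at once. On the right arc ($x\ge\widehat{x}$) one differentiates the $y$-parametrized integral and uses $F'(\widehat{x})=0$ together with \eqref{cond_convex}. On the two left arcs ($x\le\bar{x}$), where $F'$ changes sign at $\widecheck{x}$, the device is to note that the difference quotient $\rho=Y_\pm(\cdot,x_0)-Y_\pm(\cdot,x_1)$ and the functions $Y_\pm(\cdot,x_i)$ are monotone, replace them by their values at $\widecheck{x}$, and observe that the remaining integral $\int_0^{\bar{x}}F'(x)/[(F(x)-c_0)(F(x)-c_1)]\,dx$ is an exact logarithm which vanishes because $F(\bar{x})=F(0)$. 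This decomposition singles out the role of each hypothesis and avoids any need to control $F''$ on $(0,\widehat{x})$, which is what makes your route laborious.
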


\begin{proof}[Proof of Lemma \ref*{lem_lambda12}]
Using the relation $dx/dy= (F(x)-y)/(cy)$ from \eqref{fast_xy}
and the definition of $\lambda(x_0)$ in \eqref{def_lambda}, we have \beq{lambda_int_xy}
  \lambda(x_0)
  =\int_{\gamma(x_0)}\frac{F'(x)}{y}\;dy
  =c\int_{\gamma(x_0)}\frac{F'(x)}{F(x)-y}\;dx.
\]
We split $\gamma=\gamma(x_0)$
into the following five parts (see Figure \ref{fig_H4_labelsXY}{(A)}): \[
  &\gamma_1= \gamma\cap \{x\le \bar{x},\;y<F(x)\},
  \;\;
  \gamma_2= \gamma\cap \{x\in [\bar{x},\widehat{x}],\; y<F(x)\},
  \;\;
  \gamma_3= \gamma\cap \{x\ge \widehat{x}\},
  \\[.5em]
  &\gamma_4= \gamma\cap \{x\in [\bar{x},\widehat{x}],\; y>F(x)\},
  \;\;
  \gamma_5= \gamma\cap \{x\le \bar{x},\;y>F(x)\}.
\]
Then $\lambda(x_0)=c\sum_{j=1}^5 I_j(x_0)$, where \beq{def_Ij_YAB}
  I_j(x_0)
  = \int_{\gamma_j(x_0)} \frac{F'(x)}{cy}\;dy
  = \int_{\gamma_j(x_0)} \frac{F'(x)}{F(x)-y}\;dx.
\]

We parameterize $\gamma(x_0)$,
for $x_0\in (\widehat{x},K)$,
using \eqref{def_Ypm}
with $y=Y_\pm(x,x_0)$, $x\in (0,x_0]$.
Also we parameterize $\gamma(x_0)$ by \[
  \gamma(x_0)=\{(X(y,x_0),y): y\in (y_\alpha(x_0),y_\omega(x_0))\}.
\]
Fix any $x_0$ and $x_1$ with $\widehat{x}<x_0<x_1<K$.
We claim that $I_j(x_1)<I_j(x_0)$, $j=1,\dots,5$.

Note that \[
  Y_-(x,x_1)< Y_-(x,x_0)< F(x)< Y_+(x,x_0)< Y_+(x,x_1).
\]
Since $F'(x)>0$ on $(\bar{x},\widehat{x})$,
it follows that \[
  I_2(x_1)
  = \int_{\bar{x}}^{\widehat{x}}
  \frac{F'(x)}{F(x)-Y_-(x,x_1)}
  \;dx
  < \int_{\bar{x}}^{\widehat{x}}
  \frac{F'(x)}{F(x)-Y_-(x,x_0)}
  \;dx
  = I_2(x_0).
\] Hence $I_2(x_1)< I_2(x_0)$.
Similarly, $I_4(x_1)< I_4(x_0)$.

By \eqref{def_Ij_YAB}, \beq{diff_I1}
  I_1(x_0)- I_1(x_1)
  &=\int_0^{\bar{x}}
    \frac{F'(x)}{F(x)-Y_-(x,x_0)}
    -\frac{F'(x)}{F(x)-Y_-(x,x_1)}
  \;dx
  \\[1em]
  &=\int_0^{\bar{x}}
    \frac{F'(x)(Y_-(x,x_0)-Y_-(x,x_1))}{(F(x)-Y_-(x,x_0))(F(x)-Y_-(x,x_1))}
  \;dx.
\]
The function $\rho(x)\equiv Y_-(x,x_0)- Y_-(x,x_1)$
is increasing on $(0,\bar{x})$ because $\rho(0)>0$ and \[
  \frac{d\rho}{dx}
  &= \frac{Y_-(x,x_0)}{F(x)-Y_-(x,x_0)}
  - \frac{Y_-(x,x_1)}{F(x)-Y_-(x,x_1)}
  \\[1em]
  &= \frac{F(x)\rho}{(F(x)-Y_-(x,x_1))(F(x)-Y_-(x,x_0))}
  >0.
\] Since $\rho(x)$, $Y_-(x,x_0)$ and $Y_-(x,x_1)$ are increasing functions,
and $F'(x)$ changes sign at $x=\widehat{x}$,
\eqref{diff_I1} yields \[
  &I_1(x_0)- I_1(x_1)
  >\int_0^{\bar{x}}
    \frac{F'(x)\rho(\widehat{x})}
    {(F(x)-Y_-(\widehat{x},x_0))(F(x)-Y_-(\widehat{x},x_1))}
  \;dx
  \\[1em]
  &\quad =\rho(\widehat{x})\,
  \log\big(
    (F(x)-Y_-(\widehat{x},x_1))(F(x)-Y_-(\widehat{x},x_0))
  \big)\Big|_{x=0}^{\bar{x}}
  =0.
\] The last equality followed from the fact that $F(\bar{x})=F(0)$.
Hence $I_1(x_0)>I_1(x_1)$.
Similarly, $I_5(x_0)>I_5(x_1)$.

For $\xi\in (\widehat{x},K)$,
define $y_A(\xi)<y_B(\xi)$ to be the values so that both
$(\widehat{x},y_A(\xi))$ and $(\widehat{x},y_B(\xi))$
lie on $\gamma(\xi)$, that is, \[
  y_A(\xi)= Y_-(\widehat{x},\xi),\quad
  y_B(\xi)= Y_+(\widehat{x},\xi).
\]
Differentiating the expression $
  I_3(\xi)= \int_{y_A(\xi)}^{y_B(\xi)} \frac{F'(X(y,\xi))}{c\,y}\;dy
$ with respect to $\xi$, we obtain  \[
  I_3'(\xi)
  =\frac{y_B'(\xi)F'(\widehat{x})}{c\,y_B(\xi)}
  -\frac{y_A'(\xi)F'(\widehat{x})}{c\,y_A(\xi)}
  + \int_{y_A(\xi)}^{y_B(\xi)}
  \frac{F''(X(y,\xi))}{c\,y}\frac{\partial X(y,\xi)}{\partial \xi}\;dy.
\] Since $F'(\widehat{x})=0$,
$F''(x)<0$ and $\partial X(y,\xi)/\partial \xi>0$,
we obtain $I_3'(x_0)<0$ for all $\xi\in (\widehat{x},K)$.
Hence $I_3(x_0)>I_3(x_1)$ for $\widehat{x}>x_0>x_1$.
We conclude that $\lambda(x_0)>\lambda(x_1)$.

Finally, we assume
$x_0<x_1$ satisfy $\chi(x_0)=\chi(x_1)=0$ and $\lambda(x_0)= 0$.
Since $\lambda(x_0)\le 0$,
we have $x_0>\widehat{x}$ by Lemma \ref{lem_xmin}.
From the monotonicity of $\lambda(x)$ on $(\widehat{x},K)$,
it follows that $\lambda(x_1)<\lambda(x_0)=0$.
\end{proof}

\begin{proof}[Proof of Theorem \ref*{thm_2humps}]
First we claim that
the set of roots of $\chi$ in the interval $(0,K)$ is a discrete set.
Suppose $x_0$ is an accumulation point of the set of roots of $\chi$.
If $\lambda(x_0)>0$,
then there exists $x_1\ne x_0$, $\chi(x_1)=0$ such that \[
  \lambda(x)>0
  \quad\text{$\forall\; x$ between $x_0$ and $x_1$}.
\]
Then, by Theorem \ref{thm_y0},
for any sufficiently small $\epsilon>0$
there are two unstable periodic orbits,
with one of them near $\Gamma(x_0)$ and the other near $\Gamma(x_1)$,
and no orbitally stable periodic orbit lies between them.
This contradicts the Poincar\'{e}-Bendixon Theorem.
The case $\lambda(x_0)<0$ can be treated similarly.
If $\lambda(x_0)= 0$,
then $x_0> \widehat{x}$ by Lemma \ref{lem_xmin}.
Since $x_0$ is an accumulation point,
from one of the two intervals, $(\widehat{x},x_0)$ and $(x_0,K)$,
we can choose two roots $x_1<x_2$ of $\chi$.
In either case,
by the monotonicity of $\lambda(x)$ on $(\widehat{x},K)$ given in Lemma \ref{lem_lambda12},
$\lambda(x)$ is nonzero
and does not change sign on the interval $(x_1,x_2)$.
By Theorem \ref{thm_y0}, this contradicts the Poincar\'{e}-Bendixon Theorem.

Suppose $\chi$ has four consecutive distinct roots, say $x_0<x_1<x_2<x_3$.
If $\lambda(x_0)>0$, then $\lambda(x_1)\le 0$
because of the stability of periodic orbits.
By Lemma \ref{lem_lambda12} it follows that \[
  \lambda(x_0)
  >0
  \ge \lambda(x_1)
  > \lambda(x_2)
  > \lambda(x_3),
  \quad
  \lambda(x_i)\lambda(x_{i+1})\le 0
  \quad \forall\;i,
\] which is impossible.
Similarly, if $\lambda(x_0)\le 0$,
by Lemma \ref{lem_lambda12} we have \[
  0
  \ge \lambda(x_0)
  > \lambda(x_1)
  > \lambda(x_2)
  > \lambda(x_3),
  \quad
  \lambda(x_i)\lambda(x_{i+1})\le 0
  \quad \forall\;i,
\] which is also impossible.
Hence there are at most three distinct roots.

Since $\chi$ has at most three distinct roots in $(0,K)$,
and the values of $\lambda$ cannot have the same sign
at two consecutive roots of $\chi$,
by Lemma \ref{lem_lambda12} and the fact that $E_*$ is locally asymptotically stable,
only the situations listed in the theorem are possible.
\end{proof}

Under the two-hump condition \eqref{cond_2hump},
it is easy to show, as illustrated in Figure \ref{fig_c0}{(B)}, that
\beq{x0_limit_2humps}
  y_\alpha(x,c)
  \to
  \begin{cases}
    F(\widecheck{x}),& \text{if }x\in [\widecheck{x},\widetilde{x}]\\
    F(x),& \text{otherwise}
  \end{cases}
  \quad\text{and}\quad
  y_\omega(x,c)
  \to\begin{cases}
    \bar{y},& \text{if }x\in (0,\bar{x}]\\
    F(x),& \text{if }x\in [\bar{x},\widehat{x}]\\
    F(\widehat{x}),& \text{if }x\in [\widehat{x},K)
  \end{cases}
\]
as $c\to 0$,
where $\widetilde{x}\in (\widehat{x},K)$
satisfies $F(\widetilde{x})=F(\widecheck{x})$,
and the convergence is $C^1$-uniform
on any compact subset of $(0,K)\setminus \{\widecheck{x},\bar{x},\widehat{x},\widetilde{x}\}$.

The following proposition describes
the roots of $\chi(\cdot,c)$ when $c$ is sufficiently small.

\begin{figure}[t]
\centering
\begin{tabular}{cc}
\frame
{\includegraphics[trim = 1.2cm .3cm 1cm 0cm, clip, width=.46\textwidth]{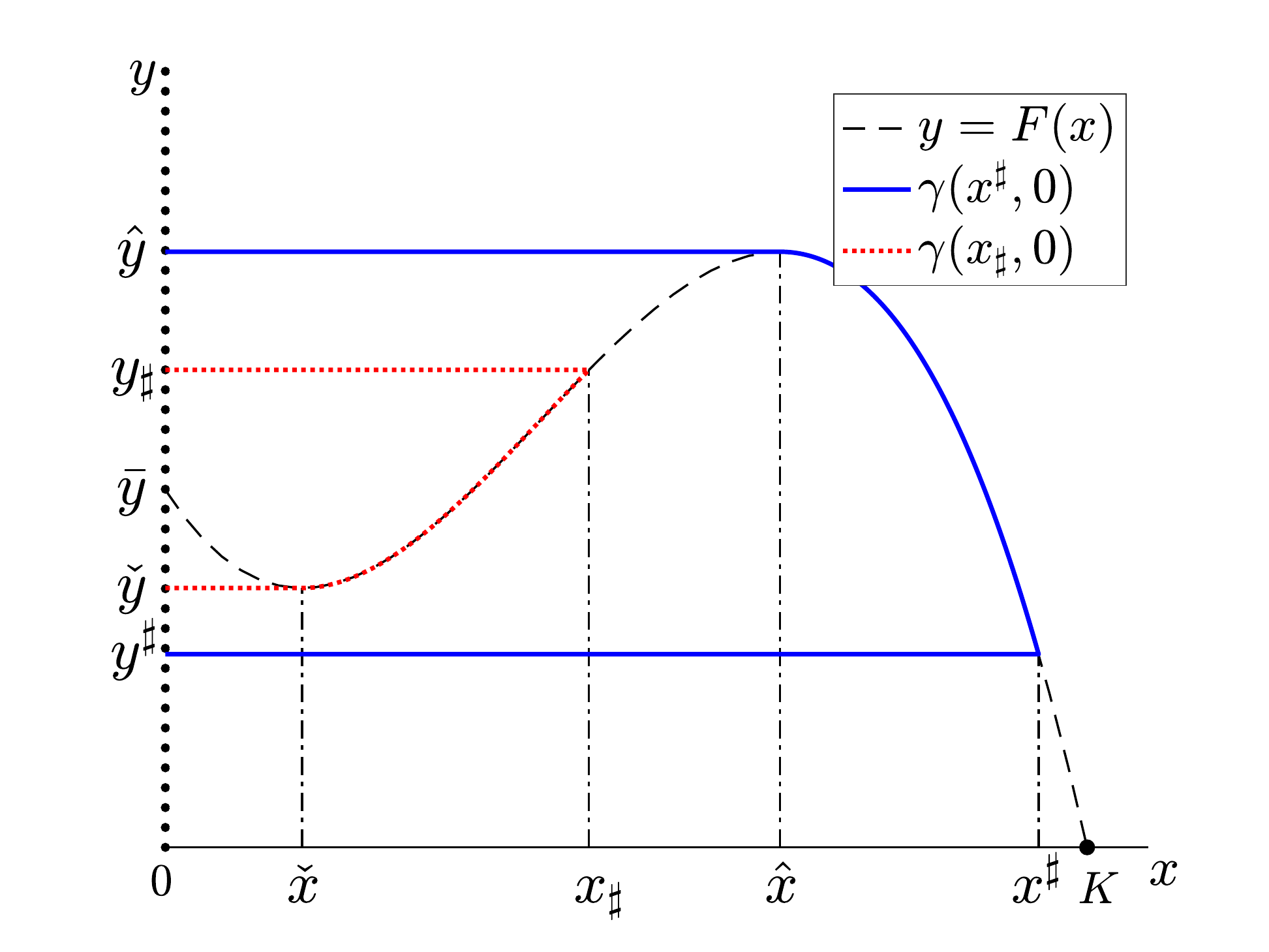}}
&
\frame
{\includegraphics[trim = 1.2cm .3cm 1cm 0cm, clip, width=.46\textwidth]{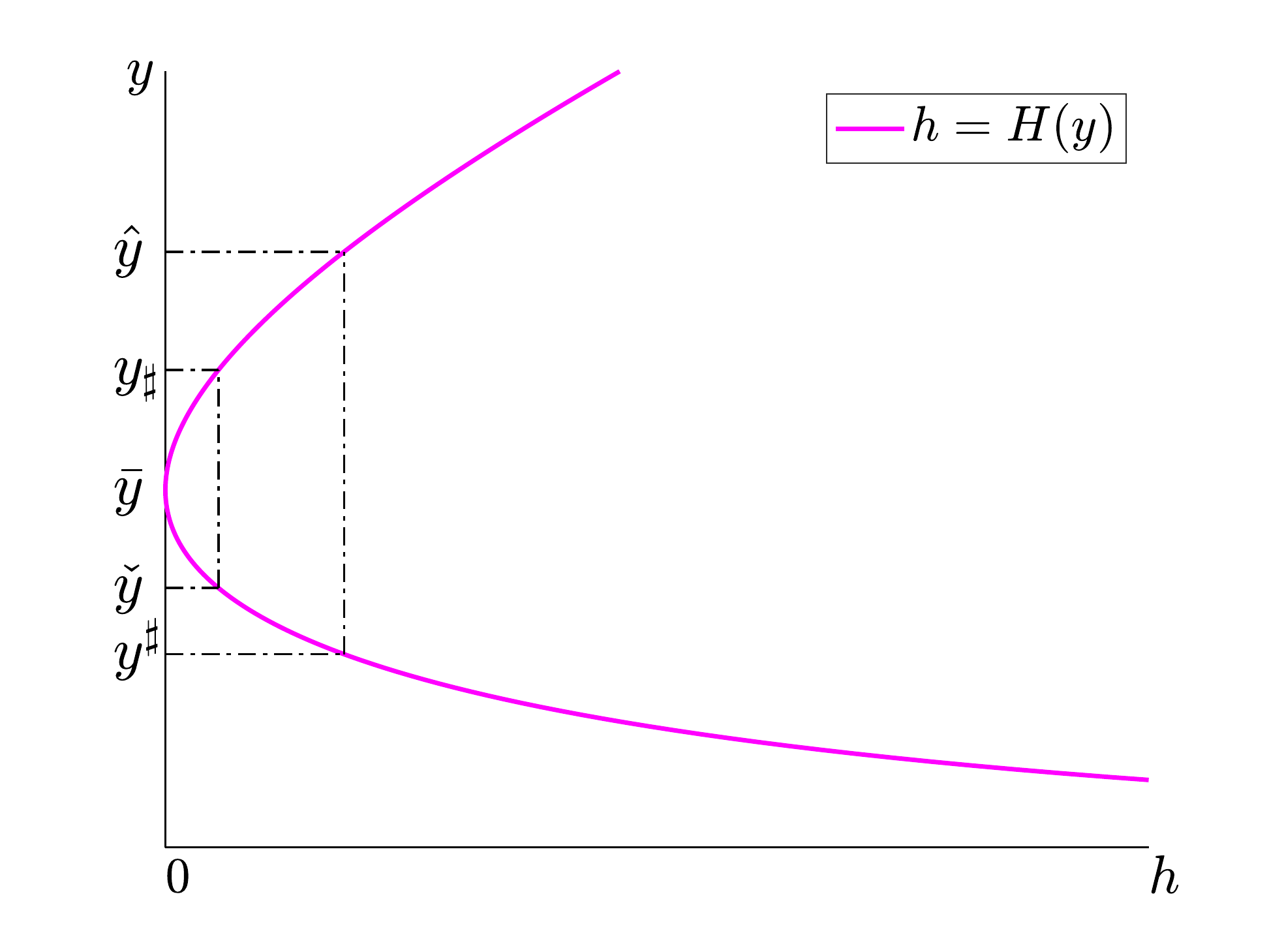}}
\end{tabular}
\caption{
$\gamma(x,0)$ is the limit of $\gamma(x,c)$ as $c\to 0^+$.
The values $x_\sharp$ and $x^\sharp$
in Proposition \ref{prop_2humps}
are determined by $F(x_\sharp)=y_\sharp$
and $F(x^\sharp)=y^\sharp$,
with $H(\widecheck{y})= H(y_\sharp)$
and $H(\widehat{y})= H(y^\sharp)$.
}
\label{fig_H4_HLog}
\end{figure}

\begin{proposition}\label{prop_2humps}
Assume the two-hump condition \eqref{cond_2hump}.
\begin{enumerate}[label=$\mathrm{(\roman*)}$]
\item
If $H(F(\widehat{x}))< H(F(\widecheck{x}))$,
then, for any sufficiently small $c>0$,
$\chi(\cdot,c)$ has no root in the interval $(0,K)$.
\item
If $H(F(\widehat{x}))> H(F(\widecheck{x}))$,
then,
for any sufficiently small $c>0$,
$\chi(\cdot,c)$ has exactly two distinct roots in the interval $(0,K)$,
say $x_0(c)< x_1(c)$.
Moreover, $\lambda(x_0(c))>0$, $\lambda(x_1(c))<0$, and \beq{lim_x01c}
  \lim_{c\to 0^+}x_{0}(c)= x_\sharp,\qquad
  \lim_{c\to 0^+}x_{1}(c)= x^\sharp,
\] where $x_\sharp\in (\widecheck{x},\widehat{x})$
and $x^\sharp\in (\widehat{x},K)$ satisfy \beq{def_xsharp}
  H(F(x_\sharp))= H(F(\widecheck{x})),
  \quad
  H(F(x^\sharp))=H(F(\widehat{x})).
\]
\end{enumerate}
\end{proposition}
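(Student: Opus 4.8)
The plan is to pass to the limit $c\to 0^+$ inside $\chi$. Writing $\chi(x)=H(y_\omega(x))-H(y_\alpha(x))$ and using \eqref{x0_limit_2humps}, set $\chi_0(x):=\lim_{c\to 0^+}\chi(x,c)=H(y_\omega(x,0))-H(y_\alpha(x,0))$, the convergence being uniform on compact subsets of $(0,K)$ (with only the $C^1$-convergence degenerating at the four points $\widecheck x,\bar x,\widehat x,\widetilde x$). I would first dispose of the degenerate subcase $F(\widehat x)\le\bar y$: then $F\le F(0)$ on $(0,K)$ but not identically, so the integrand of the representation \eqref{chi_Ypm} of $\chi$ used to prove Lemma \ref{lem_xmin} is $\le 0$ and nonzero, whence $\chi(x,c)<0$ on $(0,K)$ for every $c>0$ (no root), while $F(\widecheck x)<F(\widehat x)\le\bar y$ and the monotonicity of $H$ on $(0,\bar y)$ give $H(F(\widehat x))<H(F(\widecheck x))$, so this subcase falls under (i). Henceforth assume $F(\widehat x)>\bar y$, so that $\bar x\in(\widecheck x,\widehat x)$ with $F(\bar x)=F(0)$ and $\widetilde x\in(\widehat x,K)$ with $F(\widetilde x)=F(\widecheck x)$ exist, \eqref{x0_limit_2humps} applies, and $\chi(x,c)<0$ on $(0,\bar x]$ for all $c>0$ by Lemma \ref{lem_xmin}. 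Substituting \eqref{x0_limit_2humps} gives, on $(\bar x,K)$: $\chi_0(x)=H(F(x))-H(F(\widecheck x))$ on $(\bar x,\widehat x)$, strictly increasing (as $F>\bar y$, $F'>0$ there) from $-H(F(\widecheck x))<0$ to $H(F(\widehat x))-H(F(\widecheck x))$; $\chi_0\equiv H(F(\widehat x))-H(F(\widecheck x))$ on $[\widehat x,\widetilde x]$; and $\chi_0(x)=H(F(\widehat x))-H(F(x))$ on $(\widetilde x,K)$, strictly decreasing (as $F<\bar y$, $F'<0$ there) to $-\infty$ (cf.\ \eqref{chi_limit_K}). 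The sign of the constant $H(F(\widehat x))-H(F(\widecheck x))$ is exactly the dichotomy of the statement.

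In case (i) this constant is negative, so $\chi_0\le H(F(\widehat x))-H(F(\widecheck x))<0$ on all of $[\bar x,K)$; together with $\chi(\cdot,c)<0$ on $(0,\bar x]$, the uniform convergence on compacta, and $\chi(x,c)\to-\infty$ as $x\to K^-$ uniformly for small $c$ (there $y_\alpha(x,c)\approx F(x)\to 0$ while $y_\omega(x,c)$ stays bounded), this forces $\chi(\cdot,c)<0$ on $(0,K)$ for all small $c$, i.e.\ no root.

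In case (ii) the constant is positive, so $\chi_0$ has exactly two zeros in $(\bar x,K)$: a transversal crossing $x_\sharp\in(\bar x,\widehat x)\subset(\widecheck x,\widehat x)$ with $\chi_0'(x_\sharp)>0$, and $x^\sharp\in(\widetilde x,K)\subset(\widehat x,K)$ with $\chi_0'(x^\sharp)<0$, both satisfying \eqref{def_xsharp}, with $\chi_0>0$ on $(x_\sharp,x^\sharp)$, $\chi_0<0$ on $(0,x_\sharp)\cup(x^\sharp,K)$, and both at positive distance from $\widecheck x,\bar x,\widehat x,\widetilde x$. On small fixed neighborhoods of $x_\sharp$ and $x^\sharp$ the convergence is $C^1$, so for small $c$ the function $\chi(\cdot,c)$ is strictly monotone there with a sign change, producing a unique root $x_0(c)$ near $x_\sharp$ and a unique root $x_1(c)$ near $x^\sharp$, with $\partial_x\chi(x_0(c),c)>0>\partial_x\chi(x_1(c),c)$, $x_0(c)<x_1(c)$, and $x_0(c)\to x_\sharp$, $x_1(c)\to x^\sharp$ (so \eqref{lim_x01c}). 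No other roots occur: off these two neighborhoods $\chi_0$ vanishes only as $x\to 0^+$ and $x\to K^-$; near $0$, near $\widecheck x$, and near $\bar x$ one has $\chi(\cdot,c)<0$ (the first two by Lemma \ref{lem_xmin}, the third because $\chi_0(\bar x)=-H(F(\widecheck x))<0$ and the convergence is uniform there); near $\widehat x$ and $\widetilde x$ one has $\chi_0=H(F(\widehat x))-H(F(\widecheck x))>0$, so $\chi(\cdot,c)>0$ there; near $K$, $\chi(\cdot,c)<0$ as above; and on the remaining compacta $\chi_0$ is bounded away from $0$. Hence for all small $c$ every zero of $\chi(\cdot,c)$ lies in one of the two neighborhoods, so $\chi(\cdot,c)$ has exactly the two roots $x_0(c)<x_1(c)$. (If \eqref{cond_convex} is also assumed, Theorem \ref{thm_2humps} already bounds the number of roots by three and this no-extra-roots step can be shortened.)

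Finally, the signs of $\lambda$. Since $x_0(c)\to x_\sharp\in(\bar x,\widehat x)$, for small $c$ we have $\widecheck x<\bar x<x_0(c)<\widehat x$, so the ``valley'' configuration \eqref{cond_F_valley} holds on $(\widecheck x,x_0(c))$ together with $F(\bar x)=F(0)$, and Lemma \ref{lem_xmin}(ii) gives $\lambda(x_0(c))>0$. For $x_1(c)$ I would pass to the limit in \eqref{def_lambda}: as $c\to 0$ the orbit $\gamma(x^\sharp)$ degenerates into two nearly horizontal segments (at heights $F(x^\sharp)$ and $F(\widehat x)$) joined by the arc of $\gamma$ that clings to the decreasing branch $\{F'<0\}$ of $F$ from $x=\widehat x$ out to $x=x^\sharp$; along the horizontal segments $dy=O(c)$ so they contribute $o(1)$ to the integral, while along the clinging arc $X(y,x^\sharp)$ runs through $(\widehat x,x^\sharp)$, where $F'<0$ and $y$ increases, contributing a negative quantity bounded away from $0$; hence $\lambda(x_1(c))\to\lambda(x^\sharp,0)<0$, so $\lambda(x_1(c))<0$ for small $c$. (Under \eqref{cond_convex} this agrees with case (iii) of Theorem \ref{thm_2humps}, which already forces $\lambda(x_0(c))\ge 0\ge\lambda(x_1(c))$.) The main obstacle is the two points where only $C^0$ control is available: one must verify that $\chi(\cdot,c)\to\chi_0$ locally uniformly on all of $(0,K)$ so as to exclude spurious roots near $\widehat x$, $\widetilde x$, and $\bar x$, and one must make the degeneration of $\gamma(x^\sharp)$ quantitative, controlling the boundary-layer contributions to \eqref{def_lambda} near the turning point $(x^\sharp,F(x^\sharp))$ and near the fold $(\widehat x,F(\widehat x))$ --- a standard but delicate geometric singular perturbation estimate.
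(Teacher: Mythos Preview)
Your approach is essentially the same as the paper's for the analysis of $\chi$: both compute the pointwise limit $\chi_0$ from \eqref{x0_limit_2humps}, identify its zeros, and use $C^1$-convergence away from the four special points together with the implicit function theorem to locate the two roots and rule out others. You are actually more careful than the paper in one respect: you explicitly dispose of the degenerate subcase $F(\widehat x)\le\bar y$, which the paper handles only implicitly (its use of $\bar x$ presupposes $F(\widehat x)>\bar y$). For $\lambda(x_0(c))>0$ you and the paper both invoke Lemma~\ref{lem_xmin}(ii).

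The one genuine methodological difference is the argument for $\lambda(x_1(c))<0$. The paper works with the $dx$-representation $\lambda=c\sum_{j=1}^5 I_j$ from \eqref{lambda_int_xy}, introduces auxiliary points $x_L<\widehat x<x_R$ with $F(x_L)=F(x_R)$, and shows $I_1,I_5=O(1)$, $I_2,I_4<0$, and $I_3\to-\infty$ (using only that $Y_+(x_R,c)-F(x_R)\to 0$ at the single point $x_R$); hence $\lambda/c\to-\infty$, so $\lambda<0$. Your route via the $dy$-representation---horizontal segments contribute $o(1)$ because their $y$-extent shrinks, while the clinging arc gives a negative contribution bounded away from zero---is correct and in fact yields the sharper conclusion $\lambda(x_1(c))\to-\int_{\widehat x}^{x^\sharp}(F'(x))^2/F(x)\,dx<0$. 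Two small caveats: your ``$dy=O(c)$'' on the horizontal parts should be weakened to $o(1)$ (the boundary layer near the turning point $(x_1(c),F(x_1(c)))$ is not $O(c)$-thin in $y$), and bounding the clinging contribution away from zero needs the pointwise convergence $X(y,c)\to F^{-1}(y)$ on compact subsets of $(F(x^\sharp),F(\widehat x))$, which you correctly flag as the delicate step. The paper's $dx$-argument is slightly more economical here because it only needs the clinging behavior at one point; your $dy$-argument trades that economy for a more explicit limiting value of $\lambda$.
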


Note that Proposition \ref{prop_2humps}
does not assume condition \eqref{cond_convex}.

\begin{proof}
By Lemma \ref{lem_xmin}, no root of $\chi$ lies in the interval $(0,\bar{x})$,
so we only need to estimate $\chi(x,c)$ for $x\in (\bar{x},K)$.
By \eqref{x0_limit_2humps}, \beq{limit_chi_2hump}
  \lim_{c\to 0^+}\chi(x,c)
  =\begin{cases}
    H(F(x))-H(F(\widecheck{x}))& \text{if }x\in (\bar{x},\widehat{x}),\\
    H(F(\widehat{x}))-H(F(\widecheck{x}))& \text{if }x\in (\widehat{x},\widetilde{x}),\\
    F(\widehat{x})-F(x)& \text{if }x\in (\widetilde{x},K),
  \end{cases}
\]
and the convergence is $C^1$-uniform
on any compact subset of $(\bar{x},K)\setminus \{\widehat{x},\widetilde{x}\}$.

By \eqref{cond_2hump} and \eqref{monotone_H},
we have $H(F(x))< H(F(\widehat{x}))$ $\forall\; x\in (\bar{x},\widehat{x})$
and $H(F(x))> H(F(\widecheck{x}))$ $\forall\; x\in (\widecheck{x},K)$.
If $H(F(\widehat{x}))< H(F(\widecheck{x}))$,
then the right-hand side of \eqref{limit_chi_2hump}
is bounded away from $0$ on $(\bar{x},K)$.
It is easy to show that $\chi(x,c)$ is uniformly bounded away from $0$, as $c\to 0$,
for $x\in (\bar{x},K)$.
Hence $\chi(\cdot,c)$ has no root in $(0,K)$.

If $H(F(\widehat{x}))> H(F(\widecheck{x}))$,
by \eqref{cond_2hump} and \eqref{monotone_H},
there exist a unique $x_\sharp\in (\widecheck{x},\widehat{x})$
and a unique $x^\sharp\in (\widehat{x},K)$
that satisfy \eqref{def_xsharp}.
Choose $\delta>0$ such that 
$x_\sharp\in (\hat{x}+\delta,\widehat{x}-\delta)$
and $x^\sharp\in (\widehat{x}+\delta,K-\delta)$.
Then the right-hand side of \eqref{limit_chi_2hump}
is bounded away from $0$ on the set \[
  {}[\bar{x},\bar{x}+\delta]\cup [\widehat{x}-\delta,\widehat{x}+\delta]\cup [K-\delta,K).
\]
Hence $\chi(x,c)$ is uniformly bounded away from $0$, as $c\to 0^+$,
for $x$ on this set.

On the other hand, 
we have the uniform convergence \[
  \lim_{c\to 0^+}\frac{\partial\chi(x,c)}{\partial x}
  =\begin{cases}
    H'(F(x))F'(x)& \text{if }x\in [\bar{x}+\delta,\widehat{x}-\delta],\\
    -H(F(x))F'(x)& \text{if }x\in [\widehat{x}+\delta,K-\delta].
  \end{cases}
\]
Therefore, by \eqref{cond_2hump} and \eqref{monotone_H},
$\partial\chi(x,c)/\partial x$
is uniformly bounded away from $0$, as $c\to 0^+$,
for $x\in [\bar{x}+\delta,\widehat{x}-\delta]\cup [\widehat{x}+\delta,K-\delta]$.
From the implicit function theorem
it follows that $\chi(\cdot,c)$ has exactly roots,
$x_0(c)\in [\bar{x}+\delta,\widehat{x}-\delta]$
and $x_1(c)\in [\widehat{x}+\delta,K-\delta]$,
and \eqref{lim_x01c} holds.

Since $x_0(c)\in (\bar{x},\widehat{x})$,
by Lemma \ref{lem_xmin} we have $\lambda(x_0(c))>0$.
It remains to show $\lambda(x_1(c))<0$.
Choose any $x_L\in (\widehat{x}-\delta,\widehat{x})$
and $x_R\in (\widehat{x},\widehat{x}+\delta)$
so that $F(x_L)=F(x_R)>\bar{y}$.
We split $\gamma=\gamma(x_1(c))$
into the following five parts (see Figure \ref{fig_H4_labelsXY}{(B)}): \[
  &\gamma_1= \gamma\cap \{x\le \widehat{x},\;y<F(x)\},
  \;\;
  \gamma_2= \gamma\cap \{x\ge \widehat{x},\; y<F(x)\},
  \\[.5em]
  &\gamma_3= \gamma\cap \{x\ge x_R,\; y>F(x)\},
  \;\;
  \gamma_4= \gamma\cap \{x_L\le x\le x_R,\; y>F(x)\},
  \\[.5em]
  &\gamma_5= \gamma\cap \{x\le x_L,\; y>F(x)\}.
\]
By \eqref{lambda_int_xy},
$\lambda(x_1(c))=c\sum_{j=1}^5 I_j(c)$, where \[
  I_j(c)
  =\int_{\gamma_j(x_1(c))}\frac{F'(x)}{c\,y}\;dy
  =\int_{\gamma_j(x_1(c))}\frac{F'(x)}{F(x)-y}\;dx.
\]

Since $y=F(x^\sharp)+o(1)$ on $\gamma_1$, we have $I_1=O(1)$ as $c\to 0^+$.
Similarly, $I_5=O(1)$.

Since $F'(x)<0$ and $F(x)-y>0$ on $\gamma_2$,
we have $I_2<0$.

We parameterize $\gamma_3\cup \gamma_4$ by $y=Y_+(x,c)$, $x\in [x_L,x_1(c)]$.
Then $Y_+(x,c)$ is a deceasing function of $x$
that approaches $F(x)$ as $c\to 0^+$.
Hecne \[
  &I_4= \int_{x_L}^{x_R}
  \frac{F'(x)}{Y_+(x,c)-F(x)}\;dx
  \\[.5em]
  &\quad< \int_{x_L}^{x_R}
  \frac{F'(x)}{Y_+(\widehat{x},c)-F(x)}\;dx
  = \log\left(
    \frac{Y_+(\widehat{x},c)-F(x_R)}
    {Y_+(\widehat{x},c)-F(x_L)}
  \right)= 0.
\]
Since  $F'(x)<0$, $F(x)-Y_+(x,c)<0$, and $F(x)-Y_+(x,c)\to 0$ on $\gamma_3$, \[
  I_3
  &=\int_{x_R}^{x_1(c)}
    \frac{F'(x)}{Y_+(x,c)-F(x)}
  \;dx
  < \int_{x_R}^{x_R+\delta} 
    \frac{F'(x)}{Y_+(x,c)-F(x)}
  \;dx
  \\[.5em]
  &\quad
  <\frac{F'(x_R)\delta}{Y_+(x_R,c)-F(x_R)}
  \to -\infty.
\]
Therefore,
$\lim_{c\to 0^+}\frac{\lambda(x_1(c))}{c}=\lim_{c\to 0^+}\sum_{j=1}^5 I_j(c)= -\infty$.
It follows that $\lambda(x_1(c))<0$ for all sufficiently small $c>0$.
\end{proof}

The following example is an application of Theorem \ref{thm_2humps} and Proposition \ref{prop_2humps}.
Note that statement (i) in this example is covered by Ruan and Xiao \cite{Ruan:2001}.

\begin{ex}\label{ex_H4}
Consider \eqref{deq_xy} with
the Holling type IV functional response \beq{pq_H4}
  p(x)=\frac{mx}{ax^2+1},
\]
with $m,a>0$.
Then there exists $\kappa_*> 4$ such that the following statements hold.
\begin{enumerate}[label=$\mathrm{(\roman*)}$]
\item
If $aK^2< 4$,
then, for any fixed $c>0$,
the positive equilibrium of \eqref{deq_xy}
is globally asymptotically stable
for all sufficiently small $\epsilon>0$.
\item
If $4\le aK^2<\kappa_*$,
then for any small enough $c>0$,
the positive equilibrium of \eqref{deq_xy}
is globally asymptotically stable
for all sufficiently small $\epsilon>0$.
\item
If $aK^2>\kappa_*$,
then for any small enough $c>0$,
\eqref{deq_xy} has exactly two periodic orbits
for all sufficiently small $\epsilon>0$.
These periodic orbits
form two relaxation oscillations.
\end{enumerate}
\end{ex}

See Figures \ref{fig_H4_positive} and \ref{fig_H4_negative}
for illustrations of Example \ref{ex_H4}.

\begin{remark}
If we fix $a>0$
and regard $K$ as a bifurcation parameter
in Example \ref{ex_H4},
then $K_*=\sqrt{\kappa_*/a}$
is a threshold value of $K$
to determine the dynamics of \eqref{deq_xy}.
\end{remark}

\begin{remark}
Example \ref{ex_H4}
provides no comparison of
the smallness of $c$ and $\epsilon$.
Nonetheless,
since canard cycles have been discovered by Li and Zhu \cite{Zhu:2013}
in some cases where $c=\frac{\epsilon}{p(\widehat{x})}+o(\epsilon)$ as $\epsilon\to 0$,
it is conceivable that
the region
in $(c,\epsilon)$-space
for statement (iii) to be valid
satisfies $\epsilon=O(c)$ as $c\to 0$.
\end{remark}

\begin{figure}[t]
\centering
\begin{tabular}{p{.63\textwidth}p{.3\textwidth}}
\vspace{0em}
\begin{tabular}{c}
\frame
{\includegraphics[trim = 1cm .5cm 1cm 0cm, clip, width=.59\textwidth]{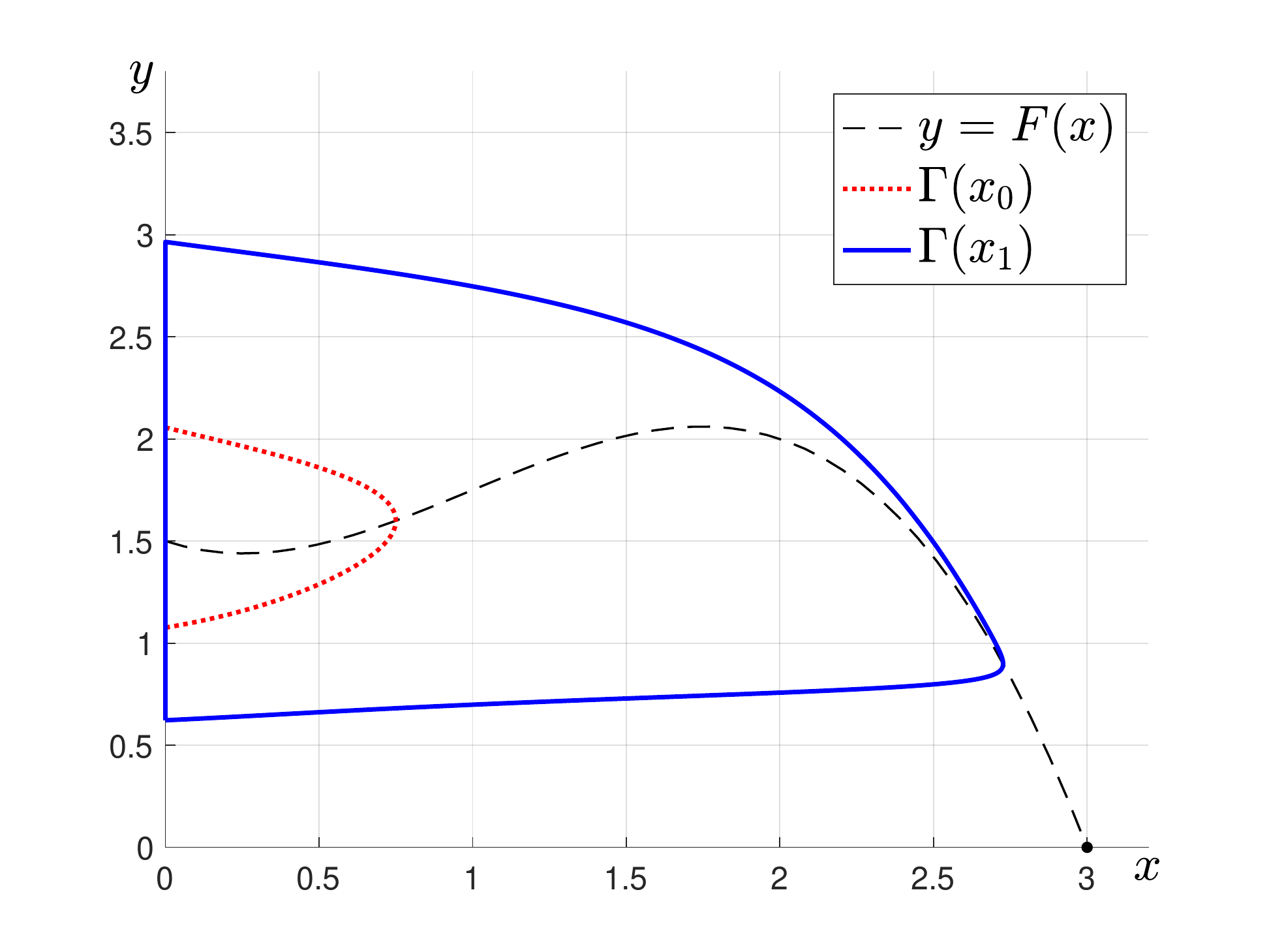}}
\end{tabular}
&
\vspace{0em}
\hspace{-2em}
\begin{tabular}{c}
\frame
{\includegraphics[trim = 1cm .3cm 1cm 0cm, clip, width=.28\textwidth]{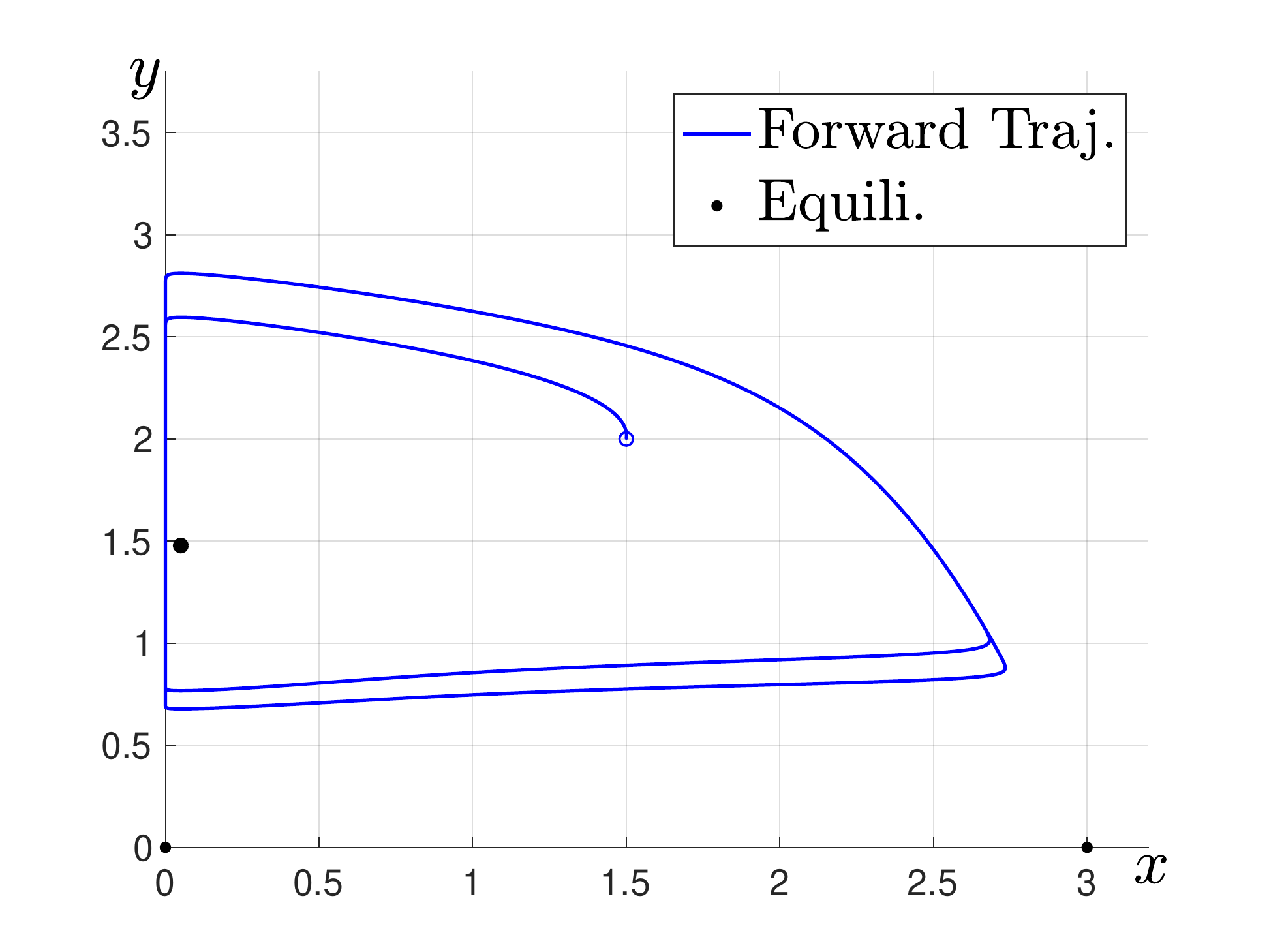}}
\\[.26em]
\frame
{\includegraphics[trim = 1cm .3cm 1cm 0cm, clip, width=.28\textwidth]{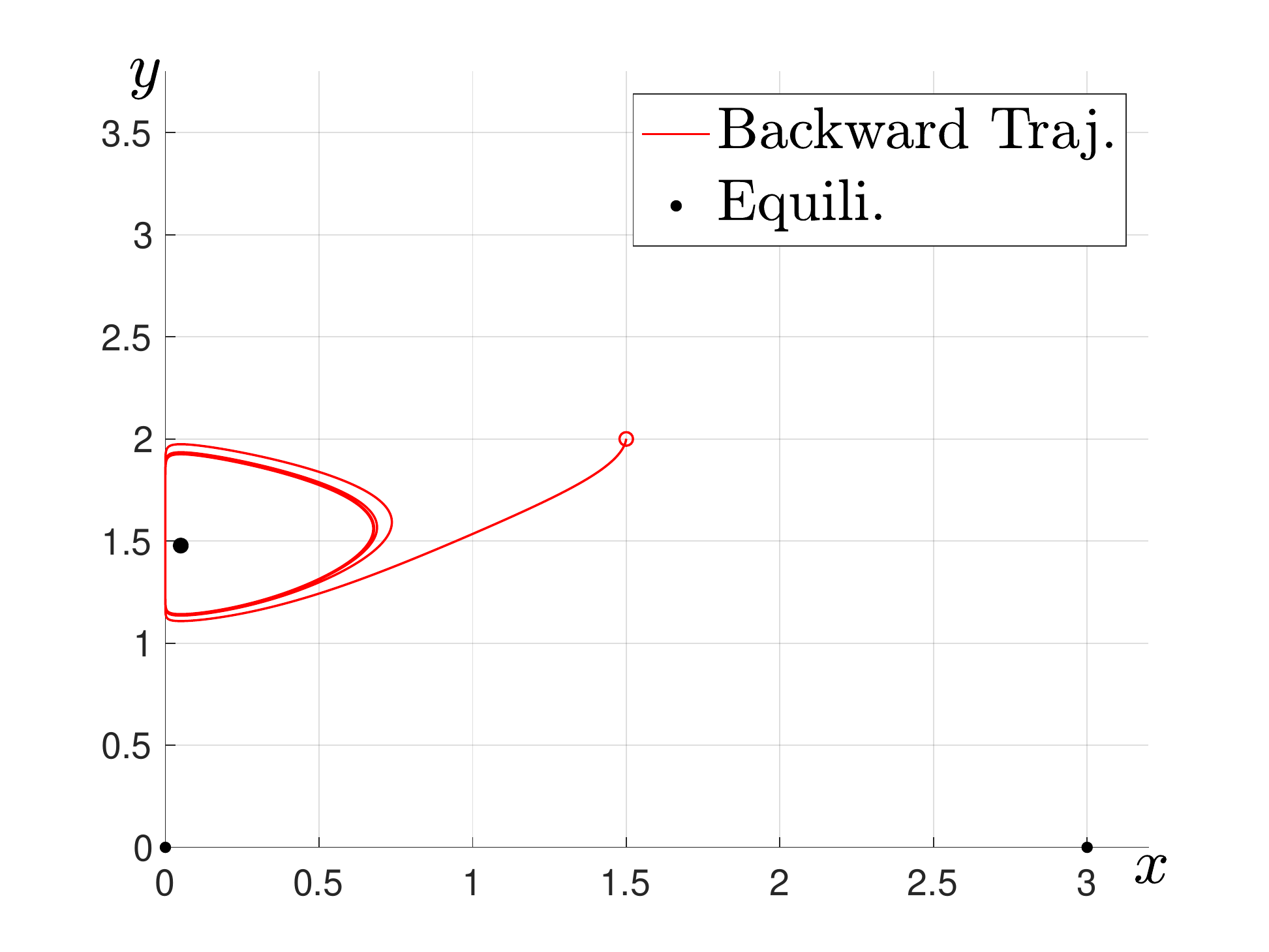}}
\end{tabular}
\end{tabular}
\caption{
$\chi(x)$ has two roots, $x_0<x_1$, in $(0,K]$.
Here $p(x)=mx/(ax^2+1)$,
and the parameters are $(r,K,m,a)=(4,3,2,5,0.75)$,
and $c=0.1$.
The condition
$H(F(\widehat{x}))> H(F(\widecheck{x}))$
in Proposition \ref{prop_2humps} holds.
The configurations $\Gamma(x_0)$
and $\Gamma(x_1)$
are given by Proposition \ref{prop_2humps}.
With $\epsilon=0.1$,
a forward trajectory
approaches a orbitally asymptotically stable periodic orbit near $\Gamma(x_1)$,
and a backward trajectory
approaches an unstable periodic orbit near $\Gamma(x_0)$.
}
\label{fig_H4_positive}
\end{figure}

\begin{figure}[t]
\centering
\frame
{\includegraphics[trim = 1.2cm .5cm .5cm 0cm, clip, width=.46\textwidth]{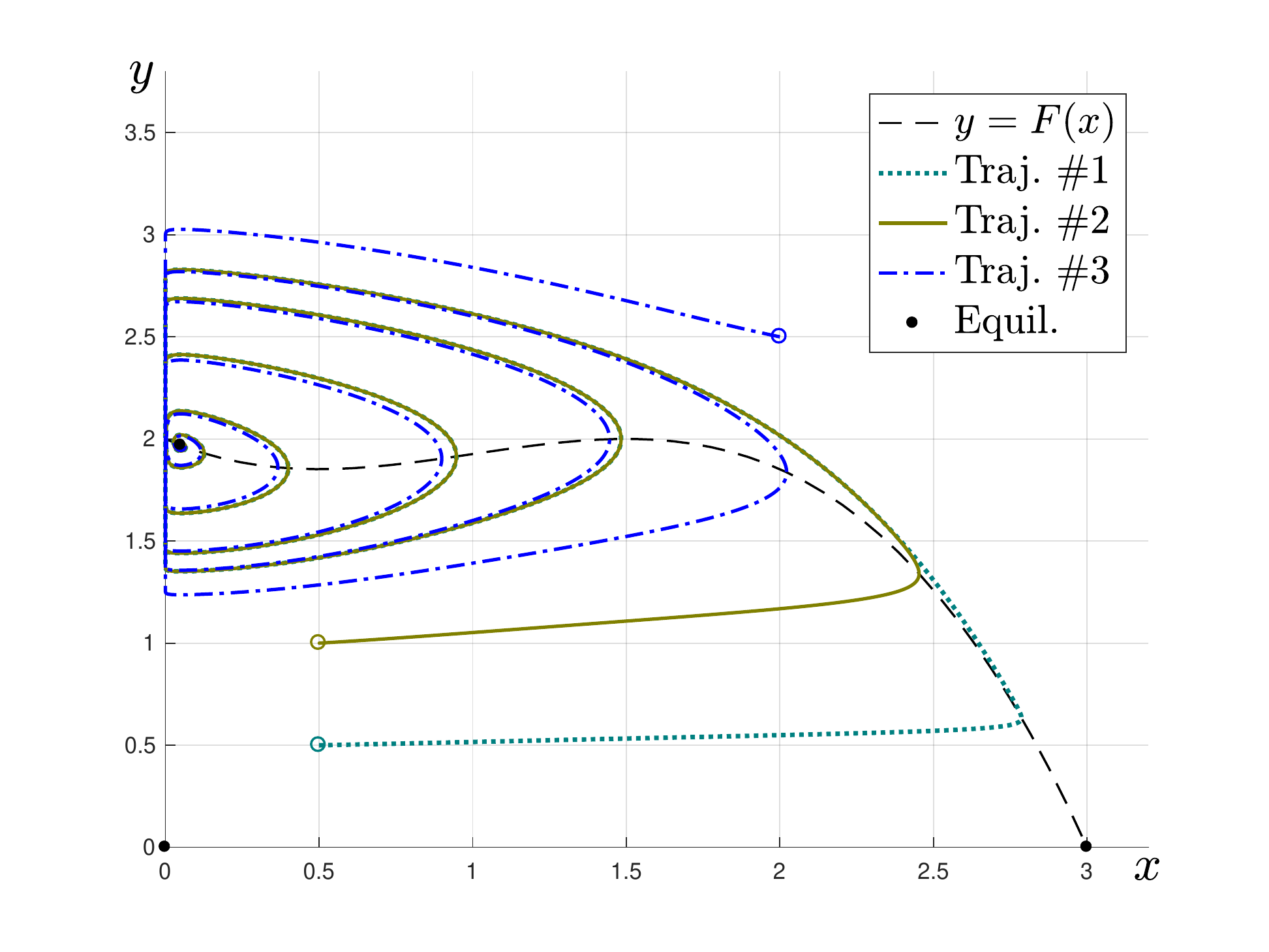}}
\caption{
Forward trajectories for \eqref{deq_xy}
with $p(x)=mx/(ax^2+bx+1)$
and parameters $(r,K,m,a)=(4,3,2,5,4/9)$, $c=0.1$, $\epsilon=0.01$.
The condition
$H(F(\widehat{x}))< H(F(\widecheck{x}))$
in Proposition \ref{prop_2humps} holds.
All trajectories converge to the positive equilibrium.
}
\label{fig_H4_negative}
\end{figure}

\begin{proof}
By \cite[Theorem 2.9]{Ruan:2001},
the positive equilibrium exists
and is globally asymptotically stable
if \beq{cond_H4gas}
  c\, m>\frac{4\epsilon}{\sqrt{3a}}
  \quad\text{and}\quad
  x_*<K<-x_*+2\sqrt{\frac{c\, m\, x_*}{a\,\epsilon}}.
\]
It is straightforward to show that $x_*=\frac{\epsilon}{c\, m}+o(\epsilon)$ as $\epsilon\to 0$,
so \eqref{cond_H4gas}
holds for all sufficiently small $\epsilon>0$
when $K<2/\sqrt{a}$.
Hence statement (i) follows.

The function $F(x)=rx(1-x/K)/p(x)$ equals \[
  F(x)=\frac{r}{m}\left(1-\frac{x}{K}\right)(ax^2+1).
\]
Let $X=x/K$ and $\kappa=aK^2$.
Then \[
  F(x)
  = \frac{r}{m}(1-X)(\kappa X^2+1)
  \equiv F_0(X,\kappa).
\]
The graph of $y=F_0(X,\kappa)$, $X\in [0,1]$,
is obtained by stretching
the graph of $y=F(x)$, $x\in [0,K]$, horizontally.
In particular, 
$F_0(0,\kappa)=\frac{r}{m}=\bar{y}=F(0)$ for all $\kappa>0$,
and $F_0(X,\kappa)$ and $F(x)$ have the same local extremum values.

It is straightforward to show that 
$F_0(X,\kappa)$ has two interior extrema if $\kappa>3$,
and no interior extremum if $\kappa\le 3$.
For $\kappa>3$, 
denote the interior local minimal and local maximal points of $F_0(X,\kappa)$
by $\widecheck{X}(\kappa)$ and $\widehat{X}(\kappa)$, respectively.
It is straightforward to show that both
$F_0(\widecheck{X}(\kappa),\kappa)$
and $F_0(\widehat{X}(\kappa),\kappa)$
are strictly increasing functions of $\kappa\in (3,\infty)$.
Since $H(y)$ is strictly decreasing on the interval $(0,\bar{y})$
and is strictly increasing on the interval $(\bar{y},\infty)$
(see Figure \ref{fig_H2_HLog}),
the function \[
  q(\kappa)
  \equiv H(F_0(\widehat{X}(\kappa),\kappa))- H(F_0(\widecheck{X}(\kappa),\kappa))
\]
is strictly increasing on $(3,\infty)$.
It can be shown that $F_0(\widehat{X}(\kappa),\kappa)\big|_{\kappa=4}=\bar{y}$.
Since $H(\bar{y})=0$, it follows that
$q(4)= - H(F_0(\widecheck{X}(\kappa),\kappa))\big|_{\kappa=4}< 0$.
Therefore, there exists $\kappa_*>4$ such that $q(\kappa)>0$ if $\kappa>\kappa_*$,
and $q(\kappa)<0$ if $\kappa<\kappa^*$.

For $aK^2>3$, denote the interior
local minimal and local maximal points of $F(x)$ in $[0,K]$
by $\widecheck{x}$ and $\widehat{x}$, respectively.
Since the graph of $F(x)$, $x\in [0,K]$,
and $F_0(X,\kappa)$, $X\in [0,1]$, $\kappa=aK^2$,
are the same up to horizontal stretching, \[
  F(\widehat{x})=F_0(\widehat{X}(\kappa),\kappa)
  \quad\text{and}\quad
  F(\widecheck{x})=F_0(\widecheck{X}(\kappa),\kappa).
\]
Hence $H(F(\widehat{x}))<H(F(\widecheck{x}))$ if $aK^2<\kappa_*$,
and $H(F(\widehat{x}))>H(F(\widecheck{x}))$ if $aK^2>\kappa_*$.
Assertions (ii) and (iii)
now follow from Proposition \ref{prop_2humps} and Corollary \ref{cor_gas}.
\end{proof}

\section{Proof of the Criteria}
\label{sec_proof}

To prove Theorem \ref{thm_y0},
we state and prove two preliminary theorems:
Theorem \ref{thm_entryexit2} is a variation of bifurcation delay,
and is proved using geometric singular perturbation theory;
Theorem \ref{thm_floquet} is a variation of Floquet theory.

\begin{theorem}\label{thm_entryexit2}
Consider system \eqref{sf_ab2},
where $f$, $g$ and $h$ are $C^{r+1}$ functions, $r\in \mathbb N$,
that satisfy \eqref{cond_turning_ab}.
Assume that $a_0<0<a_1$ satisfies \eqref{entryexit_ab},
and that there exist trajectories $\gamma_1$ and $\gamma_2$ of the limiting system \beq{fast_ab2}
  \dot{a}= b\, h(a,b,0),
  \quad
  \dot{b}= b\,g(a,b,0),
\] such that $(a_1,0)$ is the omega-limit point of $\gamma_1$
and $(a_2,0)$ is the alpha-limit point of $\gamma_2$.
Then for all sufficiently small $\delta_1>0$ and $\delta_2>0$,
there exists $\epsilon_0>0$ such that the followings hold.
Let \[
  (a^\din,\delta_1)
  = \gamma_1\cap \{b=\delta_1\}
  \quad\text{and}\quad
  (a^\dout,\delta_1)
  = \gamma_2\cap \{b=\delta_1\}.
\] Let \[
  \Sigma^\din
  = \{(a,\delta_1): |a-a^\din|<\delta_2\}
  \quad\text{and}\quad
  \Sigma^\dout
  = \{(a,\delta_1): |a-a^\dout|<\frac{|a_1|}{2}\}.
\] 
Then the transition mapping from $\Sigma^\din$ to $\Sigma^\dout$ of \eqref{sf_ab2}
is well-defined for $\epsilon\in (0,\epsilon_0]$,
and is $C^r$ up to $\epsilon=0$.
That is, there exists a $C^r$ function \[
  \pi_\epsilon(z): \Sigma^\din\times [0,\epsilon_0]\to \Sigma^\dout
\] such that, for each $z\in \Sigma^\din$ and $\epsilon\in (0,\epsilon_0]$,
$z$ and $\pi_\epsilon(z)$ are connected
by a trajectory of \eqref{sf_ab2}, and \[
  \pi_0(z_0)= \pi_0(z_1)
  \quad
  \text{for $z_0=(a_0,\delta)$ and $z_1=(a_1,\delta)$ satisfying \eqref{entryexit_ab}.}
\]

Denote the trajectory connecting $z\in \Sigma^\din$ and $\pi_\epsilon(z)$ by $\sigma_\epsilon$,
and the time span of $\sigma_\epsilon$ by $T_{\epsilon,\delta_1}$. Then \beq{est_Teps_ab2}
  T_{\epsilon,\delta_1}=\frac{1}{\epsilon}\left(
    \int_{a_0}^{a_1}\frac{1}{f(a,0,0)}\;da
    + o(1)
  \right)\quad \text{as }\epsilon\to 0.
\] Moreover, there exists $M>0$ such that for each $\Delta\in (0,\delta_1]$,
if we parameterize $\sigma_\epsilon\cap \{b<\Delta\}$
by $(a_\epsilon(t),b_\epsilon(t))$, $t\in [0,T_{\epsilon,\Delta}]$,
then there exists $\epsilon_\Delta>0$ satisfying
\beq{ineq_bint_Delta}
  \int_0^{T_{\epsilon,\Delta}}
  b_\epsilon(t)\; dt
  \le M \Delta
  \quad\forall\; \epsilon\in (0,\epsilon_\Delta].
\]
\end{theorem}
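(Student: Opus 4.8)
The plan is to decompose the sub-arc $\sigma_\epsilon\cap\{b<\Delta\}$ into three pieces according to whether $a_\epsilon\le-\eta$, $|a_\epsilon|<\eta$, or $a_\epsilon\ge\eta$, for a small fixed $\eta>0$. On the two outer pieces $b_\epsilon$ obeys a scalar linear differential inequality forcing exponential decay, resp.\ growth, in $t$, which yields an $O(\Delta)$ bound for $\int b_\epsilon\,dt$ uniformly in $\Delta$. On the middle piece, where the trajectory crosses the turning point, the delay of stability loss forces $b_\epsilon$ to be exponentially small in $1/\epsilon$, so that piece contributes an amount tending to $0$ as $\epsilon\to0$ and hence $\le\Delta$ once $\epsilon\le\epsilon_\Delta$.

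First I would record the qualitative shape of $\sigma_\epsilon$ supplied by the construction of $\pi_\epsilon$ in the first part of the proof: for $\epsilon$ small, $\sigma_\epsilon$ meets $\{b=\delta_1\}$ at a point whose $a$-coordinate is close to $a^\din<0$; thereafter $b_\epsilon$ decreases, since $g(a,b,\epsilon)<0$ while $a<0$ and $b,\epsilon$ are small, reaching an exponentially small minimum as $a_\epsilon$ passes through $0$; then $b_\epsilon$ increases, since $g>0$ for $a>0$, until it returns to $\delta_1$ at a point whose $a$-coordinate is close to $a^\dout>0$. In particular $a_\epsilon$ is increasing along $\sigma_\epsilon$ away from short transient sub-arcs near the two endpoints, where $b_\epsilon$ is comparable to $\delta_1$ and the $b\,h$ term may dominate $\epsilon f$ in $\dot a_\epsilon$; those transients lie in $\{a_\epsilon\le-\eta\}$ and $\{a_\epsilon\ge\eta\}$ respectively, and $\{b<\Delta\}$ is a single sub-arc with $b_\epsilon=\Delta$ at both endpoints. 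Now fix $\eta>0$ so small that $a^\din<-2\eta<0<2\eta<a^\dout$; that $g(a,b,\epsilon)\le-c_1<0$ whenever $a\le-\eta$ and $g(a,b,\epsilon)\ge c_1$ whenever $a\ge\eta$, for $(a,b)$ in the compact region swept by $\sigma_\epsilon$ and $\epsilon$ small; and that the delay exponent, a perturbation of $a\mapsto\int_{a_0}^a g(s,0,0)/f(s,0,0)\,ds$, which is negative on $(a_0,a_1)$ by \eqref{entryexit_ab} and the monotonicity of $g/f$, stays $\le-K<0$ on $[-\eta,\eta]$ uniformly in small $\epsilon$. The last property is exactly the estimate underlying \eqref{est_Teps_ab2}, and it gives $b_\epsilon\le C\delta_1 e^{-K/\epsilon}$ whenever $|a_\epsilon|\le\eta$.

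The three estimates are then short. On the sub-arc where $a_\epsilon\le-\eta$ — an initial interval $[0,\tau_1]$ of $[0,T_{\epsilon,\Delta}]$ with $b_\epsilon(0)=\Delta$ — the inequality $\dot b_\epsilon=b_\epsilon\,g(a_\epsilon,b_\epsilon,\epsilon)\le-c_1 b_\epsilon$ gives $b_\epsilon(t)\le\Delta e^{-c_1 t}$, hence $\int_0^{\tau_1}b_\epsilon\,dt\le\Delta/c_1$; reversing time on the terminal interval where $a_\epsilon\ge\eta$ gives $\int b_\epsilon\,dt\le\Delta/c_1$ there as well. On the middle sub-arc $|a_\epsilon|\le\eta$ we have $b_\epsilon\le C\delta_1 e^{-K/\epsilon}$, while its duration is at most the full transition time $T_{\epsilon,\delta_1}=O(1/\epsilon)$ from \eqref{est_Teps_ab2}; so $\int b_\epsilon\,dt\le C\delta_1 e^{-K/\epsilon}\cdot O(1/\epsilon)\to0$ as $\epsilon\to0$, and there is $\epsilon_\Delta>0$ making this integral $\le\Delta$ for $\epsilon\in(0,\epsilon_\Delta]$. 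Adding the three contributions, $\int_0^{T_{\epsilon,\Delta}}b_\epsilon\,dt\le(2/c_1+1)\Delta$ for $\epsilon\le\epsilon_\Delta$, which is \eqref{ineq_bint_Delta} with $M=2/c_1+1$; note $M$ is determined by $c_1$ (hence by $\eta$) alone and is independent of $\Delta$.

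The step I expect to be the real work is the first: justifying the shape of $\sigma_\epsilon$ used above — monotonicity of $a_\epsilon$ off the endpoint transients, the single-interval structure of $\{b<\Delta\}$, and the uniform exponential smallness of $b_\epsilon$ near the turning point. All of this should already be in hand from the geometric-singular-perturbation analysis that produces $\pi_\epsilon$ and \eqref{est_Teps_ab2} (a blow-up of the invariant line $\{b=0\}$, Fenichel theory on its hyperbolic branches, and a direct analysis at the turning point), so the task is to extract it in the form needed here rather than to establish anything new; granting it, \eqref{ineq_bint_Delta} is the Gronwall-plus-volume computation above.
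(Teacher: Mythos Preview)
Your argument is correct and matches the paper's proof in its essential structure: a three-piece split of $\sigma_\epsilon\cap\{b<\Delta\}$, a Gronwall bound $b_\epsilon(t)\le\Delta e^{-c_1 t}$ on the outer pieces where $g$ is sign-definite, and exponential smallness of $b_\epsilon$ on the middle piece coming from the delay estimate established in the first half of the proof. The one noteworthy difference is where the cuts are placed. The paper splits at $a=a_0+\delta_2$ and $a=a_1-\delta_2$, so its outer pieces are thin neighborhoods of the entry and exit and its middle piece $\sigma_\epsilon^{(2)}$ is the bulk of the arc; it then bounds $\int_{\sigma^{(2)}} b\,dt$ by changing to the variable $a$ via $|da/dt|\ge\mu\epsilon-Cb>0$ and using $b\le e^{-\nu/\epsilon}$ there. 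You instead split at $a=\pm\eta$ near the turning point, so your outer Gronwall pieces are large and your middle piece is small; you bound its contribution simply by $(\sup b)\cdot T_{\epsilon,\delta_1}=O(e^{-K/\epsilon}/\epsilon)\to0$, which is arguably cleaner since it avoids the change of variables. Both choices work for the same reason---the exponential-decay argument needs only $|g|\ge c_1$, and the exponential-smallness bound holds on any compact subinterval of $(a_0,a_1)$---so this is a minor variation rather than a different idea. Your final paragraph correctly identifies that the qualitative shape of $\sigma_\epsilon$ (single sub-arc, monotone $a_\epsilon$ once $b$ is small, uniform $b\le Ce^{-K/\epsilon}$ near $a=0$) is inherited from the GSPT construction of $\pi_\epsilon$; the paper likewise invokes this without further comment when it writes ``Choose $\epsilon_\Delta>0$ such that\dots''.
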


\begin{figure}[t]
\centering
\begin{tabular}{cc}
\frame
{\includegraphics[trim = 1.5cm .8cm 0cm 0cm, clip, width=.46\textwidth]{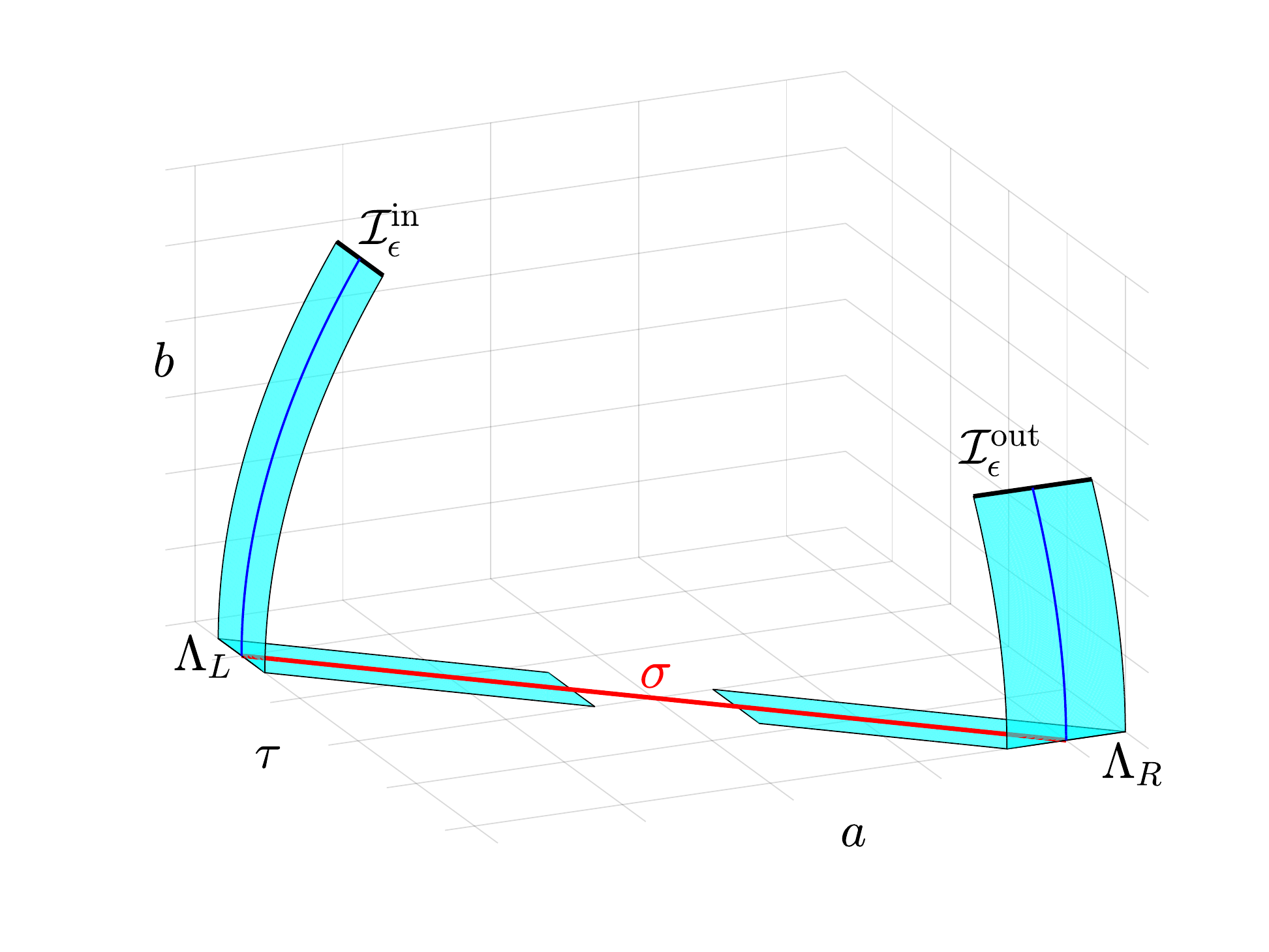}}
&
\frame
{\includegraphics[trim = 1.5cm .8cm 0cm 0cm, clip, width=.46\textwidth]{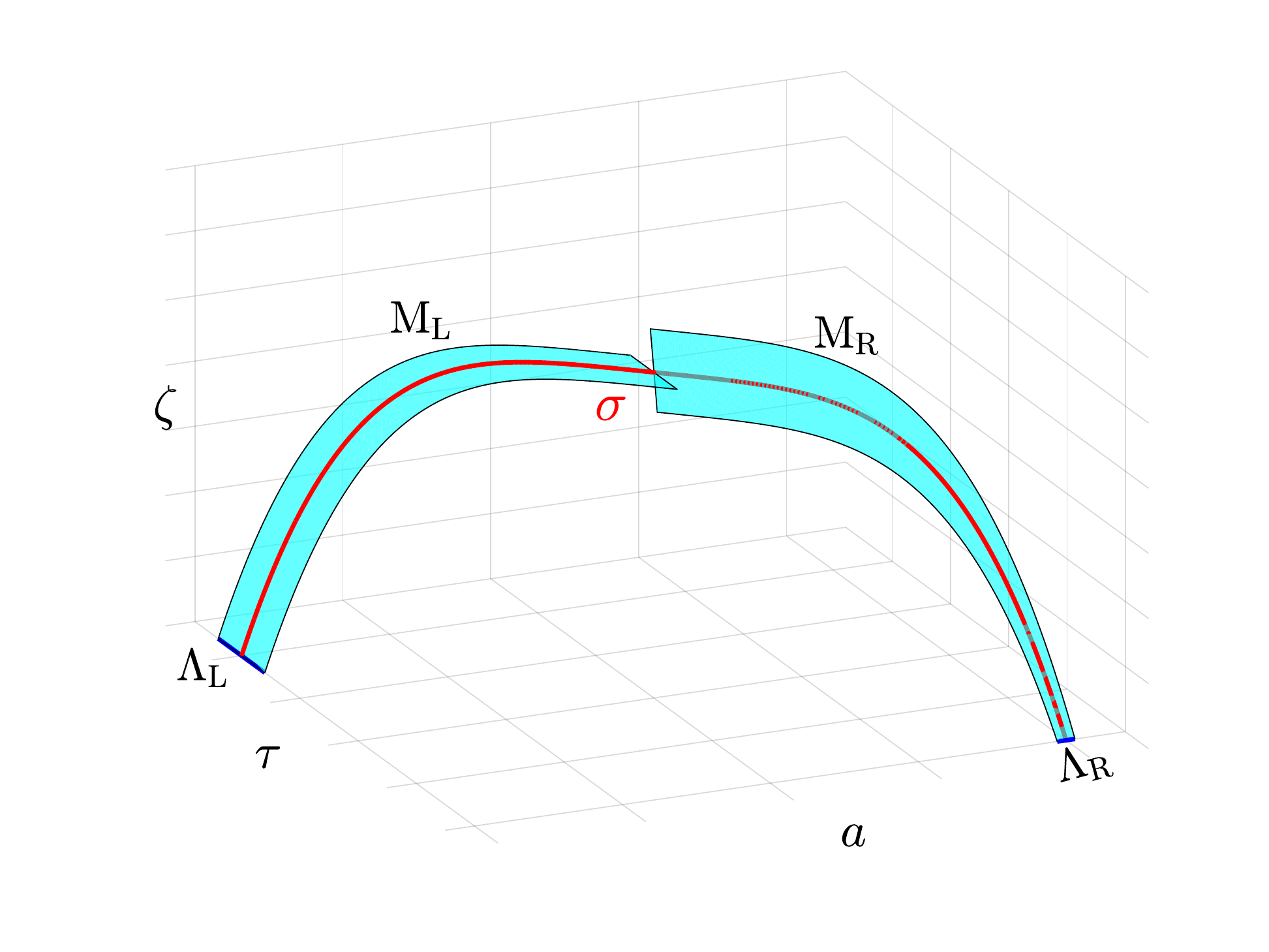}}
\\
(A)
&
(B)
\end{tabular}
\caption{
$\Lambda_L$ and $\Lambda_R$
are the omega- and alpha-limit sets, respectively,
of $\mathcal{I}^\din_0$ and $\mathcal{I}^\dout_0$ along
the fast system \eqref{fast_ab2}.
$M_L$ and $M_R$ are the manifolds
evolved from $\Lambda_L$ and $\Lambda_R$, respectively,
along the slow system \eqref{slow_abz}.
The projections of $M_L$ and $M_L$
in $(a,\zeta,\tau)$-space
intersect transversally along the curve $\sigma$.
}
\label{fig_connection}
\end{figure}

\begin{proof}
The first half of the proof
is similar to that in \cite{THHsu:2017},
so we will skip some details.
Let $\zeta=\epsilon\log b$ and $\tau=\epsilon t$,
where $t$ is the time variable in \eqref{sf_ab2}.
Then \eqref{sf_ab2} is equivalent to \beq{sf_abz}
  \dot{a}= \epsilon f(a,b,\epsilon)+ b\, h(a,b,\epsilon),
  \quad
  \dot{b}= b\,g(a,b,\epsilon),
  \quad
  \dot{\zeta}= \epsilon g(a,b,\epsilon),
  \quad
  \dot{\tau}=\epsilon.
\]
The limiting fast system (or layer system) of \eqref{sf_abz} is \beq{fast_abz}
  \dot{a}= b\, h(a,b,0),
  \quad
  \dot{b}= b\,g(a,b,0),
  \quad
  \dot{\zeta}= 0,
  \quad
  \dot{\tau}=0.
\]
On the invariant set $\{b=0\}$,
the limiting slow system (or reduced system) is \beq{slow_abz}
  a'= f(a,0,0),\quad
  b=0,\quad
  \zeta'=g(a,0,0),\quad
  \tau'=1,
\] where \,$\prime$\, denotes $\frac{d}{d\tau}$.

Choose $\delta_1>0$ small enough so that
the trajectories that approach $a_0$ or $a_1$
can be parameterized as \[
  (a^\din_0(\delta),\delta)
  \quad\text{and}\quad
  (a^\dout_0(\delta),\delta),\quad \delta\in [0,\delta_1].
\]
Let $0<\delta_2<\min\{\frac{|a_1|}{2},\frac{|a_0|}{2}\}$ and \[
  \mathcal{I}^\din_\epsilon=\left\{
    \p a\\ b\\ \zeta\\ \tau\pp:
    \begin{array}{l}
      a=a^\din_0(\delta_1)\\
      b=\delta_1\\
      \zeta=\epsilon\log \delta_1\\
      |\tau|<\delta_2
    \end{array}
  \right\},\quad
  \mathcal{I}^\dout_\epsilon=\left\{
    \p a\\ b\\ \zeta\\ \tau\pp:
    \begin{array}{l}
      |a-a^\dout_0(\delta_1)|<\delta_2\\
      b=\delta_1\\
      \zeta=\epsilon\log \delta\\
      \tau=\tau_1
    \end{array}
  \right\},
\]
where \beq{def_tau1}
  \tau_1=\int_{a_0}^{a_1}\frac{1}{f(a,0,0)}\;da.
\]
Let $\Lambda_L=W^s_0(\mathcal{I}^\din_0)$
and $\Lambda_R=W^u_0(\mathcal{I}^\dout_0)$ 
be the omega- and alpha-limit sets, respectively,
of $\mathcal{I}^\din_0$ and $\mathcal{I}^\dout_0$. Then 
\[
  \Lambda_L
  =\left\{
    \p a\\ b\\ \zeta\\ \tau\pp:
    \begin{array}{l}
      a=a_0\\
      b=0\\
      \zeta=0\\
      |\tau|<\delta_2
    \end{array}
  \right\},\quad
  \Lambda_R
  =\left\{
    \p a\\ b\\ \zeta\\ \tau\pp:
    \begin{array}{l}
      |a-a_1|<\delta_2\\
      b=0\\
      \zeta=0\\
      \tau=\tau_1
    \end{array}
  \right\}.
\]
Let $M_L$ and $M_R$
be the manifolds evolved from $\Lambda_L$ and $\Lambda_R$,
respectively, along \eqref{slow_abz}.
Since system \eqref{slow_abz} can be decoupled,
by writing down the explicit solutions
it is easy to check that,
if both \eqref{entryexit_ab} and  \eqref{def_tau1} are satisfied,
the projections of $M_L$ and $M_R$
in $(a,\zeta,\tau)$-space
intersect transversally along the curve (see Figure \ref{fig_connection})
\beq{def_sigma_abz}
  \sigma
  =\left\{
    \p a\\ \tau\\ \zeta\pp:
      a\in [a_0,a_1],\;\;
      \tau=\int_{a_0}^a \frac{1}{f(a,0,0)}\;da,\;\;
      \zeta=\int_{a_0}^a \frac{g(a,0,0)}{f(a,0,0)}\;da
  \right\}.
\]

Note that $\Lambda_L$ and $\Lambda_R$
are both normally hyperbolic with respect to \eqref{fast_abz}.
Let $\widetilde{\mathcal{I}}^\din_\epsilon$
and $\widetilde{\mathcal{I}}^\dout_\epsilon$
be the manifolds evolved from
$\mathcal{I}^\din_\epsilon$
and $\mathcal{I}^\dout_\epsilon$, respectively, along \eqref{sf_abz}.
From the Exchange Lemma \cite{Jones:2009,Schecter:2008b},
by decreasing $\delta_1$ and $\delta_2$ if necessary,
\begin{align}
  &\widetilde{\mathcal{I}}^\din_\epsilon\cap \{a=a_0+\delta_2\}
  \quad\text{is\, $C^r$ $O(\epsilon)$-close to}\;\;
  M_L\cap \{a=a_0+\delta_2\}
  \label{close_iin_delta}
  \\
\intertext{and}
  &\widetilde{\mathcal{I}}^\dout_\epsilon\cap \{a=a_1-\delta_2\}
  \quad\text{is\, $C^r$ $O(\epsilon)$-close to}\;\;
  M_R\cap \{a=a_1-\delta_2\}.
  \label{close_iout_delta}
\end{align}

Also note that system \eqref{sf_ab2} is equivalent to \beq{deq_azeta}
  &\frac{da}{d\tau}= 
  f(a,e^{-\zeta/\epsilon},\epsilon)
  + \frac{e^{-\zeta/\epsilon}}{\epsilon}\; h(a,e^{-\zeta/\epsilon},\epsilon),
  \\[.5em]
  &\frac{d\zeta}{d\tau}= -g(a,e^{-\zeta/\epsilon},\epsilon).
\]
On $\sigma\cap \{a_0+\delta_2<a<a_1-\delta_2\}$,
from \eqref{entryexit_ab} and \eqref{def_sigma_abz} we have
\beq{min_zeta_abz}
  \zeta
  \ge \min\left\{
    \int_{a_0}^{a_0+\delta_2}\frac{g(a,0,0)}{f(a,0,0)}\;da,\;\;
    \int_{a_1-\delta_2}^{a_1}\frac{-g(a,0,0)}{f(a,0,0)}\;da
  \right\}
  \equiv 2\nu > 0.
\] 
Hence \eqref{deq_azeta} is a regular perturbation of \eqref{slow_abz}
on $\{a_0+\delta_2<a<a_1-\delta_2,\zeta\ge \nu\}$.
Therefore, from \eqref{close_iin_delta}
and \eqref{close_iout_delta}
the projections of $\widetilde{\mathcal{I}}^\din_\epsilon$
and $\widetilde{\mathcal{I}}^\dout_\epsilon$
in $(a,\zeta,\tau)$-space
intersect transversally
along $\sigma\cap \{a_0+\delta_2<a<a_1-\delta_2\}$
for any sufficiently small $\epsilon>0$. 
Hence we can uniquely define the intersection point \[
  z_\epsilon
  = \widetilde{\mathcal{I}}^\din_\epsilon
  \cap \widetilde{\mathcal{I}}^\dout_\epsilon\cap \{a=0\}.
\]
By the definition of $\widetilde{\mathcal{I}}^\din_\epsilon$
and $\widetilde{\mathcal{I}}^\dout_\epsilon$,
the trajectory of \eqref{sf_abz} containing $z_\epsilon$
has unique intersection points with 
$\mathcal{I}^\din_\epsilon$ and $\mathcal{I}^\dout_\epsilon$.
Denote these intersections points by $z_\epsilon^\din\in \mathcal{I}^\din_\epsilon$
and $z_\epsilon^\dout\in \mathcal{I}^\dout_\epsilon$.
By definition of $\mathcal{I}^\din_\epsilon$,
the $a$-coordinate of $z_\epsilon^\din$ is $a^\din(\delta_1)$.
Denote the $a$-coordinate of $z_\epsilon^\dout$
by $a_\epsilon^\dout$,
$\epsilon\in (0,\epsilon_0]$,
and $a_{\epsilon=0}^\dout=a_1$.
Then $a^\dout_\epsilon$ is a $C^r$ function of $\epsilon\in [0,\epsilon_0]$
such that $(a^\dout_\epsilon,\delta_1)$
lies in the trajectory of \eqref{sf_ab2} containing  $(a^\din(\delta_1),\delta_1)$.

Now we consider $a_0$ in the above discussion
as an independent variable in a compact subset $\mathcal{A}$ of $(-\infty,0)$.
Assume that each $a_0\in \mathcal{A}$
corresponds to an $a_1>0$ satisfying \eqref{entryexit_ab}.
By the construction, decreasing $\epsilon_0$ and $\delta_2$ if necessary,
the mapping \[
  (\epsilon,a_0)\mapsto a^\dout_\epsilon
\] is $C^r$ on $[0,\epsilon_0]\times \mathcal{A}$,
and the transition map $\pi_\epsilon:\Sigma^\dout\to \Sigma^\din$
satisfies \[
  \pi_\epsilon(a_0,\delta_1)= (a^\dout_\epsilon(a_0),\delta_1),\quad
  \epsilon\in (0,\epsilon_0].
\] Setting $\pi_{\epsilon=0}(a_0,\delta_1)=(a_1,\delta_1)$,
then $\pi_\epsilon$
is also $C^r$ on $[0,\epsilon_0]\times \Sigma^\dout$.

By the relation $t=\epsilon\tau$
and the expression of $\sigma$ in \eqref{def_sigma_abz},
the time span in $t$
form $z^\din\in \Sigma^\din$ to $\Sigma^\dout$ is
$T_{\epsilon,\delta_1}=\tau_1/\epsilon+O(1)$.
This proves \eqref{est_Teps_ab2}.

Next we prove \eqref{ineq_bint_Delta}.
Let $\sigma_\epsilon$ be the trajectory of \eqref{sf_ab2} passing through
a point $(a_0,\delta_1)\in \Sigma^\din$.
Fix any $\Delta\in (0,\delta_1]$.
Parameterize $\sigma_\epsilon\cap \{b<\Delta\}$ by
$(a,b)=(a_\epsilon(t),b_\epsilon(t))$, $t\in [0,T_{\epsilon,\Delta}]$.
We split $\sigma_\epsilon\cap \{b<\Delta\}$
into three parts: \[
  &\sigma_\epsilon^{(1)}= \sigma_\epsilon\cap \{a\le a_0+\delta_2, b<\Delta\}
  =\{(a_\epsilon(t),b_\epsilon(t)): t\in [0,T_{\epsilon,\Delta}^{(1)}]\},
  \\[.5em]
  &\sigma_\epsilon^{(2)}= \sigma_\epsilon\cap \{a\in [a_0+\delta_2,a_1-\delta_2], b<\Delta\},
  \\[.5em]
  &\sigma_\epsilon^{(3)}= \sigma_\epsilon\cap \{a\ge a_1-\delta_2, b<\Delta\}.
\] 
Since $g(a,b,\epsilon)<0$ for $a<0$
there exist $0<\mu<C$ such that \beq{gnegtive_ab2}
  g(a,b,\epsilon)<-\mu
  \quad\forall\;(a,b,\epsilon)\in [a_0,a_0+\delta_2]\times [0,\delta_1]\times [0,\epsilon_0]
\]
Choose $\epsilon_\Delta>0$ such that \[
  a<a_0+\delta_1\quad \forall\; (a,b)\in \sigma_\epsilon^{(1)}
  \qquad\text{and}\qquad
  a>a_1-\delta_1\quad \forall\; (a,b)\in \sigma_\epsilon^{(3)}
\] for all $\epsilon\in (0,\epsilon_\Delta]$.
From \eqref{gnegtive_ab2} and the equation $\dot{b}=-bg(a,b,\epsilon)$,
it follows that \[
  \dot{b}<-\mu b
  \quad\text{on }\sigma_\epsilon^{(1)}.
\]
Since $b_\epsilon(0)=\Delta$, we then obtain \[
  b_\epsilon(t)\le \Delta\, e^{-\mu t},\quad t\in [0,T_\epsilon^{(1)}].
\] It follows that \beq{ineq_bint_sigma1}
  \int_{\sigma_\epsilon^{(1)}}b\;dt
  \le \int_0^{T_{\epsilon,\Delta}^{(1)}} \Delta\, e^{-\mu t}\;dt
  < \frac{C\Delta}{\mu}.
\]
Similarly, \beq{ineq_bint_sigma3}
  \int_{\sigma_\epsilon^{(3)}}b\;dt
  < \frac{C\Delta}{\mu}.
\]

From \eqref{min_zeta_abz} we have \[
  -\epsilon\log b= \zeta\ge \nu>0
  \quad\text{on }\sigma_\epsilon^{(2)},
\] that is, \beq{ineq_b_sigma2}
  b\le \exp(-\nu/\epsilon)
  \quad\text{on }\sigma_\epsilon^{(2)}.
\]
Therefore, using \eqref{ineq_b_sigma2} on \eqref{sf_ab2}, 
on $\sigma_\epsilon^{(2)}$ we have \[
  \left|\frac{b}{da/dt}\right|
  < \frac{b}{\mu\epsilon-Cb}
  <\frac{1}{\mu\,\epsilon\exp(\nu/\epsilon)-C}
  \to 0
  \quad\text{as }\epsilon\to 0.
\]
Hence \beq{ineq_bint_sigma2}
  \int_{\sigma_\epsilon^{(2)}}b\;dt
  = \int_{a_0+\delta_2}^{a_1-\delta_2}\frac{b}{da/dt}\;da
  \to 0
  \quad\text{as }\epsilon\to 0.
\] Summing up \eqref{ineq_bint_sigma1}, \eqref{ineq_bint_sigma3} and \eqref{ineq_bint_sigma2},
we conclude that,
the inequality \eqref{ineq_bint_Delta}
with $M=3C\Delta/\mu$
holds for any sufficiently small $\epsilon$.
\end{proof}

Floquet multipliers
are useful for calculating
the determinant of the Jacobian
of the Poincar\'{e} map
at a cross section of a periodic orbit
(see e.g.\ \cite[Chapter 11]{Teschl:2012}).
Our configuration $\Gamma(x_0)$
defined in \eqref{def_gamma12}
is a closed loop,
so the nearby trajectories
are nearly periodic orbits.
Since these trajectories are not necessarily periodic,
standard Floquet theory does not directly apply.
The following theorem is a variation of that theory suiting our context.

\begin{theorem}\label{thm_floquet}
Consider systems in $\mathbb R^N$, $N\ge 2$, 
of the form \beq{deq_z}
  \dot{z}= {h}_\epsilon({z}),
\]
where ${h}_\epsilon(z)=h(z,\epsilon)$
is a $C^2$ function of $({z},\epsilon)\in \mathbb R^N\times [0,\epsilon_0]$.
Let $z_0\in \mathbb R^N$ with $h(z_0,0)\ne 0$,
and let $\Sigma$ be a cross section of $z_0$
transversal to $h_0(z_0)$.
Assume that there exist $\epsilon_0>0$
and a neighborhood $\Sigma_{(1)}\subset \Sigma$ of $z_0$
such that the return map from $\Sigma_{(1)}$ to $\Sigma$
is well-defined for $\epsilon\in (0,\epsilon_0]$
and is $C^1$ up to $\epsilon=0$.
That is, there is a $C^1$ function
\[
  P_\epsilon(z): 
  \Sigma_{(1)}\times [0,\epsilon_0]\to \Sigma
\] such that
for any $z\in \Sigma_{(1)}$ and $\epsilon\in (0,\epsilon_0]$
there is a trajectory of \eqref{deq_z}
that starts at $z\in \Sigma_{(1)}$
and returns to $\Sigma$ at $P_\epsilon(z)$.

Let $\zeta_\epsilon(t)$, $0\le t\le T_\epsilon$,
be a trajectory of \eqref{deq_z} that starts at $z_0$
and ends at $P_\epsilon(z_0)$.
Assume that \[
  \int_0^{T_\epsilon}
  \mathrm{div}(h_\epsilon(\zeta_\epsilon(t)))\;dt
  \to \lambda_0
  \quad\text{as }\epsilon\to 0
\] for some $\lambda_0\in \mathbb R$.
Then \beq{eq_detDP}
  \det\big(DP_{0}(u_0)\big)
  = \exp(\lambda_0),
\]
where $DP_{\epsilon}(u_0)$ is regarded
as a linear transform on the tangent space $T_{z_0}\Sigma$.
\end{theorem}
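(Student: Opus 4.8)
The plan is to establish, for each fixed $\epsilon>0$, the classical identity relating the determinant of the linearized return map to the exponential of the integrated divergence along the orbit, and then to let $\epsilon\to0$ while being careful never to pass to a limit inside the trajectory $\zeta_\epsilon$ itself, only in quantities that are assumed (or already known) to converge. The reason standard Floquet theory does not apply verbatim is that $\zeta_\epsilon$ is not closed and, in the intended application, its time span $T_\epsilon$ is of order $1/\epsilon$; the hypothesis that $P_\epsilon$ is $C^1$ up to $\epsilon=0$ is what plays the role of periodicity.

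First I would linearize. Let $\psi_\epsilon^t$ denote the flow of \eqref{deq_z} and $\Phi_\epsilon(t)=D\psi_\epsilon^t(z_0)$ the fundamental matrix solution of the variational equation $\dot\Phi=Dh_\epsilon(\zeta_\epsilon(t))\,\Phi$, $\Phi_\epsilon(0)=I$. The Abel--Liouville formula then gives $$\det\Phi_\epsilon(T_\epsilon)=\exp\Big(\int_0^{T_\epsilon}\mathrm{tr}\,Dh_\epsilon(\zeta_\epsilon(t))\,dt\Big)=\exp\Big(\int_0^{T_\epsilon}\mathrm{div}\,h_\epsilon(\zeta_\epsilon(t))\,dt\Big),$$ and by hypothesis the right-hand side converges to $\exp(\lambda_0)$ as $\epsilon\to0^+$. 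Thus it suffices to prove $\det DP_0(z_0)=\lim_{\epsilon\to0^+}\det\Phi_\epsilon(T_\epsilon)$.

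Next I would relate $\Phi_\epsilon(T_\epsilon)$ to $DP_\epsilon(z_0)$ by the usual Poincar\'e-map computation, carried out with bases adapted to $\Sigma$. Because $\Sigma$ is transversal to $h_0(z_0)$, it is transversal to $h_\epsilon(z_0)$ and to $h_\epsilon(P_\epsilon(z_0))$ for all small $\epsilon$, so the first return time $\tau_\epsilon$ is $C^1$ near $z_0$ with $\tau_\epsilon(z_0)=T_\epsilon$ and $P_\epsilon(z)=\psi_\epsilon^{\tau_\epsilon(z)}(z)$; differentiating at $z_0$ yields $DP_\epsilon(z_0)v=\Phi_\epsilon(T_\epsilon)v+h_\epsilon(P_\epsilon(z_0))\,D\tau_\epsilon(z_0)v$ for $v\in T_{z_0}\Sigma$. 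Since $t\mapsto h_\epsilon(\zeta_\epsilon(t))$ is itself a solution of the variational equation, $\Phi_\epsilon(T_\epsilon)\,h_\epsilon(z_0)=h_\epsilon(P_\epsilon(z_0))$. Choosing a basis $v_1,\dots,v_{N-1}$ of $T_{z_0}\Sigma$ and setting $v_N=h_\epsilon(z_0)$ gives a basis of $\mathbb R^N$ by transversality; likewise pick $w_1,\dots,w_{N-1}$ spanning $T_{P_\epsilon(z_0)}\Sigma$ and $w_N=h_\epsilon(P_\epsilon(z_0))$. In these bases $\Phi_\epsilon(T_\epsilon)$ has the block form $\bigl(\begin{smallmatrix}A_\epsilon&0\\ *&1\end{smallmatrix}\bigr)$, where $A_\epsilon$ represents $DP_\epsilon(z_0)$ and the bottom-left block records $-D\tau_\epsilon(z_0)v_j$; hence, with $V_\epsilon,W_\epsilon$ the matrices whose columns are the $v_j,w_j$ in standard coordinates, $$\det\Phi_\epsilon(T_\epsilon)=\frac{\det W_\epsilon}{\det V_\epsilon}\,\det A_\epsilon.$$ Now I let $\epsilon\to0^+$. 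Recalling that in this setting $P_0(z_0)=z_0$, so that $DP_0(z_0)$ is an endomorphism of $T_{z_0}\Sigma$ with a coordinate-free determinant, I choose the $w_j$ to depend continuously on $\epsilon$ with $w_j\to v_j$ (possible since $P_\epsilon(z_0)\to z_0$ and $h_\epsilon\to h_0$ uniformly near $z_0$); then $\det W_\epsilon/\det V_\epsilon\to1$, while the $C^1$-up-to-$\epsilon=0$ hypothesis gives $\det A_\epsilon=\det DP_\epsilon(z_0)\to\det DP_0(z_0)$. Combining the two displays yields $\det DP_0(z_0)=\exp(\lambda_0)$, i.e.\ \eqref{eq_detDP}.

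The main obstacle is exactly the divergence of $T_\epsilon$: one cannot take $\epsilon\to0$ directly in the orbit $\zeta_\epsilon$ or in the matrix $\Phi_\epsilon(T_\epsilon)$ as an object, so the argument must be organized so that the limit is only ever taken in quantities that are a priori known to converge — the scalar $\det\Phi_\epsilon(T_\epsilon)$ (via the divergence hypothesis), the finite-dimensional linear map $DP_\epsilon(z_0)$ (via the $C^1$-up-to-$\epsilon=0$ hypothesis), the endpoint velocity vectors $h_\epsilon$, and the basis-change ratio $\det W_\epsilon/\det V_\epsilon$. The only remaining bookkeeping is to justify that $\tau_\epsilon$ is genuinely $C^1$ near $z_0$ for small $\epsilon$, which follows from the transversality of $\Sigma$ and the implicit function theorem, and to record that the choices of $\Sigma$ and of the $v_j$ are irrelevant to $\det DP_0(z_0)$ once $P_0(z_0)=z_0$.
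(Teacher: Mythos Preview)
Your proof is correct and follows essentially the same route as the paper: Liouville's formula for $\det\Phi_\epsilon(T_\epsilon)$, propagation of the velocity vector $h_\epsilon(\zeta_\epsilon)$ by the variational flow, a block-triangular description of $\Phi_\epsilon(T_\epsilon)$ relating its determinant to $\det DP_\epsilon(z_0)$, and passage to the limit only in quantities already known to converge. The one cosmetic difference is that the paper works in a single fixed basis $\{h_0(z_0),e_2,\dots,e_N\}$ and shows the first column of $\Phi_\epsilon(T_\epsilon)$ is $(1+O(\epsilon),O(\epsilon))^T$, whereas you use $\epsilon$-dependent source and target bases $(v_j),(w_j)$ to get an exactly block-triangular matrix together with the basis-change factor $\det W_\epsilon/\det V_\epsilon\to1$; these are equivalent bookkeeping choices, and if anything your version sidesteps having to argue separately that the off-diagonal block ``$*$'' in the paper's decomposition is controlled.
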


\begin{proof}
The proof is similar to that of \cite[Theorem 11.4]{Teschl:2012}.
Let $\zeta_\epsilon(t)$, $t\in [0,T_\epsilon]$,
be the solution of \eqref{deq_z} with trajectory $\Gamma_\epsilon$.
Let $\beta$ be an ordered basis 
of $\mathbb R^N$
consisting of the vector $h_0(z_0)$
and $(N-1)$ vectors $e_2,\dots,e_n$ of the tangent space $T_{z_0}\mathcal{I}$.
Let $A_\epsilon(t)=[D{h}(\zeta_\epsilon(t))]_\beta$
be the 
matrix form of the
linearization of ${h}_\epsilon$ along $\Gamma_\epsilon$
with respect to the basis $\beta$.
Let $\Phi(t)$ be the fundamental matrix of \[
  \dot{u}= A_\epsilon(t)u
\] with $\Phi(0)$ equal to the identity matrix.
Then \beq{eq_phi_dp}
  \Phi_\epsilon(T_\epsilon)
  = \begin{pmatrix}
  v_\epsilon & *\\
  w_\epsilon& [DP_\epsilon(u_0)]_{\beta'}
  \end{pmatrix}_{N\times N}.
\] where $\beta'=\{e_2,\dots,e_N\}$.

On the other hand, by Liouville's formula, \[
  \det(\Phi(T_\epsilon))
  = \exp\left(
    \int_0^{T_\epsilon}
    \mathrm{tr}(A_\epsilon(t))
    \;dt
  \right)
  \det(\Phi(0)).
\]
Since $\mathrm{tr}(A_\epsilon)= \mathrm{div}(h_\epsilon)$ and $\det(\Phi(0))=\det(I)=1$,
it follows that \beq{eq_phi_div}
  \det(\Phi(T_\epsilon))
  = \exp\left(
    \int_{0}^{T_\epsilon}
      \mathrm{div}(h_\epsilon(\zeta_\epsilon(t))
    \;dt
  \right).
\]
To prove \eqref{eq_detDP},
by \eqref{eq_phi_dp} and \eqref{eq_phi_div},
it suffices to show that \beq{claim1}
  v_\epsilon= 1+O(\epsilon)
  \quad\text{and}\quad
  w_\epsilon= O(\epsilon).
\] 
Let \[
  \rho_\epsilon(t)= h_\epsilon(\zeta(t)).
\] 
Then \[
  \Phi_\epsilon(t) \big(h_\epsilon(z_0)\big)
  =\Phi_\epsilon(t) \big(\rho_\epsilon(0)\big)
  = \rho_\epsilon(t)
  \quad\forall\;t\in[0,T_\epsilon].
\] 
In particular, \beq{phi_eps_rho}
  \Phi_\epsilon(T_\epsilon) \big(h(z_0)\big)
  = \rho_\epsilon(T_\epsilon).
\]
Since $\Gamma_\epsilon$ is in a $O(\epsilon)$-neighborhood of 
the closed configuration $\Gamma_0$
and $P_\epsilon\to P_0$ as $\epsilon\to 0$,
we have \beq{rho_eps_h}
  \rho_\epsilon(T_\epsilon)
  =h_\epsilon(P_\epsilon(z_0))
  =h_0(z_0)+O(\epsilon).
\]
From \eqref{phi_eps_rho} and \eqref{rho_eps_h} it follows that \[
  \Phi_\epsilon(T_\epsilon) \big(h(z_0)\big)
  = h_0(z_0)+O(\epsilon).
\]
Since $h_0(z_0)$ is the first element in the ordered basis $\beta$,
we obtain \eqref{claim1}.
This completes the proof.
\end{proof}

\begin{figure}[t]
\begin{center}
\begin{tabular}{cc}
\frame
{\includegraphics[trim = 2.5cm 1cm 1cm .5cm, clip, width=.46\textwidth]{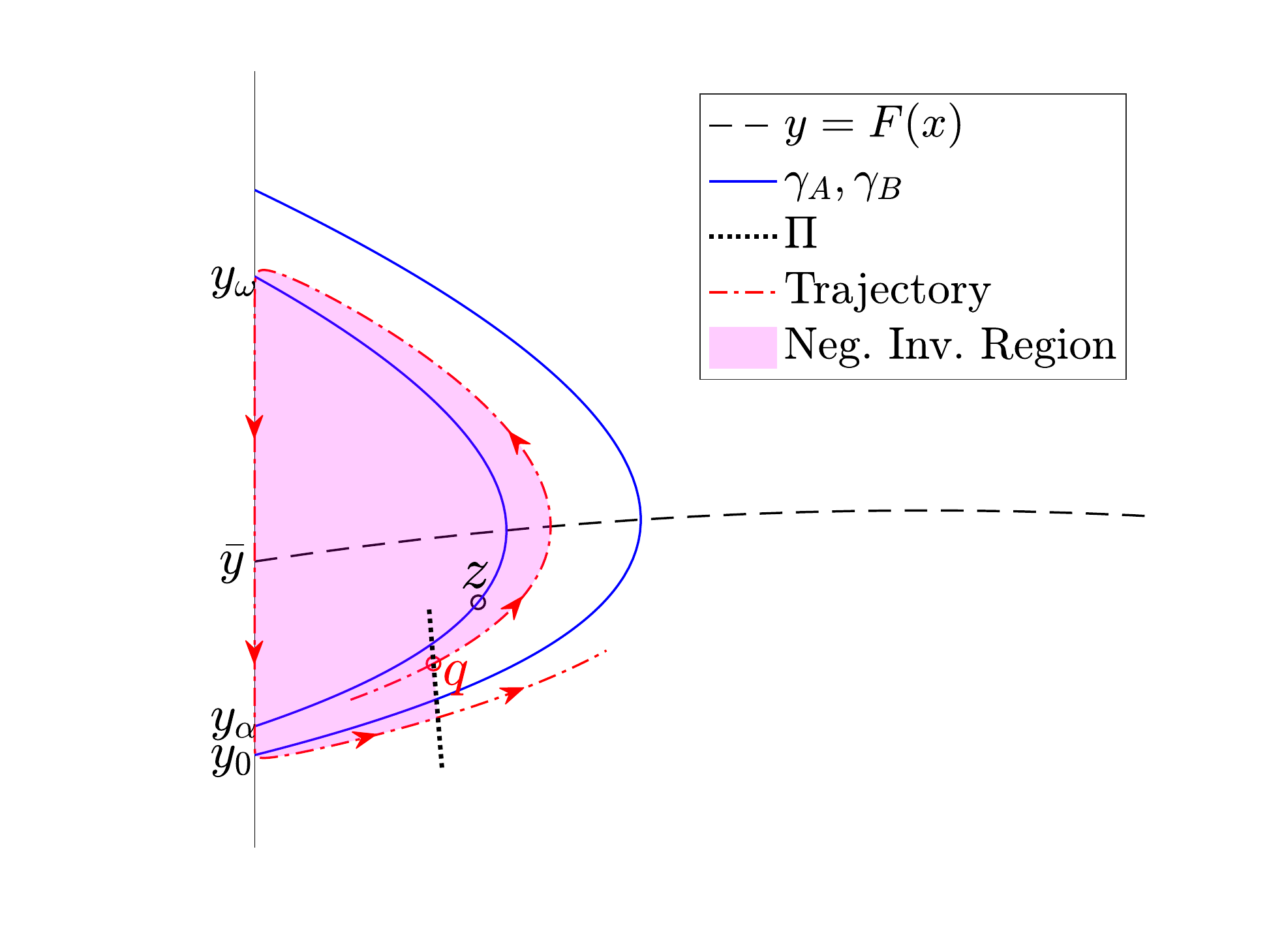}}
&
\frame
{\includegraphics[trim = 2.5cm 1cm 1cm .5cm, clip, width=.46\textwidth]{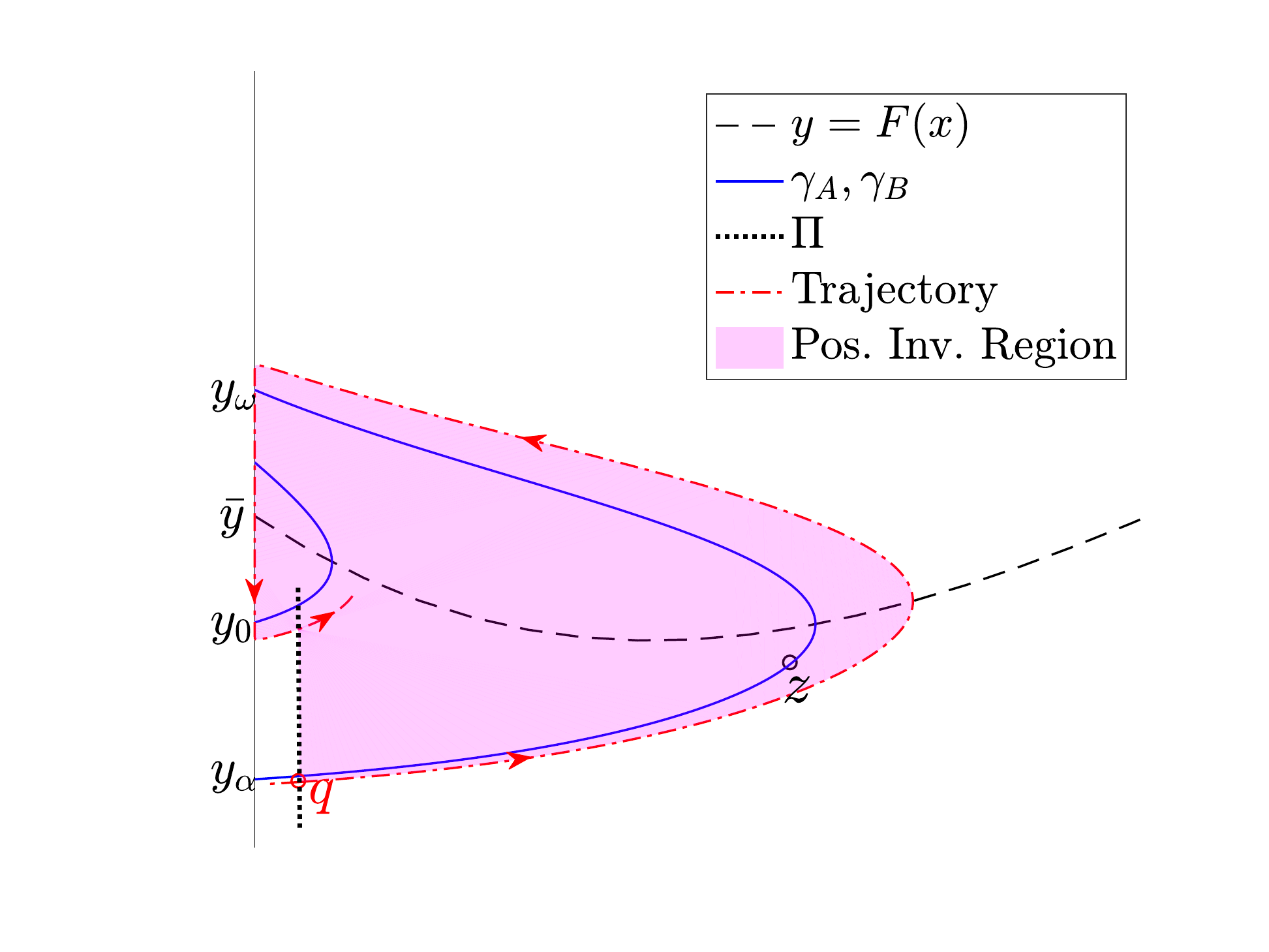}}
\\
(A)
&
(B)
\end{tabular}
\end{center}
\caption{
(A)
$H(y_\omega)>H(y_\alpha)$.
The shaded region is negatively invariant.
(B)
$H(y_\omega)<H(y_\alpha)$.
The shaded region is positively invariant.
In either case, 
since the region contain a proper subset of the trajectory passing through $z$,
a neighborhood of $z$ does not intersect any periodic orbit.
}
\label{fig_trapping}
\end{figure}

Now we are ready to prove our main criteria.

\begin{proof}[Proof of Theorem \ref*{thm_y0}]
Suppose a point $z$ lies
in the interior of $\gamma(x_0)$
for some $x_0\in (0,K)$ with $\chi(x_0)\ne 0$.
Then the alpha-limit point $(0,y_\alpha)=(0,y_\alpha(x_0))$
and omega-limit point $(0,y_\omega)=(0,y_\omega(x_0))$
satisfy $H(y_\omega)\ne H(y_\alpha)$.

In the case that $H(y_\omega)>H(y_\alpha)$,
by the monotonicity of $H(y)$ on the interval $(0,\bar{y})$ (see Figure \ref{fig_H2_HLog}),
there is a unique number $y_0<y_\alpha$ such that $H(y_0)=H(y_\omega)$.
By Theorem \ref{thm_entryexit2},
a trajectory of \eqref{deq_xy}
that enters the vicinity of the $y$-axis near the point $(0,y_\omega)$
must leave near the point $(0,y_0)$.
Let $\gamma_A$ be the trajectory of \eqref{fast_xy} containing $z$,
and $\gamma_B$ the trajectory of \eqref{fast_xy} with alpha-limit point $(0,y_0)$.
Let $\Pi$ be a common cross section of $\gamma_A$ and $\gamma_B$.
Choose a point $q\in \Pi$ between $\gamma_A$ and $\gamma_B$
(see Figure \ref{fig_trapping}{(A)}).
For each sufficiently small $\epsilon$,
using a portion of the forward trajectory of \eqref{deq_xy} from $q$
and a portion the cross section $\Pi$
one can construct a negatively invariant region
that contains a proper subset of the trajectory passing through $z$.
Hence there is a neighborhood of $z$ that does not intersect any periodic orbit.
The case with $H(y_\omega)<H(y_\alpha)$ can be treated similarly,
and is illustrated in Figure \ref{fig_trapping}{(B)}.

Next we fix any $x_0\in (0,K)$ that satisfies $\chi(x_0)=0$.
Then $y_\omega(x_0)$ and $y_\alpha(x_0)$
satisfies the entry-exit relation \eqref{entryexit_ab} for \eqref{deq_xy}
with $(y,x)$ playing the role of $(a,b)$ in \eqref{sf_ab2}.
Choose $\delta_1>0$ and $\delta_2>0$ small enough
so that the conclusion of Theorem \ref{thm_entryexit2}
holds for \eqref{deq_xy}
with $y_\omega(x_0)$ and $y_\alpha(x_0)$
playing the roles of $a_0$ and $a_1$, respectively.
Let \[
  &z^\din=
  (\delta_1,y^\din)
  = \gamma(x_0)\cap \{x=\delta_1,y>\bar{y}\},
  \\
  &z^\dout=
  (\delta_1,y^\dout)
  = \gamma(x_0)\cap \{x=\delta_1,y<\bar{y}\},
\] and \[
  \Sigma^\din
  = \{(\delta_1,y): |y-y^\din|<\delta_2\},\quad
  \Sigma^\dout
  = \{(\delta_1,y): |y-y^\dout|<y^\dout/2\}.
\]
From Theorem \ref{thm_entryexit2}
there are $\epsilon_0>0$ and a $C^1$ function \[
  \pi_\epsilon^{(1)}: \Sigma^\din\times [0,\epsilon_0]
  \to \Sigma^\dout
\] such that
$\pi_\epsilon^{(1)}$ is the transition map from $\Sigma^\din$ to $\Sigma^\dout$
for each $\epsilon\in (0,\epsilon_0]$.

Note that $z^\din\in \gamma_1$ and $z^\dout\in \gamma_2$
both lie on the same trajectory of \eqref{fast_xy}.
Since \eqref{fast_xy} is a regular perturbation of \eqref{deq_xy}
and the segment $\Sigma^\dout$ is compact,
the transition map from $\Sigma^\dout$
to a segment $\Pi\subset \{(\delta_1,y): y>\bar{y}\}$ containing $\Sigma^\din$
is well-defined and smooth for all sufficiently small $\epsilon\ge 0$.
That is, by decreasing $\epsilon_0$ if necessary,
there is a $C^1$ function \[
  \pi_\epsilon^{(2)}:
  \Sigma^\dout\times [0,\epsilon_0]
  \to 
  L
\] such that
$\pi_\epsilon^{(2)}$ is the transition map from $\Sigma^\dout$ to $\Pi$
for all $\epsilon\in [0,\epsilon_0]$.

Let \[
  \pi_\epsilon=\pi_\epsilon^{(2)}\circ \pi_\epsilon^{(1)}:
  \Sigma^\din\to \Pi,
  \quad \epsilon\in [0,\epsilon_0]
\]
and \[
  \Sigma^\din_{(1)}
  = \{z\in \Sigma^\din: \pi_\epsilon(z)\in \Sigma^\din\;\;\forall\;\epsilon\in [0,\epsilon_0]\}.
\] As a pre-image of a compact interval,
 $\Sigma^\din$ is a compact interval containing $z^\din$.
Let $P_\epsilon$ 
be the restriction of $\pi_\epsilon$ on $\Sigma^\din_{(1)}$.
Now we have constructed a $C^1$ function \[
  P_\epsilon: \Sigma^\din_{(1)}\times [0,\epsilon_0]
  \to \Sigma^\din
\] such that
$P_\epsilon$ is the return from $\Sigma^\din_{(1)}\subset \Sigma^\din$ to $\Sigma^\din$
for all $\epsilon\in (0,\epsilon_0]$.
See Figure \ref{fig_return} for a visualization of these segments.

Let $\Gamma_\epsilon$ be a trajectory of \eqref{deq_xy}
that starts at $z^\din$ and ends at $P_\epsilon(z^\dout)\in \Sigma^\din$.
Denote the solution of \eqref{deq_xy}
with the trajectory $\Gamma_\epsilon$ by $(x(t),y(t))$, $t\in [0,T_\epsilon]$.
Then \[
  &\int_0^{T_\epsilon}
  \mathrm{div}\begin{pmatrix}
    p(x)\big(F(x)-y\big)\\
    y(-\epsilon+ cp(x))
  \end{pmatrix}\;dt
  \\[.5em]
  &=\int_0^{T_\epsilon} p'(x)\big(F(x)-y\big)+ p(x)F'(x)-\epsilon+ cp(x)\; dt\\[.5em]
  &=\int_{\Gamma_\epsilon} \frac{p'(x)}{p(x)}\;dx
  + \int_{\Gamma_\epsilon}\frac{p(x)F'(x)}{y(-\epsilon+cp(x))}\;dy
  + \int_{\Gamma_\epsilon}\frac{1}{y}\;dy
  \\[.5em]
  &=
  \log(p(x))\Big|_{x=x(0)}^{x(T_\epsilon)}
  +\int_{\Gamma_\epsilon}\frac{p(x)F'(x)}{y(-\epsilon+cp(x))}\;dy
  + \log(y)\Big|_{y=y(0)}^{y(T_\epsilon)}.
\]
By the continuity of $P_\epsilon$ at $\epsilon=0$,
$\Gamma_\epsilon(x_0)\to \Gamma(x_0)$
uniformly as $\epsilon\to 0$
on any compact subset of $\gamma(x_0)$,
so the first and the third terms in the last expression tends to $0$,
and \eqref{ineq_bint_Delta} in Theorem \ref{thm_entryexit2} implies that \[
  \int_{\Gamma_\epsilon}\frac{p(x)F'(x)}{y(-\epsilon+cp(x))}\;dy
  =\int_{\gamma(x_0)}\frac{F'(x)}{cy}\;dy+O(\epsilon).
\]
The last integral equals $\lambda(x_0)/c$
by the definition of $\lambda(x_0)$ in \eqref{def_lambda}.
Hence \beq{divF_lambda}
  \int_0^{T_\epsilon}
  \mathrm{div}\begin{pmatrix}
    p(x)\big(F(x)-y\big)\\
    y(-\epsilon+ cp(x))
  \end{pmatrix}\;dt
  =\frac{\lambda(x_0)}{c}+O(\epsilon).
\]
Since $\Sigma^\din$ is one-dimensional,
from \eqref{divF_lambda} and Theorem \ref{thm_floquet} it follows that \beq{eq_dp0_exp}
  DP_0(z^\din)
  = \exp(\lambda(x_0)/c).
\]
If $\lambda(x_0)\ne 0$,
then $DP_0(z^\din)\ne 1$.
Since $P_0(z^\din)=z^\din$,
by the implicit function theorem,
there exists $z^\din_\epsilon\in \Sigma^\din_{(1)}$ 
for all sufficiently small $\epsilon>0$
satisfying $P_\epsilon(z^\din_\epsilon)=z^\din_\epsilon$.
This means there is a periodic orbit $\ell_\epsilon$ passing through $z^\din_\epsilon$.
By the $C^1$ continuity of $P_\epsilon$,
the value of $\log(DP_\epsilon(z^\din_\epsilon))$ has the same sign
as $\log(DP_0(z^\din))$ for all sufficiently small $\epsilon>0$.
Therefore, by \eqref{eq_dp0_exp}
and standard Floquet theory,
$\ell_\epsilon$ is orbitally locally asymptotically stable if $\lambda(x_0)<0$,
and is orbitally unstable if $\lambda(x_0)>0$.

Since the time span from $\Sigma^{\dout}$ to $\Pi$
is uniformly bounded for $\epsilon\in [0,\epsilon_0]$,
by the equation $\dot{y}=\epsilon y+ yp(x)$
and the estimate \eqref{est_Teps_ab2} in Theorem \ref{thm_entryexit2}, \[
  T_\epsilon= \frac{1}{\epsilon}\left(
    \int_{y_\alpha(x_0)}^{y_\omega(x_0)} \frac{1}{y}\;dy
    + o(1)
  \right),
\]
which gives \eqref{est_Teps}.
\end{proof}

\begin{figure}[t]
\centering
\begin{tabular}{cc}
\frame
{\includegraphics[trim = 1.2cm .5cm .7cm 0cm, clip, width=.44\textwidth]{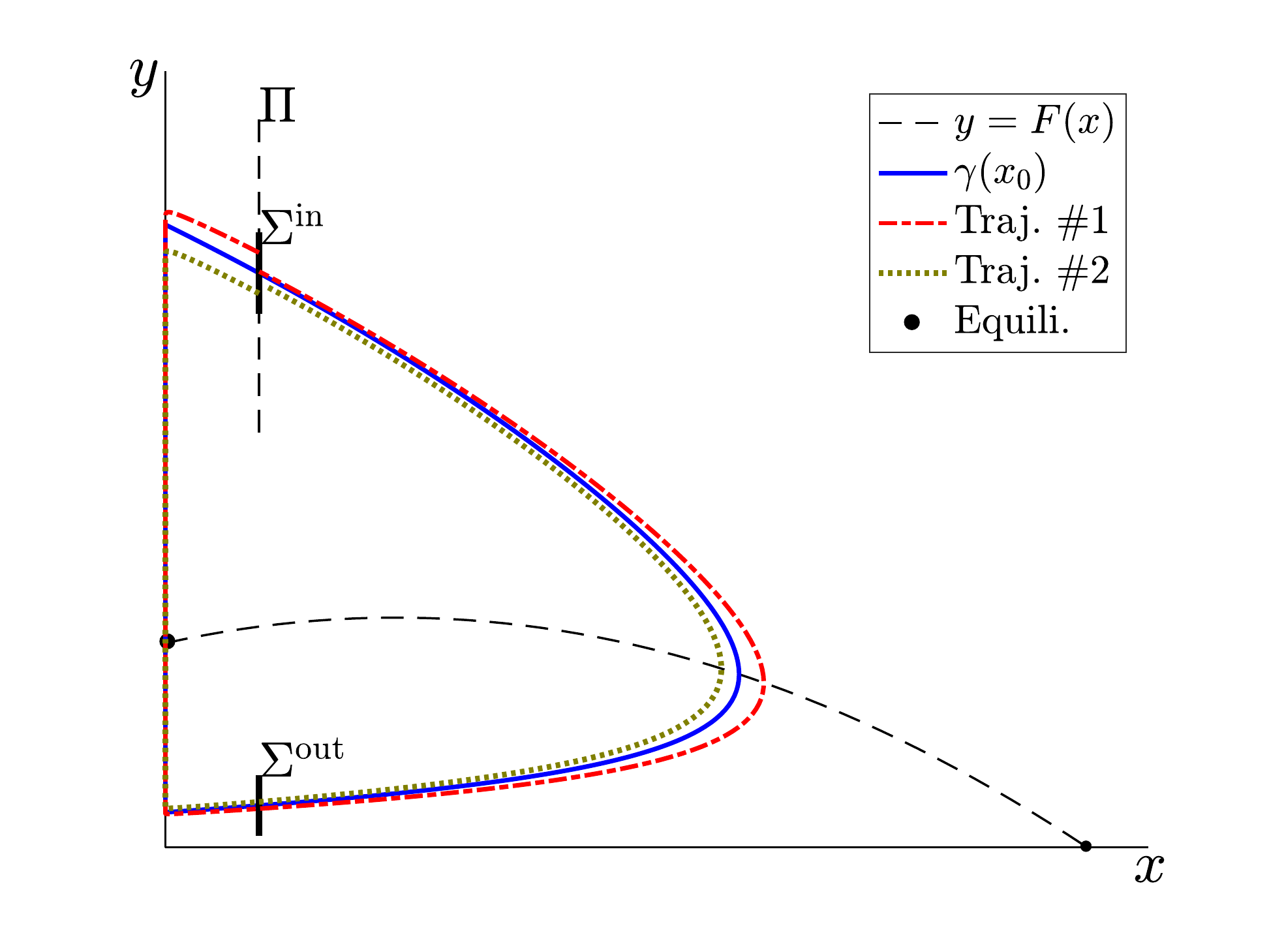}}
&
\frame
{\includegraphics[trim = 1.2cm .5cm .7cm 0cm, clip, width=.44\textwidth]{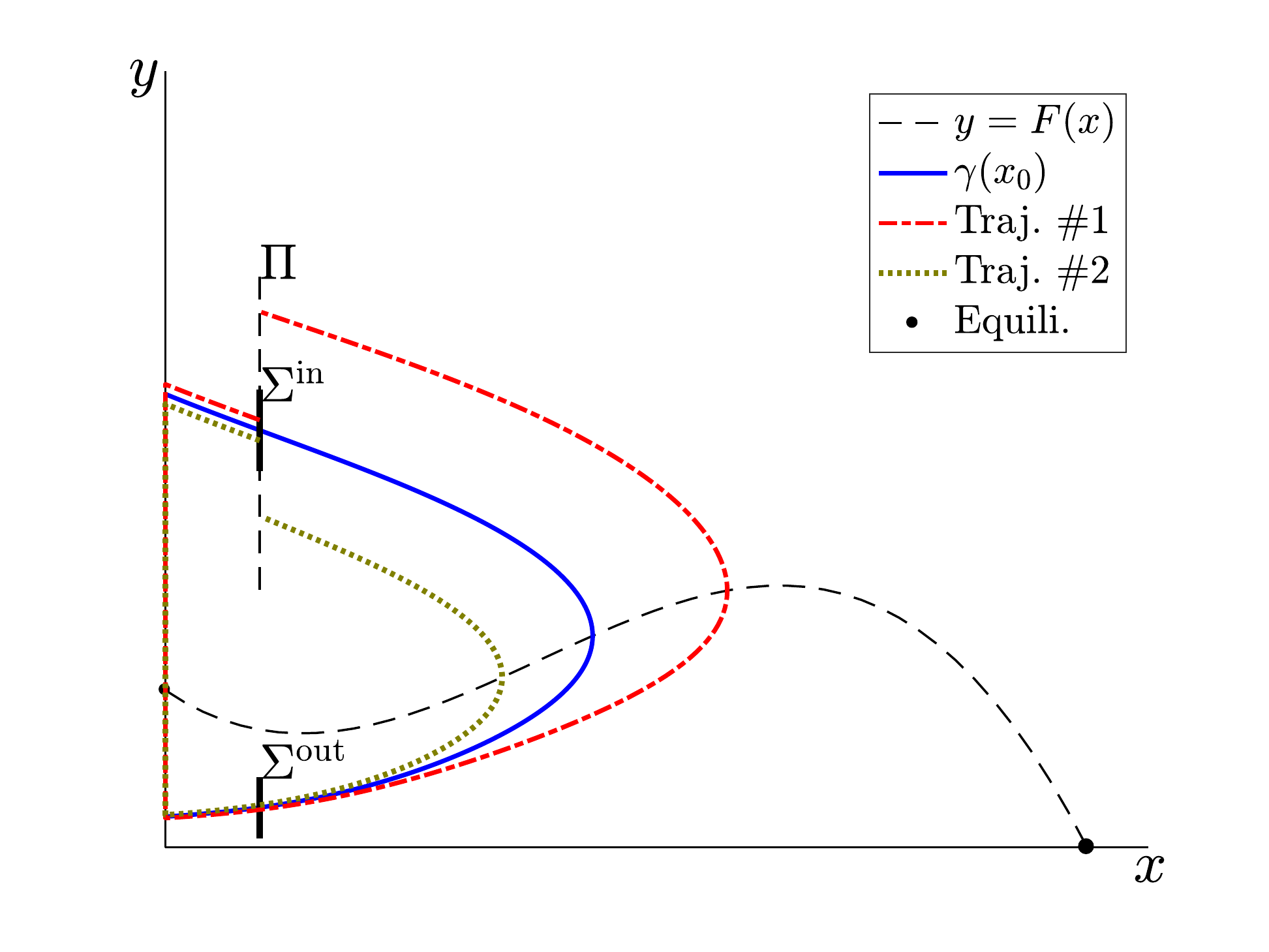}}
\\
(A)
&
(B)
\end{tabular}
\caption{
$\Sigma^\din$
is a cross section of $\Gamma(x_0)$
with $\chi(x_0)=0$, $\lambda(x_0)\ne 0$.
The return map $P_\epsilon:\Sigma^\din_{(1)}\to \Sigma^\din$
is obtained by restricting
the transition map form $\Sigma^{\din}$ to $\Pi$.
(A) $\lambda(x_0)<0$ corresponds to $|\det DP_\epsilon|<1$.
(B) $\lambda(x_0)>0$ corresponds to $|\det DP_\epsilon|>1$.
}
\label{fig_return}
\end{figure}

\section{Discussion}
\label{sec_discussion}

In this paper we considered
the classical predator-prey model
with various functional responses $p(x)$.
Using geometric singular perturbation theory
and Floquet theory,
in Theorem \ref{thm_y0}
we derived characteristic functions $\chi(x)$ and $\lambda(x)$, $x\in (0,K)$,
that depend on the trajectory $\gamma(x)$
for the limiting system with $\epsilon=0$,
such that
\[\text{\begin{tabular}{l}
  (i)\; $\chi(x_0)\ne 0$
  \;$\Rightarrow$\;
  no periodic orbit passes near $\gamma(x_0)$ for any $0<\epsilon\ll 1$.
  \\[1em]
  (ii) $\chi(x_0)=0$ and $\lambda(x_0)\ne 0$
  \;$\Rightarrow$\;
  \text{$\gamma(x_0)$ admits a relaxation oscillation as $\epsilon\to 0$.}
\end{tabular}}\]
In the latter case, the sign of $\lambda(x_0)$
determines the stability of the limit cycles.

With these criteria, 
in Theorem \ref{thm_1hump}
we demonstrated that
if the function $F(x)=rx(1-x/K)/p(x)$
has a single interior extremum,
then the system has a unique limit cycle $\ell_\epsilon$
when the death rate $\epsilon>0$ is sufficiently small,
and $\ell_\epsilon$ forms a relaxation oscillation.
A broad class of response functions,
including 
$p(x)=m\log(1+ax)$
and $p(x)=mx/(a+x)$,
satisfy this condition.

When $F(x)$ has two interior local extrema,
assuming the yield rate $c>0$ is sufficiently small
and $F''(x)<0$ on the right of the interior local maximum,
in Proposition \ref{prop_2humps}
we derived a criterion
depending on the values of the local minimum and local maximum of $F(x)$
to conclude that
either the system has no limit cycle
or has exactly two limit cycles
for all sufficiently small $\epsilon>0$.
In particular,
for the Holling type IV functional response $p(x)=mx/(ax^2+1)$,
in Example \ref{ex_H4}
we derived a threshold $K_*$
such that, for all sufficiently small $\epsilon$,
the system has no limit cycle if $K<K_*$,
and the system has exactly two limit cycles if $K>K_*$.
The inner limit cycle is orbitally unstable,
and the outer one is orbitally locally asymptotically stable.
The prey has low population of order $\exp(-1/\epsilon)$
on the outer limit cycle
for a long timespan of order $1/\epsilon$.
This result supports the paradox of enrichment,
which says that
larger carrying capacity $K$
may lead to destabilization of the coexistence equilibrium
and endanger one or both populations.

\section*{Acknowledgements}
The author
thanks Prof.\ Gail~S.~K.~Wolkowicz
for her many helpful comments on this work,
especially the suggestion on tackling the case of two local extrema.
The author would also like to thank Prof.\ Huaiping Zhu
for his insightful introduction to this problem,
and the anonymous referees
for their careful reading and constructive comments.


\end{document}